\newcommand{\schff}{X}
\newcommand{\adicff}{\mathcal{X}}
\newcommand{\Spa}{\mathrm{Spa}}
\newcommand{\Spd}{\mathrm{Spd}}
\newcommand{\HN}{\mathrm{HN}}
\newcommand{\Surj}{\mathcal{S}\mathrm{urj}}
\newcommand{\Inj}{\mathcal{I}\mathrm{nj}}
\newcommand{\Hom}{\mathcal{H}\mathrm{om}}
\newcommand{\rk}{\mathrm{rk}}
\newcommand{\inj}{\hookrightarrow}
\newcommand{\surj}{\twoheadrightarrow}
\newcommand{\nonneg}{{\geq 0}}
\newcommand{\mumax}{\mu_\text{max}}
\newcommand{\mumin}{\mu_\text{min}}
\newcommand{\trivbundle}{\Ocal}
\newcommand{\vertstretch}{\tilde}
\newcommand{\rankred}{\breve}
\newcommand{\maxslopered}{\overline}
\newcommand{\nocommonslopered}{\acute}
\newcommand{\uniformizer}{\pi}
\newcommand{\pseudounif}{\varpi}
\newcommand{\finext}{E}
\newcommand{\algclosedperfdfield}{F}
\newcommand{\genperfdring}{R}
\newcommand{\integerring}[1]{{#1}^\circ}
\newcommand{\witt}{W}
\newcommand{\Perfd}{\mathrm{Perf}}
\numberwithin{equation}{section}
\newcommand{\F}{\mathbb{F}}
\newcommand{\Q}{\mathbb{Q}}
\newcommand{\Z}{\mathbb{Z}}
\DeclareMathOperator{\rank}{rank}
\DeclareMathOperator{\Proj}{Proj\,}
\DeclareMathOperator{\dR}{dR}
\DeclareFontFamily{OT1}{rsfs}{}
\DeclareFontShape{OT1}{rsfs}{n}{it}{<-> rsfs10}{}
\DeclareMathAlphabet{\mathscr}{OT1}{rsfs}{n}{it}
\newcommand{\Dcal}{\mathcal{D}}
\newcommand{\Ecal}{\mathcal{E}}
\newcommand{\Fcal}{\mathcal{F}}
\newcommand{\Kcal}{\mathcal{K}}
\newcommand{\Mcal}{\mathcal{M}}
\newcommand{\Ocal}{\mathcal{O}}
\newcommand{\Qcal}{\mathcal{Q}}
\newcommand{\Rcal}{\mathcal{R}}
\newcommand{\Scal}{\mathcal{S}}
\newcommand{\Tcal}{\mathcal{T}}
\newcommand{\Ucal}{\mathcal{U}}
\newcommand{\Vcal}{\mathcal{V}}
\newcommand{\Wcal}{\mathcal{W}}
\newcommand{\Ycal}{\mathcal{Y}}
\newcommand{\customlabel}[2]{#2\def\@currentlabel{#2}\label{#1}}
\newtheorem{lemma}[subsubsection]{Lemma}
\newtheorem{prop}[subsubsection]{Proposition}
\theoremstyle{remark}
\newtheorem*{remark}{Remark}
\newtheorem{defn}[subsubsection]{Definition}
\newtheorem{example}[subsubsection]{Example}
\newtheorem*{thm*}{Theorem}
\def\th@remark{%
  \thm@headfont{\bfseries}%
  \normalfont 
}
\def\imod#1{\allowbreak\mkern5mu({\operator@font mod}\,\,#1)}
\theoremstyle{theorem}
\newtheorem{theorem}[subsubsection]{Theorem}
\numberwithin{equation}{section}
\begin{document}
	
	\tikzset{
		node style sp/.style={draw,circle,minimum size=\myunit},
		node style ge/.style={circle,minimum size=\myunit},
		arrow style mul/.style={draw,sloped,midway,fill=white},
		arrow style plus/.style={midway,sloped,fill=white},
	}
    
	\title{Classification of subbundles on the Fargues-Fontaine curve}
   
    \author[S. Hong]{Serin Hong}
    \address{Department of Mathematics, University of Michigan, 530 Church Street, Ann Arbor MI 48109}
    \email{serinh@umich.edu}
    
    \begin{abstract} We completely classify all subbundles of a given vector bundle on the Fargues-Fontaine curve. Our classification is given in terms of a simple and explicit condition on Harder-Narasimhan polygons. Our proof is inspired by the proof of the main theorem in \cite{Hong_quotvb}, but also involves a number of nontrivial adjustments. 

    \end{abstract}
	
	\maketitle

	\tableofcontents
	
	\rhead{}

	\chead{}
\section{Introduction}

\subsection{Motivation and background}\label{motivation}$ $

The \emph{Fargues-Fontaine curve} is a regular Noetherian scheme of Krull dimension $1$ which is constructed by Fargues-Fontaine \cite{FF_curve} as the fundamental curve of $p$-adic Hodge theory. 
Powered by the theory of perfectoid spaces and diamonds as developed by Scholze in \cite{Scholze_perfectoid} and \cite{Scholze_diamonds}, the theory of vector bundles on the Fargues-Fontaine curve has driven a number of spectacular discoveries in arithmetic geometry and $p$-adic Hodge theory. Notable examples include the geometrization of the local Langlands correspondence by Fargues \cite{Fargues_geomLL} and the construction of general local Shimura varieties by Scholze \cite{SW_berkeley}. 


One of the most fundamental results about vector bundles on the Fargues-Fontaine curve is that
they form a slope category which admits a complete classification by Harder-Narasimhan (HN) polygons, as we briefly recall below.

\begin{theorem}[Fargues-Fontaine \cite{FF_curve}, Kedlaya \cite{Kedlaya_slopefiltrations}] \label{classification of vector bundles on FF curve, intro} Fix a prime number $p$. Let $\finext$ be a finite extension of $\Q_p$, and let $\algclosedperfdfield$ be an algebraically closed perfectoid field of characteristic $p$. Denote by $\schff = \schff_{\finext, \algclosedperfdfield}$ the Fargues-Fontaine curve associated to the pair $(\finext, \algclosedperfdfield)$. 

\begin{enumerate}[label=(\arabic*)]
\item The category of vector bundles on $\schff$ admits a well-defined notion of slope. 
\smallskip

\item For every rational number $\lambda$, there is a unique stable bundle of slope $\lambda$ on $\schff$, denoted by $\trivbundle(\lambda)$. 
\smallskip

\item Every semistable bundle on $\schff$ of slope $\lambda$ is of the form $\trivbundle(\lambda)^{\oplus m}$. 
\smallskip

\item\label{HN decomp of vector bundles, intro} Every vector bundle $\Vcal$ on $\schff$ admits a (necessarily unique) Harder-Narasimhan decomposition
\[
\Vcal \simeq \bigoplus_i \trivbundle(\lambda_i)^{\oplus m_i}.
\]
In other words, the isomorphism class of $\Vcal$ is determined by the Harder-Narasimhan polygon $\HN(\Vcal)$ of $\Vcal$.
\end{enumerate}
\end{theorem}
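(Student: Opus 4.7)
The plan is the Fargues-Fontaine/Kedlaya two-step strategy: first reduce the entire classification to the single assertion that every semistable bundle of slope $0$ on $\schff$ is trivial, and then prove that assertion by a Banach fixed-point argument for $\varphi$-modules. For (1), I would use that $\schff$ is a regular Noetherian scheme of Krull dimension $1$ with $\Pic(\schff) \simeq \Z$, so degree is well defined and additive on short exact sequences; the slope $\mu(\Vcal) = \deg(\Vcal)/\rk(\Vcal)$ then triggers the standard Harder-Narasimhan formalism, giving existence and uniqueness of a semistable filtration with strictly decreasing slopes. For (2), I would construct $\trivbundle(\lambda)$ for $\lambda = d/r$ in lowest terms with $r > 0$ as $\pi_{r,*}\, \Ocal_{\schff_{\finext_r, \algclosedperfdfield}}(d)$, where $\pi_r \colon \schff_{\finext_r, \algclosedperfdfield} \to \schff$ is the degree-$r$ finite \'etale cover induced by the unramified extension $\finext_r/\finext$ of degree $r$, and $\Ocal(d) := \Ocal(d[x_\infty])$ for a chosen closed point $x_\infty$. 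A direct computation gives $\rk = r$ and $\deg = d$, hence $\mu(\trivbundle(\lambda)) = \lambda$.

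I would then reduce (3) to the slope-zero case. Given a semistable $\Vcal$ of slope $\lambda = d/r$ on $\schff$, pulling back along $\pi_r$ and twisting down by $\Ocal_{\schff_{\finext_r, \algclosedperfdfield}}(-d)$ produces a semistable slope-zero bundle on $\schff_{\finext_r, \algclosedperfdfield}$. Assuming triviality in the slope-zero case, a $\Gal(\finext_r/\finext)$-descent along $\pi_r$ recovers $\Vcal \simeq \trivbundle(\lambda)^{\oplus m}$, and the same argument simultaneously yields both the stability and the uniqueness of $\trivbundle(\lambda)$.

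The crux is then Kedlaya's slope-filtration theorem: a semistable slope-zero $\Vcal$ admits a nonzero map $\trivbundle \hookrightarrow \Vcal$. I would pass to the equivalent category of finite projective $\varphi$-modules over the appropriate period ring (the extended Robba ring associated with $(\finext, \algclosedperfdfield)$), under which global sections correspond to $\varphi$-fixed vectors. Dieudonn\'e-Manin applied over a residue ring produces an approximate $\varphi$-fixed vector; the absence of positive-slope subobjects, forced by semistability at slope $0$, translates into a contractivity estimate for $\varphi - 1$ on a suitable Banach completion of the $\varphi$-module, and a successive approximation then upgrades the approximate fixed vector to an honest one. Splitting off the resulting $\trivbundle \hookrightarrow \Vcal$ and iterating on the rank-$(m-1)$ quotient gives $\Vcal \simeq \trivbundle^{\oplus m}$. \textbf{This fixed-point/approximation step is the main obstacle}: it is the only place where the special analytic structure of the period rings is essential, and nothing analogous holds on a classical projective curve.

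Finally, for (4) I would induct on the rank. Given the HN filtration $0 = \Vcal_0 \subsetneq \Vcal_1 \subsetneq \cdots \subsetneq \Vcal_n = \Vcal$, the maximal-slope piece $\Vcal_1$ is semistable and hence by (3) isomorphic to $\trivbundle(\lambda_1)^{\oplus m_1}$; by induction $\Vcal/\Vcal_1 \simeq \bigoplus_{i \geq 2} \trivbundle(\lambda_i)^{\oplus m_i}$ with all $\lambda_i < \lambda_1$. The extension splits because $\Ext^1(\trivbundle(\lambda_i), \trivbundle(\lambda_1)) = 0$ whenever $\lambda_i < \lambda_1$, which reduces via pullback to a common finite \'etale cover to the elementary vanishing of $H^1$ of a positive-degree line bundle on that cover. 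Uniqueness of the HN polygon then follows from uniqueness of the HN filtration together with the uniqueness part of (2), completing the proof.
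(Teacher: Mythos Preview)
The paper does not actually prove this theorem: it is stated in the introduction as a foundational result attributed to Fargues--Fontaine and Kedlaya, and reappears in \S2.2 (Theorem~\ref{existence of HN decomp}) again only with a citation. The only piece the paper establishes is the easy coda of part~(4), namely that the HN polygon determines the isomorphism class (Proposition~\ref{classification by HN polygon}), which is a trivial consequence once the HN decomposition is granted. So there is no ``paper's own proof'' to compare against beyond this.

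Your sketch is a faithful outline of the Fargues--Fontaine/Kedlaya argument from the cited sources, and the identification of the slope-zero semistable case as the analytic crux is exactly right. One minor discrepancy worth noting: the paper constructs $\trivbundle(\lambda)$ not via pushforward along the unramified cover $\pi_r$, but by descending an explicit $\phi$-equivariant rank-$s$ bundle on $\Ycal$ (Definition~\ref{o-r-over-s}); the two constructions are of course equivalent, but if you are aligning with the paper's conventions you should use its version.
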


Theorem \ref{classification of vector bundles on FF curve, intro} naturally leads to a question of classifying all quotient bundles and subbundles of a given vector bundle on the Fargues-Fontaine curve. For quotient bundles, the author in \cite{Hong_quotvb} has obtained a complete classification in terms of HN polygons. 
Our main purpose in this paper is to obtain a complete classification for subbundles.

We remark that the classification problem for subbundles is closely related to the study of \emph{modifications of vector bundles},
which play a pivotal role in studying the geometry of the $B_{\dR}$-affine Grassmannians, the flag varieties, and the Hecke stacks. 
By definition, a modification of vector bundles is an exact sequence of the form
\[ 0 \longrightarrow \Ecal \longrightarrow \Fcal \longrightarrow \Tcal \longrightarrow 0\]
for some vector bundles $\Ecal, \Fcal$ and torsion sheaf $\Tcal$. When $\Fcal$ is fixed, the vector bundles that 
can take place of $\Ecal$ are precisely subbundles of $\Fcal$ with maximal rank; therefore, once we have a complete classification for subbundles of $\Fcal$, we can describe all possible isomorphism classes of $\Ecal$. 



\subsection{Overview of the result}\label{overview}$ $

For a vector bundle $\Vcal$ on $\schff$ and a rational number $\mu$, we define a vector bundle $\Vcal^{\geq \mu}$ by declaring that its HN polygon $\HN(\Vcal^{\geq \mu})$ consists of all line segments in $\HN(\Vcal)$ with slope greater than or equal to $\mu$. In other words, for a vector bundle $\Vcal$ on $\schff$ with HN decomposition
\[\Vcal \simeq \bigoplus_i \trivbundle(\lambda_i)^{\oplus m_i},\]
we set 
\[ \Vcal^{\geq \mu}:= \bigoplus_{\lambda_i \geq \mu} \trivbundle(\lambda_i)^{\oplus m_i} \quad\quad\text{ for every } \mu \in \Q.\]
We can state our main result as follows:

\begin{theorem}\label{classification of subbundles, intro}
Let $\Fcal$ be a vector bundle on $\schff$. Then a vector bundle $\Ecal$ on $\schff$ is a subbundle of $\Fcal$ if and only if the following equivalent conditions are satisfied:
\begin{enumerate}[label=(\roman*)]
\item\label{rank inequalities for subbundles, intro} $\rank(\Ecal^{\geq \mu}) \leq \rank(\Fcal^{\geq \mu})$ for every $\mu \in \Q$. 

\smallskip

\item\label{slopewise dominance for subbundles, intro} For each $i = 1, \cdots, \rank(\Ecal)$, the slope of $\HN(\Ecal)$ on the interval $[i-1, i]$ is less than or equal to the slope of $\HN(\Fcal)$ on this interval. 
\end{enumerate}

\begin{figure}[H]
\begin{tikzpicture}[scale=1]

		\coordinate (left) at (0, 0);
		\coordinate (q0) at (1,2.5);
		\coordinate (q1) at (2.5, 4);
		\coordinate (q2) at (5, 5);
		\coordinate (q3) at (8, 4);
		

		\coordinate (p0) at (2, 1.5);
		\coordinate (p1) at (4.5, 2);
		\coordinate (p2) at (6.5, 0.7);
				
		\draw[step=1cm,thick] (left) -- (q0) --  (q1) -- (q2) -- (q3);
		\draw[step=1cm,thick] (left) -- (p0) --  (p1) -- (p2);
		
		\draw [fill] (q0) circle [radius=0.05];		
		\draw [fill] (q1) circle [radius=0.05];		
		\draw [fill] (q2) circle [radius=0.05];		
		\draw [fill] (q3) circle [radius=0.05];		
		\draw [fill] (left) circle [radius=0.05];
		
		\draw [fill] (p0) circle [radius=0.05];		
		\draw [fill] (p1) circle [radius=0.05];		
		\draw [fill] (p2) circle [radius=0.05];		
		
		\draw[step=1cm,dotted] (3, -0.4) -- (3, 5);
       		\draw[step=1cm,dotted] (3.5, -0.4) -- (3.5, 5);

		\node at (2.8,-0.8) {\scriptsize $i-1$};
		\node at (3.5,-0.8) {\scriptsize $i$};
		
		\path (q3) ++(0.8, 0.05) node {$\HN(\Fcal)$};
		\path (p2) ++(0.8, 0.05) node {$\HN(\Ecal)$};
		\path (left) ++(-0.3, -0.05) node {$O$};

\end{tikzpicture}
\caption{Illustration of the condition \ref{slopewise dominance for subbundles} in Theorem \ref{classification of subbundles}.}
\end{figure}
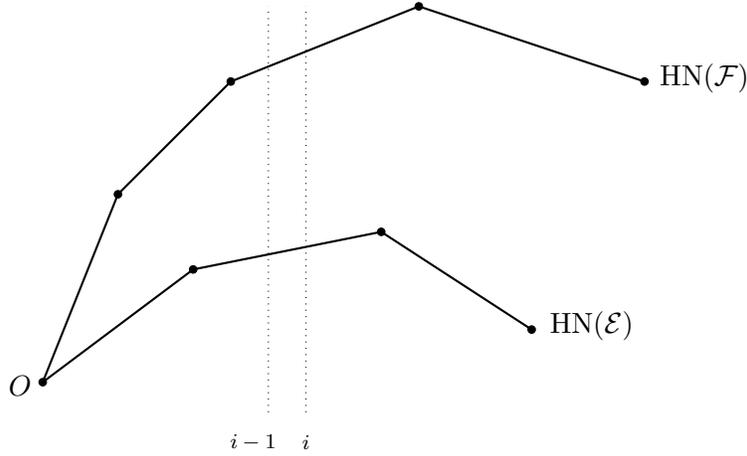
\end{theorem}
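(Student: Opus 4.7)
The plan is to separate the combinatorial equivalence of the two conditions from the geometric content of the classification. Conditions \ref{rank inequalities for subbundles, intro} and \ref{slopewise dominance for subbundles, intro} both encode the same partial order on HN polygons: if we write the HN slopes of $\Ecal$ and $\Fcal$ in decreasing order as $e_1\geq\cdots\geq e_r$ and $f_1\geq\cdots\geq f_n$, then \ref{slopewise dominance for subbundles, intro} is the statement $r\leq n$ together with $e_i\leq f_i$ for every $i\leq r$, while \ref{rank inequalities for subbundles, intro} records, for each threshold $\mu$, how many slopes of each bundle lie above $\mu$. Testing \ref{rank inequalities for subbundles, intro} at values $\mu$ just below each $e_i$ reproduces the slopewise inequalities, and conversely; I would dispose of this equivalence first as a standalone combinatorial lemma about finite multisets.

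For the direction ``$\Ecal$ is a subbundle of $\Fcal$ implies \ref{rank inequalities for subbundles, intro}'', the key input is Hom-vanishing on $\schff$: $\operatorname{Hom}(\Ocal(\alpha),\Ocal(\beta))=0$ whenever $\alpha>\beta$, by Theorem \ref{classification of vector bundles on FF curve, intro}. Given an embedding $\Ecal\hookrightarrow\Fcal$ and any $\mu\in\Q$, the composition $\Ecal^{\geq\mu}\hookrightarrow\Ecal\hookrightarrow\Fcal\twoheadrightarrow\Fcal/\Fcal^{\geq\mu}$ vanishes, since its source has all slopes $\geq\mu$ while its target has all slopes $<\mu$. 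Hence $\Ecal^{\geq\mu}$ factors through $\Fcal^{\geq\mu}$ as a subsheaf, which yields the rank inequality.

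The bulk of the work is the converse: given the HN inequalities, I would construct an injection $\Ecal\hookrightarrow\Fcal$ with locally free cokernel. Following the strategy of \cite{Hong_quotvb}, I would argue by induction on a combination of $\rank(\Fcal)-\rank(\Ecal)$ and the number of distinct HN slopes. The base case handles $\Fcal$ semistable (or, dually, a low-rank $\Ecal$), where one writes down the embedding $\Ocal(\lambda)^r\hookrightarrow\Ocal(\mu)^n$ for $\lambda\leq\mu$ explicitly via elementary modifications. For the inductive step, I would peel off the top HN stratum $\Ocal(\lambda_1)^{m_1}$ of $\Fcal$: if $\lambda_1$ is also the top slope of $\Ecal$ with multiplicity $r_1\leq m_1$, split off a matching $\Ocal(\lambda_1)^{r_1}$ summand of $\Ecal$ and recurse on the residual pair; otherwise, the gap between the top slopes of $\Ecal$ and $\Fcal$ supplies enough room to replace $\Ocal(\lambda_1)^{m_1}$ by a modification with smaller top slope, producing a new $\Fcal$ with one fewer top-slope piece that still dominates $\HN(\Ecal)$ in the sense of \ref{slopewise dominance for subbundles, intro}.

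The hard part, and presumably the site of the ``nontrivial adjustments'' flagged in the abstract, is the inductive step when \ref{slopewise dominance for subbundles, intro} holds with equality along several consecutive indices. In this tight regime there is essentially no freedom in choosing the modification used to peel off the top piece, and a naive choice will violate the subbundle inequalities at a lower slope of the residual pair. I expect to need a careful slope-by-slope accounting analogous to the rank-preserving modifications of \cite{Hong_quotvb}, chosen so that the residual polygons still obey \ref{slopewise dominance for subbundles, intro}. Although one might hope to deduce the theorem directly from the quotient bundle classification by dualizing, a faithful translation of the argument is delicate because the modifications that build injections move HN polygons in the opposite direction from those that build surjections, and this reversal is precisely what forces the technical adjustments alluded to.
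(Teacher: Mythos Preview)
Your treatment of the equivalence of \ref{rank inequalities for subbundles, intro} and \ref{slopewise dominance for subbundles, intro}, and of the necessity direction via Hom-vanishing, matches the paper (Propositions \ref{equivalence of two characterizations for subbundles} and \ref{subbundles necessary condition}). One small slip: you say you will construct an injection ``with locally free cokernel'', but the theorem does not ask for this, and indeed when $\rk(\Ecal)=\rk(\Fcal)$ the cokernel is torsion.

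For sufficiency, however, your route diverges substantially from the paper's. You propose a direct inductive construction: peel off the top HN piece of $\Fcal$, either matching it against a summand of $\Ecal$ or replacing it by a modification with smaller top slope, and recurse. The paper does \emph{not} construct the injection explicitly. Instead it works on the moduli diamond $\Hom(\Ecal,\Fcal)$ and its stratification $|\Hom(\Ecal,\Fcal)| = \coprod_\Qcal |\Hom(\Ecal,\Fcal)_\Qcal|$ by the isomorphism type of the image. Using the dimension formulas of Propositions \ref{moduli of bundle maps fund facts} and \ref{properties of Hom with specified image}, the argument shows that for every $\Qcal\not\simeq\Ecal$ arising as an image one has a strict inequality $\dim|\Hom(\Ecal,\Fcal)_\Qcal| < \dim|\Hom(\Ecal,\Fcal)|$; hence the stratum with $\Qcal\simeq\Ecal$ must be nonempty, and an $\algclosedperfdfield$-point of it is the sought injection. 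The technical core is the degree inequality of Proposition \ref{key inequality}, proved not by modifying $\Fcal$ but by running a ``dually degenerating'' sequence $\Ecal=\Ecal_0,\Ecal_1,\ldots,\Ecal_r\simeq\Qcal$ (maximal slope reductions applied to $\Ecal^\vee$) along which $c_{\Ecal_i,\Fcal}(\Qcal)$ decreases, with strictness forced in the first two steps.

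What each approach buys: your direct construction is more elementary and would avoid the diamond machinery entirely, but the difficulty you yourself flag---the ``tight regime'' where \ref{slopewise dominance for subbundles, intro} holds with equality along a run of indices---is exactly what the paper's method sidesteps. The dimension-counting argument is global: it never needs to track how a single modification moves the residual HN polygon, and the strictness needed to close the argument is isolated in one clean step (Proposition \ref{strict decreasing cEF(Q) in first two steps}) rather than distributed across an induction. Your sketch does not resolve the tight-regime step, and it is not clear that a naive peel-off-and-modify scheme can be made to terminate there without importing something equivalent to the paper's degree inequality.
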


Let us briefly sketch our proof of Theorem \ref{classification of subbundles, intro}, which is largely inspired by the main argument in \cite{Hong_quotvb} for classification of quotient bundles. The necessity part of Theorem \ref{classification of subbundles, intro} is a direct consequence of the slope formalism, while equivalence of the conditions \ref{rank inequalities for subbundles, intro} and \ref{slopewise dominance for subbundles, intro} follows immediately from convexity of HN polygons. Thus the main part of the proof will concern the sufficiency part of Theorem \ref{classification of subbundles, intro}. To this end, we will consider auxiliary moduli spaces $\Hom(\Ecal, \Fcal)_\Qcal$ which (roughly) parametrize bundle maps $\Ecal \to \Fcal$ with image isomorphic to a specified vector bundle $\Qcal$. These spaces are \emph{diamonds} in the sense of Scholze \cite{Scholze_diamonds}, as shown in \cite{Arizona_extvb}. For the assertion that $\Ecal$ is a subbundle of $\Fcal$, it suffices to prove nonemptiness of $\Hom(\Ecal, \Fcal)_\Ecal$. Using the dimension theory for diamonds, we will reduce the desired nonemptiness of $\Hom(\Ecal, \Fcal)_\Ecal$ to a quantitative statement as stated in Proposition \ref{key inequality}. Then we will prove this quantitative statement by 
a certain degenerating process on the dual bundle of $\Ecal$. 

In many parts, our argument will adapt various notions and constructions from \cite{Hong_quotvb}. Most notably, the notion of \emph{slopewise dominance} as defined in \cite{Hong_quotvb} will play a crucial role in both the formulation and the proof of the key quantitative statement, namely Proposition \ref{key inequality}. In addition, our degenerating process on the dual bundle of $\Ecal$ will be almost identical to the degenerating process on $\Fcal$ in the main argument of \cite{Hong_quotvb}.

However, the details of our argument will require several nontrivial adjustments from the argument in \cite{Hong_quotvb}. These adjustments essentially come from the fact that Theorem \ref{classification of subbundles, intro} cannot be deduced by simply dualizing the classification theorem for quotient bundles as obtained in \cite{Hong_quotvb}. 
In our proof, we will try to indicate what adjustments we should make and why such adjustments are necessary.





\subsection*{Acknowledgments} The author would like to thank Adrian Vasiu for a valuable discussion about the problem. The author would also like to thank Tasho Kaletha for his helpful suggestion about the manuscript. 

\newpage

\section{Vector bundles on the Fargues-Fontaine curve}\label{background}

\subsection{The Fargues-Fontaine curve}$ $

Throughout this paper, we fix the following data:
\begin{itemize}
\item $p$ is a prime number;

\item $\finext$ is a finite extension of $\Q_p$ with residue field $\F_q$;

\item $\algclosedperfdfield$ is an algebraically closed perfectoid field of characteristic $p$. 
\end{itemize}
In addition, we denote by $\integerring{\finext}$ and $\integerring{\algclosedperfdfield}$ the rings of integers of $\finext$ and $\algclosedperfdfield$, respectively. We also choose a uniformizer $\uniformizer$ of $\finext$ and a pseudouniformizer $\pseudounif$ of $\algclosedperfdfield$. 

Let $\witt_{\finext^\circ}(\integerring{\algclosedperfdfield}):=\witt(\integerring{\algclosedperfdfield}) \otimes_{\witt(\F_q)} \integerring{\finext}$ be the ring  of ramified Witt vectors of $\integerring{\algclosedperfdfield}$ with coefficients in $\integerring{\finext}$, and let $[\pseudounif]$ be the Teichmuller lift of $\pseudounif$. One can show that 
\[
\Ycal:=\Spa(\witt_{\integerring{\finext}}(\integerring{\algclosedperfdfield}))\setminus\{|p[\pseudounif]|=0\}
\]
is an adic space over $\Spa(\finext)$. Moreover, the natural $q$-Frobenius map on $\witt_{\integerring{\finext}}(\integerring{\algclosedperfdfield})$ induces a properly discontinuous automorphism $\phi$ of $\Ycal$. 
\begin{defn} Given the pair $(\finext, \algclosedperfdfield)$, we define the associated \emph{adic Fargues-Fontaine curve} by
\[
\adicff:=\Ycal/\phi^\Z,
\]
and the associated \emph{schematic Fargues-Fontaine curve} by 
\[\schff:= \Proj \left( \bigoplus_{n \geq 0 } H^0(\Ycal, \trivbundle_\Ycal)^{\phi = \pseudounif^n} \right).\]
\end{defn}

In this paper, we will speak interchangeably about vector bundles on $\adicff$ and $\schff$. There will be no harm from doing this because of the following GAGA type result:
\begin{prop}[``GAGA for the Fargues-Fontaine curve'', {\cite[Theorem 6.3.12]{KL15}}]\label{GAGA for FF curve}
There is a natural map of locally ringed spaces 
\[
\adicff \rightarrow \schff
\]
which induces by pullback an equivalence of categories of vector bundles. 
\end{prop}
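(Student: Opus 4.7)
The plan is to follow the standard GAGA template: first construct a natural comparison map of locally ringed spaces, then prove the equivalence on vector bundles by combining full faithfulness (via global sections of internal Homs) with essential surjectivity (via an explicit classification on each side).

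For the construction of the map, I would use the universal property of $\Proj$. Each point of $\adicff$ lifts to a point of $\Ycal$, i.e., to a rank-one continuous valuation on $\witt_{\integerring{\finext}}(\integerring{\algclosedperfdfield})[\tfrac{1}{p[\pseudounif]}]$; restricting this valuation to the graded subring $P := \bigoplus_{n \geq 0} H^0(\Ycal, \Ocal_\Ycal)^{\phi = \pseudounif^n}$ yields a homogeneous valuation which, thanks to the $\phi$-equivariance built into the definition of $P$, is independent of the chosen lift. This defines a homogeneous prime ideal of $P$, hence a point of $\schff = \Proj(P)$. Checking compatibility of the stalks upgrades the construction to a map of locally ringed spaces, and by construction the tautological $\Ocal_\schff(1)$ pulls back to the descent line bundle $\Ocal_\adicff(1)$ on $\adicff$.

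For full faithfulness of the pullback functor, I would compute $\Hom$ between vector bundles via global sections of their internal Hom. By Theorem \ref{classification of vector bundles on FF curve, intro}\ref{HN decomp of vector bundles, intro}, every internal Hom decomposes as a direct sum of $\Ocal(\lambda)$'s, so it suffices to check that pullback induces an isomorphism $H^0(\schff, \Ocal(\lambda)) \to H^0(\adicff, \Ocal(\lambda))$ for each rational slope $\lambda$. For integer $\lambda$ this is tautological from the $\Proj$ description, as both sides compute the graded piece $P_\lambda$. The general case reduces to the integer case by base change along a finite unramified extension of $\finext$, which on the curve corresponds to a finite étale cover that rescales slopes by an integer factor.

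For essential surjectivity, the crucial input is the analogue of Theorem \ref{classification of vector bundles on FF curve, intro} on the adic side: every vector bundle on $\adicff$ is isomorphic to a direct sum $\bigoplus_i \Ocal_\adicff(\lambda_i)^{\oplus m_i}$, and each $\Ocal_\adicff(\lambda)$ is manifestly the pullback of the schematic $\Ocal_\schff(\lambda)$. Hence every adic bundle lies in the essential image, and combined with full faithfulness this yields the equivalence. The main obstacle is establishing the adic HN classification independently of the schematic one, which rests on Kedlaya's slope filtration theorem for $\phi$-modules over the Robba ring; this is the genuinely hard analytic input underlying the theory, and organizing the argument so that no circularity with the schematic classification creeps in is the most delicate bookkeeping task.
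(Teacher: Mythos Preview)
The paper does not prove this proposition; it is imported as a black box from Kedlaya--Liu \cite[Theorem 6.3.12]{KL15}, so there is no in-paper argument to compare your proposal against. Your outline is a reasonable sketch of how such a GAGA statement is established, and you correctly identify Kedlaya's slope filtration theorem as the hard analytic input underlying essential surjectivity.

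One caution about circularity relative to \emph{this} paper's logical order: in Definition~\ref{o-r-over-s} the bundle $\Ocal(\lambda)$ is constructed on the adic side and then transported to $\schff$ \emph{via} the GAGA functor, and Theorem~\ref{existence of HN decomp} (the classification you invoke for full faithfulness) is stated for the schematic curve in those terms. So within the paper's exposition, the HN classification already presupposes Proposition~\ref{GAGA for FF curve}, and your full-faithfulness step would be circular. In the original sources this is not an issue---Fargues--Fontaine construct $\Ocal(\lambda)$ and prove the classification directly on the schematic curve, and Kedlaya proves the adic analogue independently---but if you want a self-contained argument you must cite those primary constructions rather than Definition~\ref{o-r-over-s} and Theorem~\ref{existence of HN decomp} as presented here. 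You flag this bookkeeping concern yourself at the end, which is the right instinct; just be aware that it applies to the full-faithfulness step as well, not only to essential surjectivity.
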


The Fargues-Fontaine curve is a ``curve" in the following sense:

\begin{prop}[\cite{FF_curve}]\label{FF curve is a curve} The scheme $\schff$ is a regular, Noetherian scheme over $\finext$ of Krull dimension $1$.  
\end{prop}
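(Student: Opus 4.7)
The plan is to extract the geometric properties of $\schff$ from a direct algebraic analysis of its defining graded ring
\[
P_\bullet \;=\; \bigoplus_{n \geq 0} P_n, \qquad P_n \;=\; H^0(\Ycal, \trivbundle_\Ycal)^{\phi = \pseudounif^n}.
\]
The strategy is to produce enough degree-one elements so that $\schff$ is covered by two standard affines $D_+(t)$ and $D_+(t')$, each the spectrum of a principal ideal domain; regularity, Noetherianness, and dimension one then fall out immediately.

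First, I would produce nonzero elements $t \in P_1$. These are the classical ``period elements'' of $p$-adic Hodge theory, and can be written down as convergent infinite products built from the Teichm\"uller lift $[\pseudounif]$; any such $t$ vanishes on a single $\phi$-orbit of classical points of $\Ycal$, cutting out one closed point $\infty_t \in V_+(t) \subset \schff$. A second degree-one element $t'$ whose $\phi$-orbit of zeros is disjoint from that of $t$ yields a second affine chart, and together $D_+(t) \cup D_+(t') = \schff$.

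The technical heart is to show that the degree-zero affine ring $B_e := P_\bullet[1/t]_0$ is a principal ideal domain. Invoking the GAGA equivalence of Proposition \ref{GAGA for FF curve} together with the classification of classical points of $\Ycal$ by primitive elements of degree one, every maximal ideal of $B_e$ is generated by the class of a primitive $\xi \in P_1$ coprime to $t$. The quotient $B_e/(\xi)$ is then the untilt $C^\sharp$ of $\algclosedperfdfield$ attached to $\xi$, a perfectoid (hence a) field; and the completion of $B_e$ at this maximal ideal is Fontaine's ring $B_{\mathrm{dR}}^+(C^\sharp)$, a complete discrete valuation ring with uniformizer $\xi$. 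A Weierstrass-type factorization---any nonzero element of $B_e$ has only finitely many zeros and equals, up to a unit, the product of the corresponding primitives---then upgrades these local facts to the statement that $B_e$ is a B\'ezout domain with only height-one maximal ideals, i.e.\ a PID.

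Combining the two charts: $\Spec B_e$ and $\Spec(P_\bullet[1/t']_0)$ are both regular Noetherian of dimension one by the PID property, and their union is $\schff$; hence $\schff$ is regular Noetherian of Krull dimension one over $\finext$. The main obstacle is the Weierstrass-type factorization step, which depends on the fine analytic structure of $H^0(\Ycal, \trivbundle_\Ycal)$---concretely, on Newton-polygon and Gauss-norm control of sections on $\Ycal$ together with the fact that primitive elements of degree one exhaust the classical zeros on $\Ycal/\phi^\Z$. Once this factorization input is secured, the rest of the argument is a formal consequence of the slope-theoretic bookkeeping on $\Ycal$.
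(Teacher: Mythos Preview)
Your sketch is a faithful outline of the Fargues--Fontaine argument: cover $\schff$ by two affine charts $D_+(t)$ and $D_+(t')$ for degree-one elements $t, t' \in P_1$, and show that the degree-zero localization $B_e = P_\bullet[1/t]_0$ is a principal ideal domain via a Weierstrass-type factorization of its elements into primitive degree-one elements. This is indeed how the result is established in \cite{FF_curve}.

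However, the paper does not give its own proof of this proposition. It is stated with attribution to \cite{FF_curve} and used as a black box; no argument is supplied in the text. So there is nothing to compare your proposal against beyond noting that your outline matches the original source the paper cites. Your identification of the Weierstrass factorization as the analytic crux is accurate, and your remark that the rest is formal once that input is in hand is correct.
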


\begin{remark}
However, the scheme $\schff$ is not of finite type over $\finext$. In fact, the residue field at a closed point is a complete algebraically closed extension of $\finext$. 
\end{remark}


We can extend the construction of the Fargues-Fontaine curve to relative settings. Let $S = \Spa (\genperfdring, \genperfdring^+)$ be an affinoid perfectoid space over $\Spa (\algclosedperfdfield)$, and let $\pseudounif_\genperfdring$ be a pseudouniformizer  of $\genperfdring$. We take the ring of ramified Witt vectors $\witt_{\finext^\circ}(\genperfdring^+):=\witt(\genperfdring^+) \otimes_{\witt(\F_q)} \integerring{\finext}$ and write $[\pseudounif_\genperfdring]$ for the Teichmuller lift of $\pseudounif_\genperfdring$. One can show that 
\[
\Ycal_S:=\Spa(\witt_{\integerring{\finext}}(\genperfdring^+), \witt_{\integerring{\finext}}(\genperfdring^+))\setminus\{|p[\pseudounif_\genperfdring]|=0\}
\]
is an adic space over $\Spa(\finext)$, equipped with a properly discontinuous automorphism $\phi$ induced by the natural $q$-Frobenius on $\witt_{\integerring{\finext}}(\genperfdring^+)$. 

\begin{defn}
Given an affinoid perfectoid space $S = \Spa (\genperfdring, \genperfdring^+)$ over $\Spa (\algclosedperfdfield)$, we define the \emph{adic Fargues-Fontaine curve} associated to the pair $(\finext, S)$ by
\[
\adicff_S:=\Ycal_S/\phi^\Z,
\]
and the \emph{schematic Fargues-Fontaine curve} associated to the pair $(\finext, S)$ by 
\[\schff_S:= \Proj \left( \bigoplus_{n \geq 0 } H^0(\Ycal_S, \trivbundle_\Ycal)^{\phi = \pseudounif^n} \right).\]
More generally, for an arbitrary perfectoid space $S$ over $\Spa(\algclosedperfdfield)$, we choose an affinoid cover $S = \bigcup S_i = \bigcup \Spa(\genperfdring_i, \genperfdring_i^+)$ and define the adic Fargues-Fontaine curve $\adicff_S$ and the schematic Fargues-Fontaine curve $\schff_S$ respectively by gluing the $\adicff_{S_i}$ and the $\schff_{S_i}$. 
\end{defn}

There is a GAGA type result which extends Proposition \ref{GAGA for FF curve} to relative settings, thereby allowing us to speak interchangeably about vector bundles on $\adicff_S$ and $\schff_S$ for any perfectoid space $S$ over $\Spa(\algclosedperfdfield)$.

\subsection{Slope theory for vector bundles}$ $

In this subsection we briefly review the slope theory for vector bundles on the Fargues-Fontaine curve. 

\begin{defn}
Given a vector bundle $\Vcal$ on $\schff$, we write $\rk(\Vcal)$ for the rank of $\Vcal$ and $\Vcal^\vee$ for the dual bundle of $\Vcal$. 
\end{defn}

The Fargues-Fontaine curve $\schff$ is not a complete curve; in fact, as remarked after Proposition \ref{FF curve is a curve}, it is not even of finite type. Nonetheless, the Fargues-Fontaine curve behaves as a complete curve in the following sense:
\begin{prop}[\cite{FF_curve}]\label{completeness of FF curve}
For an arbitrary nonzero rational function $f$ on $\schff$, its divisor $\mathrm{div}(f)$ has degree zero. 
\end{prop}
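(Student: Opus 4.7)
The plan is to reduce the claim to a statement about nonzero homogeneous elements of the graded ring
\[
P := \bigoplus_{n \geq 0} H^0(\Ycal, \trivbundle_\Ycal)^{\phi = \pseudounif^n}
\]
whose $\Proj$ realizes $\schff$, and then to invoke the factorization theorem for primitive elements due to Fargues-Fontaine. More precisely, I would first write any nonzero rational function $f$ on $\schff$ as a ratio $f = g/h$ with $g, h \in P_d$ nonzero and of a common degree $d \geq 1$. Regarding $g$ and $h$ as global sections of $\trivbundle_\schff(d)$, one then obtains effective divisors $\mathrm{div}(g), \mathrm{div}(h)$ on $\schff$ satisfying $\mathrm{div}(f) = \mathrm{div}(g) - \mathrm{div}(h)$, so it suffices to prove that $\deg \mathrm{div}(g) = d$ for every nonzero $g \in P_d$.

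The substantive input, which I would extract from \cite{FF_curve}, is the following structure theorem for primitive elements: every nonzero $g \in P_d$ factors, uniquely up to permutation and a scalar in $\finext^\times$, as a product $g = c \cdot t_1 \cdots t_d$ with $c \in \finext^\times$ and $t_i \in P_1 \setminus \{0\}$; moreover, the closed points of $\schff$ are in natural bijection with $(P_1 \setminus \{0\})/\finext^\times$, under which each nonzero $t \in P_1$ corresponds to the unique closed point $x_t$ at which the section $t$ of $\trivbundle_\schff(1)$ vanishes, and it does so to order exactly one. Under the standard normalization in which every closed point of $\schff$ contributes $1$ to the degree of a divisor (consistent with $\deg \trivbundle_\schff(1) = 1$), these facts immediately yield $\mathrm{div}(g) = \sum_{i=1}^d [x_{t_i}]$, hence $\deg \mathrm{div}(g) = d$, completing the proof.

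The main obstacle is the factorization theorem for primitive elements, which is genuinely deep: it encodes the fact that $\schff$ behaves as a projective curve of genus zero, and its proof rests on fundamental $p$-adic Hodge-theoretic input. However, it is a foundational result of \cite{FF_curve} whose proof is logically independent of the present paper, so for our purposes we may invoke it as a black box. All remaining work, namely the passage from $f$ to homogeneous representatives $g, h$ and the accounting of the vanishing orders, is a formal consequence of $\schff$ being $\Proj P$ and presents no real difficulty.
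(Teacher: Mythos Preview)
The paper does not supply a proof of this proposition; it is simply quoted as a foundational result from \cite{FF_curve}. Your sketch is a correct outline of the argument one finds there: reduce to showing that a nonzero homogeneous element $g \in P_d$ has $\deg \mathrm{div}(g) = d$, and deduce this from the graded factorization theorem together with the bijection between closed points of $\schff$ and $(P_1 \setminus \{0\})/\finext^\times$. Since the paper itself treats the result as a black box, there is nothing further to compare.
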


We thus have a well-defined notion of degree and slope for vector bundles on $\schff$. 
\begin{defn}
Let $\Vcal$ be a vector bundle on $\schff$. 
\begin{enumerate}[label=(\arabic*)]
\item If $\Vcal$ is a line bundle (i.e., $\rk(\Vcal) = 1$), we define the \emph{degree} of $\Vcal$ by
\[ \deg(\Vcal): = \deg(\mathrm{div}(s))\]
where $s$ is an arbitrary nonzero meromorphic section of $\Vcal$. In general, we define 
\[\deg (\Vcal) := \deg (\wedge^{\rk(\Vcal)} \Vcal).\]
\item We define the \emph{slope} of $\Vcal$ by
\[\mu(\Vcal) := \dfrac{\deg(\Vcal)}{\rk(\Vcal)}.\]
\end{enumerate}
\end{defn}

We explicitly construct some vector bundles on $\schff$ which will serve as building blocks for general vector bundles on $\schff$. Let $\lambda = r/s$ be a rational number written in lowest terms with $r>0$. We choose a trivializing basis $v_1, v_2, \cdots, v_s$ of $\trivbundle_\Ycal^{\oplus s}$, and define an isomorphism $\phi^* \trivbundle_\Ycal^{\oplus s} \stackrel{\sim}{\longrightarrow} \trivbundle_\Ycal^{\oplus s}$ by
\[ v_1 \mapsto v_2, \quad v_2 \mapsto v_3, \quad\cdots, \quad v_{s-1} \mapsto v_s, \quad v_s \mapsto \uniformizer^{-r} v_1,\]
where we abuse notation to view $v_1, v_2, \cdots, v_s$ as a trivializing basis for $\phi^* \trivbundle_\Ycal^{\oplus s}$ as well. We denote by $\widetilde{\trivbundle(\lambda)}$ the vector bundle $\trivbundle_\Ycal^{\oplus s}$ equipped with the isomorphism $\phi^* \trivbundle_\Ycal^{\oplus s} \stackrel{\sim}{\longrightarrow} \trivbundle_\Ycal^{\oplus s}$ as defined above. 


\begin{defn}\label{o-r-over-s}
Given a rational number $\lambda$, we write $\trivbundle(\lambda)$ for the vector bundle on $\adicff$ obtained by descending the vector bundle $\widetilde{\trivbundle(\lambda)}$, and also for the corresponding vector bundle on $\schff$ under the GAGA functor described in Proposition \ref{GAGA for FF curve}. 
\end{defn}

\begin{lemma}\label{rank, degree and dual of stable bundles}
Let $\lambda = r/s$ be a rational number written in lowest terms with $r>0$. 
\begin{enumerate}[label=(\arabic*)]
\item\label{rank and degree of o-lambda} $\rk(\trivbundle(\lambda)) = s$ and $\deg(\trivbundle(\lambda)) = r$. 
\smallskip

\item $\trivbundle(\lambda)^\vee \simeq \trivbundle(-\lambda)$. 
\end{enumerate}
\end{lemma}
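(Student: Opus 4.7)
The plan is to work directly from the construction of $\widetilde{\trivbundle(\lambda)}$ on $\Ycal$ and descend to $\adicff$, transferring the result to $\schff$ via Proposition \ref{GAGA for FF curve}.

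For part (1), the rank is immediate: $\widetilde{\trivbundle(\lambda)}$ has underlying sheaf $\trivbundle_\Ycal^{\oplus s}$, and descent along $\phi^\Z$ preserves rank, so $\rk(\trivbundle(\lambda)) = s$. To compute the degree, I would first identify the determinant line bundle $\det \trivbundle(\lambda) = \wedge^s \trivbundle(\lambda)$. A single cofactor expansion of the cyclic matrix in Definition \ref{o-r-over-s} shows that the induced Frobenius isomorphism on $\wedge^s \widetilde{\trivbundle(\lambda)}$ is multiplication by $(-1)^{s-1} \uniformizer^{-r}$, differing from the defining Frobenius of $\widetilde{\trivbundle(r)}$ only by the unit $(-1)^{s-1}$; a change of trivializing basis absorbs this sign, yielding $\det \trivbundle(\lambda) \cong \trivbundle(r)$ after descent. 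It then suffices to verify the normalization $\deg(\trivbundle(n)) = n$ for positive integers $n$, which via $\trivbundle(n) \cong \trivbundle(1)^{\otimes n}$ reduces to $\deg(\trivbundle(1)) = 1$. This last identity is standard from the Proj construction of $\schff$: any nonzero element of $H^0(\Ycal, \trivbundle_\Ycal)^{\phi = \uniformizer}$ corresponds to a global section of $\trivbundle(1)$ whose divisor is a single closed point.

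For part (2), I would compute the Frobenius structure on the dual bundle explicitly. If $M$ denotes the cyclic permutation matrix in Definition \ref{o-r-over-s}, then $\widetilde{\trivbundle(\lambda)}^\vee$ carries the Frobenius isomorphism $(M^{-1})^T$ on the dual basis $\{v_1^{\vee}, \ldots, v_s^{\vee}\}$. A direct calculation shows that $(M^{-1})^T$ sends $v_i^{\vee} \mapsto v_{i+1}^{\vee}$ for $i < s$ and $v_s^{\vee} \mapsto \uniformizer^{r} v_1^{\vee}$, which matches verbatim the cyclic matrix defining $\widetilde{\trivbundle(-\lambda)}$ once Definition \ref{o-r-over-s} is extended to non-positive slopes by the evident formula (allowing the numerator to be any integer). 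Descending then gives $\trivbundle(\lambda)^\vee \cong \trivbundle(-\lambda)$.

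I do not anticipate a serious obstacle: the argument is essentially bookkeeping with the explicit Frobenius matrices, and the only mild subtleties are tracking sign conventions in the determinant computation and confirming that the natural extension of Definition \ref{o-r-over-s} to non-positive slopes is consistent with the duality statement. No deeper technical input is required beyond Proposition \ref{GAGA for FF curve} and the formal behavior of descent along $\phi^\Z$.
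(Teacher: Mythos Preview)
Your proposal is correct and amounts to a careful unpacking of exactly what the paper leaves implicit: the paper's own proof is the single sentence ``All statements are straightforward to check using Definition \ref{o-r-over-s},'' and your computation of the determinant Frobenius and the dual Frobenius matrix is precisely how one carries out that check. There is no substantive difference in approach; you have simply written out the bookkeeping that the paper omits, including correctly flagging that Definition \ref{o-r-over-s} must be read with the numerator allowed to have either sign for $\trivbundle(-\lambda)$ to make sense.
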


\begin{proof}
All statements are straightforward to check using Definition \ref{o-r-over-s}.
\end{proof}

Let us now recall the notions of semistability and stability. 

\begin{defn} A vector bundle $\Vcal$ on $\schff$ is \emph{semistable} if every nonzero proper subbundle $\Wcal$ of $\Vcal$ satisfies
\begin{equation}\label{semistability slope ineq}
\mu(\Wcal) \leq \mu(\Vcal).
\end{equation}
A semistable vector bundle $\Vcal$ on $\schff$ is \emph{stable} if the equality in \eqref{semistability slope ineq} never holds. 
\end{defn}

It turns out that the category of vector bundles on $\schff$ admits an explicit characterization of stability and semistability, as well as a complete classification of isomorphism classes, in terms of the vector bundles that we constructed in Definition \ref{o-r-over-s}.

\begin{theorem}[\cite{FF_curve}]\label{existence of HN decomp}
Let $\Vcal$ be a vector bundle on $\schff$. 
\begin{enumerate}[label = (\arabic*)]
\item $\Vcal$ is stable of slope $\lambda$ if and only if $\Vcal \simeq \trivbundle(\lambda)$. 

\item $\Vcal$ is semistable of slope $\lambda$ if and only if $\Vcal \simeq \trivbundle(\lambda)^{\oplus n}$ for some $n$. 

\item In general, $\Vcal$ admits a unique direct sum decomposition of the form 
\begin{equation}\label{HN decomposition}
\Vcal \simeq \bigoplus_{i=1}^l \trivbundle(\lambda_i)^{\oplus m_i}
\end{equation}
where $\lambda_1 > \lambda_2 > \cdots > \lambda_l$. 
\end{enumerate}

\end{theorem}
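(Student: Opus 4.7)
The plan is to prove the theorem in three stages of increasing depth: existence of a Harder--Narasimhan filtration on any $\Vcal$, classification of its semistable graded pieces, and splitting of the filtration to yield (3). Each stage uses the previous one in essential form, so I will sketch them in this order.

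For the first stage, I would verify that vector bundles on $\schff$ form a slope category in the sense of Harder--Narasimhan. The key axioms are additivity of degree on short exact sequences, which follows from Proposition \ref{completeness of FF curve} applied to the determinant of the sequence, and a uniform upper bound on slopes of subbundles of a fixed $\Vcal$, which one first establishes for line subbundles and then extends via wedge powers. The standard HN formalism then produces the unique filtration
\[ 0 = \Vcal_0 \subset \Vcal_1 \subset \cdots \subset \Vcal_l = \Vcal \]
with $\Vcal_i / \Vcal_{i-1}$ semistable of slope $\lambda_i$ and $\lambda_1 > \cdots > \lambda_l$.

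The second and hardest stage is the classification of the semistable graded pieces. Using Definition \ref{o-r-over-s} and Lemma \ref{rank, degree and dual of stable bundles}, one checks directly that $\trivbundle(\lambda)$ is stable of slope $\lambda$; the converse --- every semistable bundle $\Vcal$ of slope $\lambda$ is isomorphic to $\trivbundle(\lambda)^{\oplus n}$ --- is the deep content. After a suitable twist reducing to $\lambda = 0$, the problem becomes: every slope-$0$ semistable bundle is free. I would argue by induction on rank: produce a nonzero global section $s \colon \trivbundle \to \Vcal$; note that the saturation of $s$ must be all of $\trivbundle$, for otherwise the saturation would be a line subbundle of strictly positive degree, violating semistability at slope $0$; so the quotient $\Vcal/\trivbundle$ is semistable of slope $0$ with smaller rank, and induction applies. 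This is the main obstacle by a wide margin: producing a nonzero global section of an arbitrary slope-$0$ semistable bundle is not formal and constitutes the core of Fargues--Fontaine's and Kedlaya's classification theorems, requiring genuine input from $p$-adic Hodge theory --- most transparently through the theory of $\phi$-modules over the Robba ring (or the period rings $B_e$ and $B_{\mathrm{dR}}^+$), which reduces the existence of a section to a fixed-point statement for a suitable Frobenius operator.

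For the third stage, I would split the HN filtration by Ext vanishing. Given the classification just obtained, the obstruction to splitting off the bottom graded piece $\mathrm{gr}_l \Vcal$ as a direct summand lies in groups of the form $\mathrm{Ext}^1_{\schff}(\trivbundle(\lambda_l), \trivbundle(\lambda_i))$ for $i < l$, which unwind to $H^1(\schff, \trivbundle(\lambda_l)^\vee \otimes \trivbundle(\lambda_i))$; since the slope $\lambda_i - \lambda_l$ is strictly positive, this $H^1$ vanishes by a direct computation on $\Ycal$ using the presentation of sections as $\phi$-eigenvectors. Iterating produces the decomposition \eqref{HN decomposition}. Uniqueness is immediate from the uniqueness of the HN filtration together with the slope inequality $\mathrm{Hom}(\trivbundle(\lambda), \trivbundle(\mu)) = 0$ for $\lambda > \mu$.
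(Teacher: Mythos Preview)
The paper does not prove this theorem at all: it is stated with attribution to \cite{FF_curve} (and to Kedlaya in the introduction) and used as background input, so there is no ``paper's own proof'' to compare against. Your outline is a faithful sketch of the standard argument from the literature --- HN formalism to get the filtration, the deep step of producing a global section of a slope-$0$ semistable bundle (which you correctly flag as the genuine content, requiring the Fargues--Fontaine or Kedlaya machinery on $\phi$-modules), and then splitting via the vanishing of $H^1(\schff,\trivbundle(\mu))$ for $\mu>0$. One small caveat: the reduction ``twist to slope $0$'' is cleanest only when $\lambda$ is an integer; for general $\lambda$ one typically passes to an unramified cover of $\schff$ (replacing $\finext$ by a finite unramified extension) to make the slope integral before twisting, or argues via the equivalence with isocrystals. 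Otherwise your plan is sound.
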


\begin{defn}\label{def of HN decomposition and HN polygon}
Let $\Vcal$ be a vector bundle on $\schff$. 
\begin{enumerate}[label = (\arabic*)]
\item We refer to the decomposition \eqref{HN decomposition} 
in Theorem \ref{existence of HN decomp} 
as the \emph{Harder-Narasimhan (HN) decomposition} of $\Vcal$. 
\smallskip

\item We refer to the slopes $\lambda_i$ of direct summands in the HN decomposition as the \emph{Harder-Narasimhan (HN) slopes} of $\Vcal$, or often simply as the \emph{slopes} of $\Vcal$.
\smallskip

\item We write $\mumax(\Vcal)$ (resp. $\mumin(\Vcal)$) for the maximum (resp. minimum) HN slope of $\Vcal$. In other words, we set
\[ \mumax(\Vcal) := \lambda_1 \quad\quad \text{ and } \quad\quad \mumin(\Vcal) := \lambda_l.\]

\item For every $\mu \in \Q$ we set
\[\Vcal^{\geq \mu} := \bigoplus_{\lambda_i \geq \mu}\trivbundle(\lambda_i)^{\oplus m_i} \quad\quad \text{ and } \quad\quad\Vcal^{\leq \mu} := \bigoplus_{\lambda_i \leq \mu}\trivbundle(\lambda_i)^{\oplus m_i},\]
and similarly define $\Vcal^{>\mu}$ and $\Vcal^{<\mu}$. 
\smallskip

\item We define the \emph{Harder-Narasimhan (HN) polygon} of $\Vcal$ as the upper convex hull of the points $(0, 0)$ and $\big(\rk(\Vcal^{\geq \lambda_i}), \deg(\Vcal^{\geq \lambda_i})\big)$
\end{enumerate}
\end{defn}

We collect some basic facts about the slope theory for vector bundles on $\schff$. 

\begin{prop}\label{classification by HN polygon}
The isomorphism class of every vector bundle $\Vcal$ on $\schff$ is determined by the HN polygon $\HN(\Vcal)$. 
\end{prop}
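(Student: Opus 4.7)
The plan is to reduce this statement to the uniqueness part of Theorem \ref{existence of HN decomp}, and show that the HN polygon contains exactly the combinatorial data needed to recover the isomorphism class of the HN decomposition.

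First, I would invoke Theorem \ref{existence of HN decomp}, which tells us that $\Vcal$ admits a unique decomposition
\[
\Vcal \simeq \bigoplus_{i=1}^l \trivbundle(\lambda_i)^{\oplus m_i} \quad \text{with } \lambda_1 > \lambda_2 > \cdots > \lambda_l,
\]
so the isomorphism class of $\Vcal$ is determined by the tuples $(\lambda_i, m_i)$. It therefore suffices to show that the HN polygon $\HN(\Vcal)$ determines this data.

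To do this, I would analyze the piecewise-linear structure of the polygon. By Definition \ref{def of HN decomposition and HN polygon}, the breakpoints of $\HN(\Vcal)$ are exactly the lattice points $\bigl(\rk(\Vcal^{\geq \lambda_i}), \deg(\Vcal^{\geq \lambda_i})\bigr)$, and on the $i$-th segment (counted from the left) the polygon goes from the $(i-1)$-st breakpoint to the $i$-th. Using Lemma \ref{rank, degree and dual of stable bundles}, if we write $\lambda_i = r_i/s_i$ in lowest terms, then the horizontal and vertical increments along this segment are $m_i s_i$ and $m_i r_i$ respectively, so the slope of the segment is precisely $\lambda_i$. Reading off these slopes from left to right recovers the $\lambda_i$ in decreasing order; expressing each $\lambda_i$ in lowest terms recovers the denominator $s_i = \rk(\trivbundle(\lambda_i))$; and dividing the horizontal length of the $i$-th segment by $s_i$ recovers the multiplicity $m_i$.

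There is essentially no obstacle here: the entire content of the proposition is that the HN polygon faithfully encodes the data $(\lambda_i, m_i)$, and this is a direct unpacking of the definitions combined with the rank/degree formula of Lemma \ref{rank, degree and dual of stable bundles}\ref{rank and degree of o-lambda}. The only subtlety worth noting is the passage from the slope of a line segment to the denominator of the corresponding stable summand, which requires that each $\lambda_i$ be written in lowest terms; this is where one uses that $\trivbundle(\lambda)$ is genuinely stable (not merely semistable) for the rational number $\lambda$ in lowest terms, as recorded in Theorem \ref{existence of HN decomp}(1).
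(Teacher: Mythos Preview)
Your proposal is correct and follows essentially the same approach as the paper: reduce to recovering the data $(\lambda_i, m_i)$ from the polygon, then read off the slope $\lambda_i$ as the ratio of the vertical to horizontal increment of the $i$-th segment, and recover $m_i$ by dividing the horizontal length by $\rk(\trivbundle(\lambda_i))$ via Lemma~\ref{rank, degree and dual of stable bundles}. The paper's proof is written slightly more tersely but is otherwise identical in structure.
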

\begin{proof}
Let us write the HN decomposition of $\Vcal$ as
\[\Vcal \simeq \bigoplus_{i=1}^l \trivbundle(\lambda_i)^{\oplus m_i}\]
where $\lambda_1 > \lambda_2 > \cdots > \lambda_l$. It suffices to show that we can find the numbers $\lambda_i$ and $m_i$ from $\HN(\Vcal)$. 

Let $x_i$ and $y_i$ respectievly denote the horizontal and vertical length of the $i$-th segment in $\HN(\Vcal)$. From definition we find
\begin{align*}
x_i &= \rk(\Vcal^{\geq \lambda_i}) - \rk(\Vcal^{\geq \lambda_{i-1}}) = \rk(\trivbundle(\lambda_i)^{\oplus m_i}) = m_i \cdot \rk(\trivbundle(\lambda_i)), \\
y_i &= \deg(\Vcal^{\geq \lambda_i}) - \deg(\Vcal^{\geq \lambda_{i-1}}) = \deg(\trivbundle(\lambda_i)^{\oplus m_i}) = m_i \cdot \deg(\trivbundle(\lambda_i)).
\end{align*}
We thus find $\lambda_i$ by
\[\lambda_i = \mu(\trivbundle(\lambda_i)) = \dfrac{\deg(\trivbundle(\lambda_i))}{\rk(\trivbundle(\lambda_i))} = \dfrac{y_i}{x_i}.\]
We then obtain $\rk(\trivbundle(\lambda_i))$ and $\deg(\trivbundle(\lambda_i))$ by Lemma \ref{rank, degree and dual of stable bundles}, and in turn find $m_i$ by
\[ m_i = \dfrac{x_i}{\rk(\trivbundle(\lambda_i))} = \dfrac{y_i}{\deg(\trivbundle(\lambda_i))}.\]
\end{proof}

\begin{lemma}\label{rank and degree of dual bundle}
Let $\Vcal$ be a vector bundle on $\schff$. We have identities
\[ \rk(\Vcal) = \rk(\Vcal^\vee) \quad\text{ and }\quad \deg(\Vcal) = -\deg(\Vcal^\vee).\]
Moreover, for every $\mu \in \Q$ we have identities
\[\rk(\Vcal^{\geq \mu}) = \rk((\Vcal^\vee)^{\leq-\mu}) \quad\text{ and }\quad \deg(\Vcal^{\geq \mu}) = -\deg((\Vcal^\vee)^{\leq-\mu}).\]
\end{lemma}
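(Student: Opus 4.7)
The plan is to reduce everything to the Harder--Narasimhan decomposition from Theorem \ref{existence of HN decomp}, and then apply the stable-bundle calculations from Lemma \ref{rank, degree and dual of stable bundles} summand by summand. First I would write the HN decomposition of $\Vcal$ as
\[
\Vcal \simeq \bigoplus_{i=1}^l \trivbundle(\lambda_i)^{\oplus m_i} \quad \text{ with } \quad \lambda_1 > \lambda_2 > \cdots > \lambda_l,
\]
and dualize it termwise, using part (2) of Lemma \ref{rank, degree and dual of stable bundles} (extended to arbitrary $\lambda\in\Q$ by the self-evident extension of Definition \ref{o-r-over-s}) to obtain
\[
\Vcal^\vee \simeq \bigoplus_{i=1}^l \trivbundle(-\lambda_i)^{\oplus m_i}.
\]
Reindexing by decreasing slope, this is the HN decomposition of $\Vcal^\vee$, with slopes $-\lambda_l > \cdots > -\lambda_1$ and the same multiplicities $m_i$.

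Second, I would read off the first pair of identities directly: since each stable summand $\trivbundle(-\lambda_i)$ has the same rank as $\trivbundle(\lambda_i)$ and its degree is the negative of that of $\trivbundle(\lambda_i)$ (again by Lemma \ref{rank, degree and dual of stable bundles}(1)--(2), written for $\lambda = r/s$ in lowest terms in either sign), summing over $i$ with multiplicity $m_i$ gives $\rk(\Vcal^\vee) = \rk(\Vcal)$ and $\deg(\Vcal^\vee) = -\deg(\Vcal)$.

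Third, for the refined identities I would observe that under the dualization above, a stable summand $\trivbundle(\lambda_i)$ contributes to $\Vcal^{\geq \mu}$ precisely when $\lambda_i \geq \mu$, which is precisely when the corresponding summand $\trivbundle(-\lambda_i)$ of $\Vcal^\vee$ satisfies $-\lambda_i \leq -\mu$, i.e.\ contributes to $(\Vcal^\vee)^{\leq -\mu}$. In other words,
\[
\bigl(\Vcal^\vee\bigr)^{\leq -\mu} \;\simeq\; \bigl(\Vcal^{\geq \mu}\bigr)^\vee,
\]
and applying the already-established first pair of identities to the vector bundle $\Vcal^{\geq \mu}$ in place of $\Vcal$ yields the remaining two equalities. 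There is no real obstacle here: the only thing to be mildly careful about is that Lemma \ref{rank, degree and dual of stable bundles} as stated restricts to $r>0$, so I would either first invoke its natural extension to all rational slopes (including zero and negative, where $\trivbundle(\lambda)^\vee\simeq\trivbundle(-\lambda)$ holds tautologically from Definition \ref{o-r-over-s}) or separate out the HN summands of non-positive slope and treat them directly.
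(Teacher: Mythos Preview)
Your proposal is correct and follows essentially the same approach as the paper: reduce to stable bundles via the HN decomposition and Lemma \ref{rank, degree and dual of stable bundles}, then deduce the refined identities from the isomorphism $(\Vcal^{\geq \mu})^\vee \simeq (\Vcal^\vee)^{\leq -\mu}$. The paper's proof is terser, but the logical structure is identical.
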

\begin{proof}
We verify the first statement for stable $\Vcal$ by Lemma \ref{rank, degree and dual of stable bundles}, then extend it to general $\Vcal$ using the HN decomposition. We then deduce the second statement from the first statement by observing $(\Vcal^{\geq \mu})^\vee \simeq (\Vcal^\vee)^{\leq -\mu}$ using Lemma \ref{rank, degree and dual of stable bundles}. 
\end{proof}

\begin{lemma}\label{zero hom for dominating slopes}
Given two vector bundles $\Vcal$ and $\Wcal$ on $\schff$ with $\mumin(\Vcal)>\mumax(\Wcal)$, we have
\[\mathrm{Hom}(\Vcal, \Wcal) = 0.\]
\end{lemma}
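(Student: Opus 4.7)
The plan is to first reduce, via the Harder-Narasimhan decomposition of Theorem \ref{existence of HN decomp}, to showing that $\mathrm{Hom}(\trivbundle(\alpha), \trivbundle(\beta)) = 0$ whenever $\alpha > \beta$. Writing $\Vcal \simeq \bigoplus_i \trivbundle(\alpha_i)^{\oplus m_i}$ and $\Wcal \simeq \bigoplus_j \trivbundle(\beta_j)^{\oplus n_j}$, the hypothesis $\mumin(\Vcal) > \mumax(\Wcal)$ forces $\alpha_i > \beta_j$ for every pair $(i,j)$; since any bundle homomorphism $\Vcal \to \Wcal$ is given by a matrix of homomorphisms between the stable summands, the reduction is immediate.

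For the stable case, suppose toward contradiction that $f \colon \trivbundle(\alpha) \to \trivbundle(\beta)$ is nonzero. The image $\Ical \subset \trivbundle(\beta)$ is a nonzero torsion-free coherent sheaf, hence a vector bundle since $\schff$ is regular of dimension $1$ (Proposition \ref{FF curve is a curve}). I would then apply stability on both sides. From the short exact sequence $0 \to \ker(f) \to \trivbundle(\alpha) \to \Ical \to 0$, stability of $\trivbundle(\alpha)$ yields $\mu(\Ical) \geq \alpha$: when $\ker(f) \neq 0$, the inequality $\mu(\ker f) < \alpha$ together with additivity of rank and degree upgrades to $\mu(\Ical) > \alpha$, while $\ker(f) = 0$ gives directly $\mu(\Ical) = \alpha$. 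On the other hand, the saturation $\overline{\Ical} \subset \trivbundle(\beta)$ is a subbundle of the same rank as $\Ical$ and of degree at least $\deg(\Ical)$, so $\mu(\overline{\Ical}) \geq \mu(\Ical) \geq \alpha > \beta$; this contradicts the stability of $\trivbundle(\beta)$, which demands $\mu(\overline{\Ical}) \leq \beta$.

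The only nontrivial step is the inequality $\mu(\overline{\Ical}) \geq \mu(\Ical)$, which amounts to saying that the torsion quotient $\overline{\Ical}/\Ical$ on $\schff$ has non-negative degree; this is a direct consequence of the degree theory supported by Proposition \ref{completeness of FF curve}. Everything else is routine slope-formalism bookkeeping using the stability of $\trivbundle(\alpha)$ and $\trivbundle(\beta)$.
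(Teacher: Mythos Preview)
Your proof is correct and follows essentially the same strategy as the paper: reduce via the HN decomposition to the stable case, then derive a contradiction from a nonzero map by bounding the slope of its image from both sides using stability. The only organizational difference is how you obtain the lower bound $\mu(\Ical) \geq \alpha$: you argue directly via the kernel and additivity of rank and degree, whereas the paper dualizes the surjection $\Vcal \twoheadrightarrow \Qcal$ to an injection $\Qcal^\vee \hookrightarrow \Vcal^\vee$ and applies stability of $\Vcal^\vee$ (using Lemma \ref{rank, degree and dual of stable bundles}). Your route is slightly more self-contained in that it avoids invoking the duality $\trivbundle(\lambda)^\vee \simeq \trivbundle(-\lambda)$, at the cost of the extra saturation step; the paper's route sidesteps saturation entirely since it only ever feeds genuine subbundles into the stability inequality.
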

\begin{proof}
Using the HN decomposition, we immediately reduce to the case when both $\Vcal$ and $\Wcal$ are stable. Note that the condition $\mumin(\Vcal)>\mumax(\Wcal)$ now becomes $\mu(\Vcal) > \mu(\Wcal)$. 

Suppose for contradiction that there exists a nonzero bundle map $f: \Vcal \longrightarrow \Wcal$. Let $\Qcal$ denote the image of this map, which is nonzero by our assumption. Since $\Qcal$ is a subbundle of $\Wcal$, stability of $\Wcal$ yields
\begin{equation}\label{stable bundle maps slope ineq for image and target}
\mu(\Qcal) \leq \mu(\Wcal).
\end{equation} 
On the other hand, the surjective bundle map $\Vcal \surj \Qcal$ gives an injective dual map $\Qcal^\vee \inj \Vcal^\vee$. Since stability of $\Vcal$ implies stability of $\Vcal^\vee$ by Lemma \ref{rank, degree and dual of stable bundles}, we obtain an inequality
\[\mu(\Qcal^\vee) \leq \mu(\Vcal^\vee).\]
By Lemma \ref{rank and degree of dual bundle}, this inequality is equivalent to
\begin{equation}\label{stable bundle maps slope ineq for source and image}
\mu(\Vcal) \leq \mu(\Qcal).
\end{equation} 
Now we combine \eqref{stable bundle maps slope ineq for image and target} and \eqref{stable bundle maps slope ineq for source and image} to find $\mu(\Vcal) \leq \mu(\Wcal)$, thereby completing the proof by contradiction. 
\end{proof}

\subsection{Moduli of bundle maps}\label{bundlemaps}$ $

In this subsection we define certain moduli spaces of bundle maps over $\schff$ and discuss some of their key properties. The reader can find a detailed discussion about these spaces in \cite[\S3.3]{Arizona_extvb}. 




Let us first define these moduli spaces as functors on the category of perfectoid spaces over $\Spa(\algclosedperfdfield)$, which we denote by $\Perfd_{/\Spa (\algclosedperfdfield)}$. Note that, by construction, the relative Fargues-Fontaine curve $\schff_S$ for any $S \in \Spa(\algclosedperfdfield)$ comes with a natural map $\schff_S \to \schff$. 

\begin{defn} Let $\Ecal$ and $\Fcal$ be vector bundles on $\schff$. For any $S \in \Spa(\algclosedperfdfield)$, we denote by $\Ecal_S$ and $\Fcal_S$ the vector bundles on $\schff_S$ obtained as the pullback of $\Ecal$ and $\Fcal$ along the map $\schff_S \to \schff$. 
\begin{enumerate}
\item $\Hom(\mathcal{E},\Fcal)$ is the functor
associating $S \in \Perfd_{/\Spa(\algclosedperfdfield)}$ to the set of $\trivbundle_{\schff_S}$-module
maps $m:\Ecal_S\to\Fcal_S$. 
\smallskip

\item $\Surj(\Ecal, \Fcal)$ is the functor associating $S \in \Perfd_{/\Spa(\algclosedperfdfield)}$ to the set of surjective $\trivbundle_{\schff_S}$-module maps $m: \Ecal_S \surj \Fcal_S$. 
\smallskip

\item $\Inj(\Ecal, \Fcal)$ is the functor associating $S \in \Perfd_{/\Spa(\algclosedperfdfield)}$ to the set of 
$\trivbundle_{\schff_S}$-module
maps $m:\Ecal_{S}\to\Fcal_{S}$ whose pullback along the map $\schff_{\overline{x}} \to\schff_S$ for any geometric point $\overline{x}
\to S$
gives an injective $\trivbundle_{\adicff_{\overline{x}}}$-module map. %


\end{enumerate}
\end{defn}



It turns out that we can make sense of these functors as moduli spaces in the category of diamonds as defined by Scholze \cite{Scholze_diamonds}. 

\begin{prop}[{\cite[Proposition 3.3.2, Proposition 3.3.5 and Proposition 3.3.6]{Arizona_extvb}}]\label{moduli of bundle maps fund facts}
Let $\Ecal$ and $\Fcal$ be vector bundles on $\schff$. The functors $\Hom(\Ecal, \Fcal)$, $\Surj(\Ecal, \Fcal)$ and $\Inj(\Ecal, \Fcal)$ are all locally spatial and partially proper diamonds 
in the sense of Scholze \cite{Scholze_diamonds}. Moreover, we have the following facts:
\begin{enumerate}[label=(\arabic*)]
\item The diamond $\Hom(\Ecal, \Fcal)$ is equidimensional of dimension $\deg(\Ecal^\vee \otimes \Fcal)^\nonneg$. 
\smallskip

\item Every nonempty open subfunctor of $\Hom(\Ecal, \Fcal)$ has an $\algclosedperfdfield$-point. 
\smallskip

\item The diamonds $\Surj(\Ecal, \Fcal)$ and $\Inj(\Ecal, \Fcal)$ are both open subfunctors of $\Hom(\Ecal, \Fcal)$. 

\end{enumerate}
\end{prop}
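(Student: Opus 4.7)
The plan is to represent $\Hom(\Ecal,\Fcal)$ as the functor of global sections of the vector bundle $\Vcal := \Ecal^\vee\otimes\Fcal$ on $\schff$, and then to use the HN decomposition (Theorem~\ref{existence of HN decomp}) to reduce the structural and dimension claims to the case of a single stable summand, where one invokes the theory of Banach--Colmez spaces. Concretely, by tensor--hom adjunction, a bundle map $\Ecal_S\to\Fcal_S$ over $\schff_S$ is equivalent to a global section of $\Vcal_S$, so $\Hom(\Ecal,\Fcal)$ is the functor $S\mapsto H^0(\schff_S,\Vcal_S)$. Writing $\Vcal\simeq\bigoplus_i\trivbundle(\lambda_i)^{\oplus m_i}$ and using that the HN decomposition is compatible with pullback along $\schff_S\to\schff$ gives a product decomposition $\Hom(\Ecal,\Fcal)\simeq\prod_i H^0(\trivbundle(\lambda_i))^{m_i}$.

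For $\lambda_i<0$ the functor $H^0(\trivbundle(\lambda_i))$ vanishes, while for $\lambda_i\geq 0$ it is a Banach--Colmez space. Invoking Le~Bras's work together with Scholze's diamond formalism (as recorded in~\cite{Arizona_extvb}), each such $H^0(\trivbundle(\lambda_i))$ is a locally spatial, partially proper diamond over $\Spa(\algclosedperfdfield)$ of dimension $\deg(\trivbundle(\lambda_i))$. Both properties pass to finite products, with dimensions adding, which yields the structural assertion and~(1), since $\sum_{\lambda_i\geq 0}m_i\deg(\trivbundle(\lambda_i)) = \deg(\Vcal^{\geq 0})$. For~(2), I would invoke the general fact that a locally spatial diamond over $\Spa(\algclosedperfdfield)$ has a dense set of $\algclosedperfdfield$-points whenever $\algclosedperfdfield$ is algebraically closed perfectoid: Banach--Colmez spaces for nonnegative slopes admit pro-\'etale presentations by perfectoid $\algclosedperfdfield$-analytic affine spaces, whose $\algclosedperfdfield$-points are dense, so any nonempty open in $\Hom(\Ecal,\Fcal)$ meets this dense set.

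For~(3), the task is to check that surjectivity and fiberwise injectivity cut out open subfunctors. Given the universal bundle map $m:\Ecal_S\to\Fcal_S$ on $\schff_S$, the cokernel $\coker(m)$ is a coherent sheaf, and surjectivity on a geometric fiber is equivalent to vanishing of the corresponding fiber of $\coker(m)$, an open condition by semicontinuity of fiberwise rank; similarly, fiberwise injectivity means $\Ima(m)$ attains the maximal possible rank $\rk(\Ecal)$ on geometric fibers, again an open condition.

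The main obstacle I expect is the first step, namely establishing that $H^0(\trivbundle(\lambda))$ for $\lambda\geq 0$ is a locally spatial, partially proper diamond of dimension $\deg(\trivbundle(\lambda))$. This requires a careful pro-\'etale presentation of Banach--Colmez spaces and a direct appeal to Scholze's dimension theory for diamonds, and it is precisely the technical heart of the arguments in~\cite{Arizona_extvb}. By comparison, the openness claims in~(3) are routine once the relative coherent sheaf formalism on $\schff_S$ is in place, and~(2) is a formal consequence of the locally spatial structure established in~(1).
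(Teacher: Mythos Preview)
The paper does not give its own proof of this proposition: it is stated as a citation of \cite[Proposition~3.3.2, Proposition~3.3.5 and Proposition~3.3.6]{Arizona_extvb}, with no argument provided. So there is nothing to compare your proposal against in this paper.

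That said, your sketch is a reasonable outline of the argument one finds in the cited reference. The reduction via $\Hom(\Ecal,\Fcal)\simeq H^0(\Ecal^\vee\otimes\Fcal)$ and the HN decomposition to a product of Banach--Colmez spaces $H^0(\trivbundle(\lambda))$ is exactly the strategy used there, and the paper's own remark after the proposition confirms that the diamond dimension coincides with the Banach--Colmez ``principal'' dimension. Your identification of the hard step (showing each $H^0(\trivbundle(\lambda))$ for $\lambda\geq 0$ is a locally spatial, partially proper diamond of the correct dimension) is accurate; this is indeed the technical core in \cite{Arizona_extvb}. One small point: for $\lambda<0$ you assert the functor $H^0(\trivbundle(\lambda))$ vanishes, which is true on the relative curves $\schff_S$ as well, but this deserves a word of justification (it follows from the slope formalism in families, e.g.\ Kedlaya--Liu). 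Otherwise your outline matches what the cited source does.
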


\begin{remark}
The functor $\Hom(\Ecal, \Fcal)$ is also a Banach-Colmez space as defined by Colmez \cite{Colmez_BCspace}. Moreover, its dimension as a diamond coincides with its ``principal" dimension as a Banach-Colmez space. 
\end{remark}

\begin{defn}
We write $|\Hom(\Ecal, \Fcal)|, |\Surj(\Ecal, \Fcal)|$ and $|\Inj(\Ecal, \Fcal)|$ respectively for the underlying topological space of the diamonds $\Hom(\Ecal, \Fcal), \Surj(\Ecal, \Fcal)$ and $\Inj(\Ecal, \Fcal)$. 
\end{defn}



For the proof of our main theorem, we will consider a stratification of the $\Hom$ space according to the isomorphism type of image. 
\begin{defn}\label{Hom with specified image} Given vector bundles $\Ecal, \Fcal$ and $\Qcal$ on $\schff$, we define $\Hom(\Ecal, \Fcal)_\Qcal$ as the image of 
the map of diamonds
\[\Surj(\Ecal,\Qcal) \times_{\Spd\,\algclosedperfdfield} \Inj(\Qcal,\Fcal) \to \Hom(\Ecal,\Fcal)\] 
induced by composition of bundle maps, and denote by $|\Hom(\Ecal, \Fcal)_{\Qcal}|$ the underlying topological space of $\Hom(\Ecal, \Fcal)_\Qcal$.
\end{defn}


\begin{prop}[{\cite[Proposition 3.3.9 and Lemma 3.3.10]{Arizona_extvb}}]\label{properties of Hom with specified image}
Given vector bundles $\Ecal, \Fcal$ and $\Qcal$ on $\schff$, the topological space $|\Hom(\Ecal, \Fcal)_{\Qcal}|$ satisfies the following properties:
\begin{enumerate}[label=(\arabic*)]
\item\label{Hom with specified image behave nicely} $|\Hom(\Ecal, \Fcal)_{\Qcal}|$ is stable under generalization and specialization inside $|\Hom(\Ecal, \Fcal)|$. 
\smallskip

\item\label{dim formula for Hom with specified image} If $|\Hom(\Ecal, \Fcal)_{\Qcal}|$ is nonempty, its dimension is given by
\[\dim |\Hom(\Ecal, \Fcal)_{\Qcal}| = \deg(\Ecal^\vee \otimes \Qcal)^\nonneg + \deg(\Qcal^\vee \otimes \Fcal)^\nonneg - \deg(\Qcal^\vee \otimes \Qcal)^\nonneg.\]
\end{enumerate}
\end{prop}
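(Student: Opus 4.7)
The central object is the composition map
\[
\pi : \Surj(\Ecal, \Qcal) \times_{\Spd \algclosedperfdfield} \Inj(\Qcal, \Fcal) \longrightarrow \Hom(\Ecal, \Fcal),
\]
whose set-theoretic image is $|\Hom(\Ecal, \Fcal)_\Qcal|$ by Definition \ref{Hom with specified image}. The key observation for the whole argument is that the group diamond $\Aut(\Qcal)$, an open subdiamond of $\End(\Qcal) = \Hom(\Qcal, \Qcal)$, acts on the source of $\pi$ via $g \cdot (s, i) = (g \circ s, i \circ g^{-1})$; this action is free, and two pairs lie in the same orbit if and only if they have the same composition (since the surjection-injection factorization of a bundle map with specified image type is unique up to the choice of an identification of the image with $\Qcal$). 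Hence $\pi$ exhibits its source as an $\Aut(\Qcal)$-torsor over $\Hom(\Ecal, \Fcal)_\Qcal$, which will give both the topological structure needed in \ref{Hom with specified image behave nicely} and the dimension count in \ref{dim formula for Hom with specified image}.

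For \ref{Hom with specified image behave nicely}, I would reduce to a statement on diamond points. A point $x \in |\Hom(\Ecal, \Fcal)|$ is represented by a morphism $\Spa(\Ccal, \Ccal^+) \to \Hom(\Ecal, \Fcal)$ for some algebraically closed perfectoid field $\Ccal$ over $\algclosedperfdfield$, and its specializations and generalizations within $|\Hom(\Ecal, \Fcal)|$ correspond to varying the open and bounded valuation subring $\Ccal^+ \subset \Ocal_\Ccal$ while keeping $\Ccal$ fixed. Since both $\schff_{\Spa(\Ccal, \Ccal^+)}$ and its category of vector bundles are insensitive to the choice of $\Ccal^+$, the underlying bundle map and the isomorphism class of its image on $\schff_\Ccal$ depend only on $\Ccal$. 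This yields stability under specialization and generalization.

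For \ref{dim formula for Hom with specified image}, the plan is to apply the dimension formula for torsors of locally spatial, partially proper diamonds to $\pi$, obtaining
\[
\dim |\Hom(\Ecal, \Fcal)_\Qcal| \;=\; \dim \Surj(\Ecal, \Qcal) + \dim \Inj(\Qcal, \Fcal) - \dim \Aut(\Qcal).
\]
Assuming $\Hom(\Ecal, \Fcal)_\Qcal$ is nonempty, both $\Surj(\Ecal, \Qcal)$ and $\Inj(\Qcal, \Fcal)$ are nonempty; by Proposition \ref{moduli of bundle maps fund facts}, they are open subdiamonds of the corresponding $\Hom$-spaces, with dimensions $\deg(\Ecal^\vee \otimes \Qcal)^\nonneg$ and $\deg(\Qcal^\vee \otimes \Fcal)^\nonneg$, respectively. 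Likewise, $\Aut(\Qcal)$ is an open subdiamond of $\Hom(\Qcal, \Qcal)$ of dimension $\deg(\Qcal^\vee \otimes \Qcal)^\nonneg$. Substituting these three quantities into the displayed identity yields the desired formula.

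The main obstacle is to rigorously set up the $\Aut(\Qcal)$-torsor structure and to invoke a clean dimension-of-quotient formula in the diamond setting: one must verify that $\Aut(\Qcal)$ is a well-defined group diamond, that its action on the source of $\pi$ is free in the diamond sense, and then invoke the diamond analogue of the relation ``$\dim(\text{total}) = \dim(\text{base}) + \dim(\text{structure group})$'' for a free action. Part \ref{Hom with specified image behave nicely}, while conceptually more straightforward, also requires care in making precise the claim that the Fargues-Fontaine curve construction is insensitive to the auxiliary choice of $\Ccal^+$ with $\Ccal$ fixed.
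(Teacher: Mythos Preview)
The paper does not supply its own proof of this proposition; it is quoted verbatim from \cite[Proposition 3.3.9 and Lemma 3.3.10]{Arizona_extvb} and used as a black box. Your sketch is essentially the argument given in that reference: the $\Aut(\Qcal)$-torsor description of the composition map is precisely how the dimension formula is derived there, and the stability under generalization and specialization is deduced from partial properness exactly as you indicate (points differing only in the choice of $\Ccal^+$ inside a fixed algebraically closed perfectoid $\Ccal$ yield the same relative curve and hence the same image bundle). So your proposal is correct and matches the cited source; there is nothing further to compare against in the present paper.
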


\section{Classification of subbundles}
  
\subsection{The main theorem and primary reductions}\label{statement and primary reduction}$ $

The rest of this paper will be devoted to establishing our main result as stated below.  


\begin{theorem}\label{classification of subbundles}
Let $\Fcal$ be a vector bundle on $\schff$. Then a vector bundle $\Ecal$ on $\schff$ is a subbundle of $\Fcal$ if and only if the following equivalent conditions are satisfied:
\begin{enumerate}[label=(\roman*)]
\item\label{rank inequalities for subbundles} $\rk(\Ecal^{\geq \mu}) \leq \rk(\Fcal^{\geq \mu})$ for every $\mu \in \Q$. 

\smallskip

\item\label{slopewise dominance for subbundles} For each $i = 1, \cdots, \rank(\Ecal)$, the slope of $\HN(\Ecal)$ on the interval $[i-1, i]$ is less than or equal to the slope of $\HN(\Fcal)$ on this interval. 
\end{enumerate}

\begin{figure}[H]
\begin{tikzpicture}[scale=1]

		\coordinate (left) at (0, 0);
		\coordinate (q0) at (1,2.5);
		\coordinate (q1) at (2.5, 4);
		\coordinate (q2) at (5, 5);
		\coordinate (q3) at (8, 4);
		

		\coordinate (p0) at (2, 1.5);
		\coordinate (p1) at (4.5, 2);
		\coordinate (p2) at (6.5, 0.7);
				
		\draw[step=1cm,thick] (left) -- (q0) --  (q1) -- (q2) -- (q3);
		\draw[step=1cm,thick] (left) -- (p0) --  (p1) -- (p2);
		
		\draw [fill] (q0) circle [radius=0.05];		
		\draw [fill] (q1) circle [radius=0.05];		
		\draw [fill] (q2) circle [radius=0.05];		
		\draw [fill] (q3) circle [radius=0.05];		
		\draw [fill] (left) circle [radius=0.05];
		
		\draw [fill] (p0) circle [radius=0.05];		
		\draw [fill] (p1) circle [radius=0.05];		
		\draw [fill] (p2) circle [radius=0.05];		
		
		\draw[step=1cm,dotted] (3, -0.4) -- (3, 5);
       		\draw[step=1cm,dotted] (3.5, -0.4) -- (3.5, 5);

		\node at (2.8,-0.8) {\scriptsize $i-1$};
		\node at (3.5,-0.8) {\scriptsize $i$};
		
		\path (q3) ++(0.8, 0.05) node {$\HN(\Fcal)$};
		\path (p2) ++(0.8, 0.05) node {$\HN(\Ecal)$};
		\path (left) ++(-0.3, -0.05) node {$O$};

\end{tikzpicture}
\caption{Illustration of the condition \ref{slopewise dominance for subbundles} in Theorem \ref{classification of subbundles}.}
\end{figure}
\end{theorem}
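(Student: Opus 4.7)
My plan is to split the proof into three parts: necessity, equivalence of \ref{rank inequalities for subbundles} and \ref{slopewise dominance for subbundles}, and sufficiency. The equivalence of the two conditions is purely elementary: both $\HN(\Ecal)$ and $\HN(\Fcal)$ are concave, so pointwise comparison of slopes at integer positions (condition \ref{slopewise dominance for subbundles}) is equivalent to comparison of the horizontal lengths of super-level sets (condition \ref{rank inequalities for subbundles}). For necessity, given an injection $\Ecal \inj \Fcal$ and any $\mu \in \Q$, I would consider the composition
\[
\Ecal^{\geq \mu} \inj \Ecal \inj \Fcal \surj \Fcal/\Fcal^{\geq \mu}.
\]
Since $\mumin(\Ecal^{\geq \mu}) \geq \mu > \mumax(\Fcal/\Fcal^{\geq \mu})$, Lemma \ref{zero hom for dominating slopes} forces this composition to vanish, so the image of $\Ecal^{\geq \mu}$ in $\Fcal$ is contained in $\Fcal^{\geq \mu}$; injectivity then gives $\rk(\Ecal^{\geq \mu}) \leq \rk(\Fcal^{\geq \mu})$.

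For the sufficiency direction, which constitutes the main content, I would follow the moduli-theoretic strategy outlined in the introduction. It suffices to produce an $\algclosedperfdfield$-point of $\Inj(\Ecal, \Fcal) = \Hom(\Ecal, \Fcal)_\Ecal$. By Proposition \ref{moduli of bundle maps fund facts}(1), $\Hom(\Ecal, \Fcal)$ is equidimensional of dimension $d := \deg(\Ecal^\vee \otimes \Fcal)^\nonneg$. Its underlying topological space is covered (set-theoretically) by the strata $|\Hom(\Ecal, \Fcal)_\Qcal|$, as $\Qcal$ runs over the finitely many isomorphism classes of vector bundles that are simultaneously a quotient of $\Ecal$ and a subbundle of $\Fcal$.

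The crux will be the quantitative strict inequality (which I expect to formulate as the paper's Proposition \ref{key inequality})
\[
\deg(\Ecal^\vee \otimes \Qcal)^\nonneg + \deg(\Qcal^\vee \otimes \Fcal)^\nonneg - \deg(\Qcal^\vee \otimes \Qcal)^\nonneg < d
\]
for every such $\Qcal \not\simeq \Ecal$. Granting this, Proposition \ref{properties of Hom with specified image}\ref{dim formula for Hom with specified image} implies that every stratum with $\Qcal \neq \Ecal$ has dimension strictly less than $d$. Since $|\Hom(\Ecal, \Fcal)|$ is equidimensional of dimension $d$ and is covered by finitely many strata, the stratum $|\Hom(\Ecal, \Fcal)_\Ecal|$ must carry all the top-dimensional points and in particular be nonempty; Proposition \ref{moduli of bundle maps fund facts}(2) then supplies the desired geometric point, noting that $\Inj(\Ecal,\Fcal)$ is open in $\Hom(\Ecal,\Fcal)$ by Proposition \ref{moduli of bundle maps fund facts}(3).

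I expect the principal obstacle to be the proof of the key inequality itself, which I would attack by a degenerating process on the dual bundle $\Ecal^\vee$ in close parallel with the degeneration of $\Fcal$ carried out in \cite{Hong_quotvb}. Roughly, I would deform $\Ecal^\vee$ by moving a small amount of slope-mass between adjacent HN slopes, producing a family of auxiliary bundles that preserves the hypothesis \ref{slopewise dominance for subbundles} relative to $\Fcal$ and that reduces to a tractable base case (e.g., $\Ecal$ semistable). The delicate point, which rules out a direct dualization of the result in \cite{Hong_quotvb}, is that the three degree terms involving $\Qcal$ respond asymmetrically to this degeneration; controlling them simultaneously will require invoking the notion of \emph{slopewise dominance} from \cite{Hong_quotvb} as a bookkeeping device, and making nontrivial adjustments at each step where the roles of ``quotient of $\Ecal$'' and ``subbundle of $\Fcal$'' genuinely differ.
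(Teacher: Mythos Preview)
Your overall strategy---necessity via Lemma~\ref{zero hom for dominating slopes}, equivalence by convexity, sufficiency by dimension-counting on the strata $|\Hom(\Ecal,\Fcal)_\Qcal|$---matches the paper's. But there is a genuine gap: the strict inequality you claim is \emph{false} as stated. Take $\Ecal = \Fcal = \Ocal^{\oplus 2}$ and $\Qcal = \Ocal$; every term vanishes and you get $0 < 0$. The paper first reduces (Proposition~\ref{reduction on common slopes}) to the case where $\Ecal$ and $\Fcal$ have no common HN slopes, by splitting off the maximal common direct summand; this hypothesis then appears explicitly as condition~\ref{no common slopes for E and F} in Proposition~\ref{key inequality} and is essential for strictness. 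Without this preliminary reduction your dimension argument collapses, since strata with $\Qcal \not\simeq \Ecal$ can already have full dimension.

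Your degeneration sketch also aims at the wrong target. The paper does not deform $\Ecal$ toward a semistable bundle; after further reductions to integer slopes, to $\rk(\Qcal) = \rk(\Ecal)-1$, and to $\mumax(\Ecal)=0$, it constructs a finite sequence $\Ecal = \Ecal_0, \Ecal_1, \dots, \Ecal_r = \Qcal$ in which each $\Ecal_{i+1}^\vee$ is obtained from $\Ecal_i^\vee$ by a ``maximal slope reduction'' toward $\Qcal^\vee$ (Definition~\ref{max slope reduction definition}). The endpoint is then the tautology $c_{\Qcal,\Fcal}(\Qcal) = 0$, and the content lies in proving $c_{\Ecal_i,\Fcal}(\Qcal) \geq c_{\Ecal_{i+1},\Fcal}(\Qcal)$ with strict inequality somewhere in the first two steps (Propositions~\ref{decreasing cEF(Q) for degenerating sequence} and~\ref{strict decreasing cEF(Q) in first two steps}). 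Degenerating $\Ecal$ toward $\Qcal$, rather than toward something semistable, is precisely what makes the base case trivial; a semistable endpoint would leave you with a nontrivial inequality still to prove.
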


We begin our proof of Theorem \ref{classification of subbundles} by proving necessity of the condition \ref{rank inequalities for subbundles}. 


\begin{prop}\label{subbundles necessary condition}
Given a vector bundle $\Fcal$ on $\schff$, every subbundle $\Ecal$ of $\Fcal$ satisfies the condition \ref{rank inequalities for subbundles} in Theorem \ref{classification of subbundles}. 
\end{prop}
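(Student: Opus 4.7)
The plan is to produce, for each fixed $\mu \in \Q$, an injection of vector bundles $\Ecal^{\geq \mu} \hookrightarrow \Fcal^{\geq \mu}$. Taking ranks of such an injection immediately yields the desired inequality $\rk(\Ecal^{\geq \mu}) \leq \rk(\Fcal^{\geq \mu})$, so the entire content is constructing this map.

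First I would split $\Fcal$ using its HN decomposition as $\Fcal \simeq \Fcal^{\geq \mu} \oplus \Fcal^{<\mu}$, which exhibits $\Fcal^{\geq \mu}$ as the kernel of the projection $\pi: \Fcal \twoheadrightarrow \Fcal^{<\mu}$. Let $\iota: \Ecal \hookrightarrow \Fcal$ denote the given subbundle inclusion, and consider the composition
\[
\Ecal^{\geq \mu} \hookrightarrow \Ecal \xrightarrow{\iota} \Fcal \xrightarrow{\pi} \Fcal^{<\mu}.
\]
By construction $\mumin(\Ecal^{\geq \mu}) \geq \mu$, and if $\Fcal^{<\mu}$ is nonzero then $\mumax(\Fcal^{<\mu}) < \mu$; in either case we have $\mumin(\Ecal^{\geq \mu}) > \mumax(\Fcal^{<\mu})$, and Lemma \ref{zero hom for dominating slopes} forces the composition above to vanish.

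Consequently, the map $\Ecal^{\geq \mu} \to \Fcal$ obtained from $\iota$ factors through $\ker(\pi) = \Fcal^{\geq \mu}$, producing a map $\Ecal^{\geq \mu} \to \Fcal^{\geq \mu}$. This map is injective because it arises as a restriction of $\iota$ to the subbundle $\Ecal^{\geq \mu} \hookrightarrow \Ecal$, and $\iota$ itself is injective. Taking ranks gives the conclusion. The argument is entirely formal given the slope-theoretic input, so there is no serious obstacle; the only place to be a little careful is checking the trivial edge cases $\Ecal^{\geq \mu} = 0$ or $\Fcal^{<\mu} = 0$, both of which make the rank inequality or the vanishing statement automatic.
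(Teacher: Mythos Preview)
Your proof is correct and follows essentially the same approach as the paper: both use Lemma \ref{zero hom for dominating slopes} to show that the given injection $\Ecal \hookrightarrow \Fcal$ restricts to an injection $\Ecal^{\geq \mu} \hookrightarrow \Fcal^{\geq \mu}$, from which the rank inequality follows. The paper's proof is simply a terser version of yours, omitting the explicit factorization through $\ker(\pi)$ that you spell out.
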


\begin{proof}
Let $\mu$ be an arbitrary rational number, and choose an injective bundle map $\Ecal \inj \Fcal$. Lemma \ref{zero hom for dominating slopes} implies that this map should embed $\Ecal^{\geq \mu}$ into $\Fcal^{\geq \mu}$, thereby yielding the desired inequality $\rk(\Ecal^{\geq \mu}) \leq \rk(\Fcal^{\geq \mu})$. 
\end{proof}

For sufficiency of the condition \ref{rank inequalities for subbundles}, we note the following easy but important reduction.
\begin{prop}\label{reduction on common slopes}
We may prove sufficiency of the condition \ref{rank inequalities for subbundles} in Theorem \ref{classification of subbundles} under the assumption that $\Ecal$ and $\Fcal$ have no common slopes. 
\end{prop}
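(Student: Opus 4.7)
The plan is to identify the common slopes, split them off as a shared direct summand of $\Ecal$ and $\Fcal$, and transport the rank inequalities to the remaining pieces, which by construction will have no common slopes. Concretely, let $\Lambda$ be the (finite) set of rational numbers appearing as a slope in both $\Ecal$ and $\Fcal$, and for any vector bundle $\Vcal$ on $\schff$ let $m_\lambda(\Vcal)$ denote the multiplicity of $\trivbundle(\lambda)$ in the HN decomposition of $\Vcal$. For each $\lambda \in \Lambda$ set $c_\lambda := \min(m_\lambda(\Ecal), m_\lambda(\Fcal))$ and define $\Gcal := \bigoplus_{\lambda \in \Lambda} \trivbundle(\lambda)^{\oplus c_\lambda}$. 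Theorem \ref{existence of HN decomp} furnishes direct sum decompositions
\[ \Ecal \simeq \Gcal \oplus \Ecal' \quad\text{and}\quad \Fcal \simeq \Gcal \oplus \Fcal'.\]
By the choice of $c_\lambda$, for every $\lambda \in \Lambda$ at least one of $m_\lambda(\Ecal')$ or $m_\lambda(\Fcal')$ vanishes, while any slope outside $\Lambda$ already appears in at most one of $\Ecal$ or $\Fcal$; thus $\Ecal'$ and $\Fcal'$ share no common slope.

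Next I would verify that the pair $(\Ecal', \Fcal')$ still satisfies the rank inequalities of condition \ref{rank inequalities for subbundles}. The operation $\Vcal \mapsto \Vcal^{\geq \mu}$ respects direct sums, so $(\Gcal \oplus \Ecal')^{\geq \mu} = \Gcal^{\geq \mu} \oplus (\Ecal')^{\geq \mu}$ and similarly for $\Fcal$. Hence for every $\mu \in \Q$,
\[ \rk((\Ecal')^{\geq \mu}) = \rk(\Ecal^{\geq \mu}) - \rk(\Gcal^{\geq \mu}) \leq \rk(\Fcal^{\geq \mu}) - \rk(\Gcal^{\geq \mu}) = \rk((\Fcal')^{\geq \mu}),\]
where the middle inequality is condition \ref{rank inequalities for subbundles} applied to the original pair $(\Ecal, \Fcal)$.

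Granting sufficiency of \ref{rank inequalities for subbundles} in the no-common-slopes case, the pair $(\Ecal', \Fcal')$ then admits a subbundle inclusion $\Ecal' \hookrightarrow \Fcal'$. Taking direct sum with the identity map on $\Gcal$ yields an injective map
\[ \Ecal \simeq \Gcal \oplus \Ecal' \hookrightarrow \Gcal \oplus \Fcal' \simeq \Fcal\]
whose cokernel is isomorphic to $\Fcal'/\Ecal'$, which is a vector bundle on $\schff$; so the composite is an honest subbundle inclusion and the reduction is complete. I anticipate no serious obstacle in executing this plan — the argument is essentially bookkeeping around the HN decomposition — the only minor point to confirm is that the splittings from Theorem \ref{existence of HN decomp} are compatible with the operation $\Vcal \mapsto \Vcal^{\geq \mu}$, which is immediate from the block-diagonal shape of the HN decomposition.
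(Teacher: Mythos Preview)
Your argument is correct and follows essentially the same approach as the paper: split off a maximal common direct summand, transfer the rank inequalities to the complements via additivity of $\Vcal \mapsto \Vcal^{\geq \mu}$ under direct sums, and then extend an injection on the complements by the identity on the common piece. You are more explicit than the paper about how the common summand $\Gcal$ is built (via $c_\lambda = \min(m_\lambda(\Ecal), m_\lambda(\Fcal))$), which is a nice clarification.

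One small correction: your final clause ``whose cokernel is isomorphic to $\Fcal'/\Ecal'$, which is a vector bundle on $\schff$'' is both unnecessary and in general false. In this paper, ``$\Ecal$ is a subbundle of $\Fcal$'' simply means there exists an injective $\Ocal_\schff$-module map $\Ecal \hookrightarrow \Fcal$ (see the proof of Proposition~\ref{subbundles necessary condition} and the definition of $\Inj(\Ecal,\Fcal)$); no condition on the cokernel is imposed. And indeed the cokernel $\Fcal'/\Ecal'$ need not be locally free --- for instance, the injection $\trivbundle \hookrightarrow \trivbundle(1)$ has a torsion cokernel. So you should simply stop after observing that the direct sum of an injection with the identity is again an injection; that already gives what is needed.
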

\begin{proof}
Let $\Ecal$ and $\Fcal$ be vector bundles on $\schff$ which satisfy the condition \ref{rank inequalities for subbundles} in Theorem \ref{classification of subbundles}. From HN decompositions, we find decompositions
\begin{equation}\label{max common slope decomps} 
\Ecal \simeq \Ucal \oplus \nocommonslopered{\Ecal} \quad\quad\text{ and } \quad\quad \Fcal \simeq \Ucal \oplus \nocommonslopered{\Fcal}
\end{equation}
where $\nocommonslopered{\Ecal}$ and $\nocommonslopered{\Fcal}$ have no common slopes. For every $\mu \in \Q$ the decompositions \eqref{max common slope decomps} yield
\begin{align*}
\rk(\Ecal^{\geq \mu}) = \rk(\Ucal^{\geq \mu}) + \rk(\nocommonslopered{\Ecal}^{\geq \mu}) \quad\quad \text{ and } \quad\quad \rk(\Fcal^{\geq \mu}) = \rk(\Ucal^{\geq \mu}) + \rk(\nocommonslopered{\Fcal}^{\geq \mu}).
\end{align*}
Since $\Ecal$ and $\Fcal$ satisfy the condition \ref{rank inequalities for subbundles} in Theorem \ref{classification of subbundles}, we consequently find
\[ \rk(\nocommonslopered{\Ecal}^{\geq \mu}) \leq \rk(\nocommonslopered{\Fcal}^{\geq \mu}) \quad\quad\text{ for every } \mu \in \Q.\]
Moreover, an injective map $\nocommonslopered{\Ecal} \inj \nocommonslopered{\Fcal}$ gives rise to an injective map $\Ecal \inj \Fcal$ by direct summing wih the identity map on $\Ucal$. Hence we may prove sufficiency of the condition \ref{rank inequalities for subbundles} in Theorem \ref{classification of subbundles} after replacing $\Ecal$ and $\Fcal$ by $\nocommonslopered{\Ecal}$ and $\nocommonslopered{\Fcal}$. We thus have the desired reduction as $\nocommonslopered{\Ecal}$ and $\nocommonslopered{\Fcal}$ have no common slopes. 
\end{proof}

We now consider equivalence of the two conditions in Theorem \ref{classification of subbundles}. For convenience, we define the condition \ref{slopewise dominance for subbundles} in Theorem \ref{classification of subbundles} as a separate notion.
\begin{defn}
Let $\Ecal$ and $\Fcal$ be vector bundles on $\schff$. We say that $\Fcal$ \emph{slopewise dominates} $\Ecal$ if the condition \ref{slopewise dominance for subbundles} in Theorem \ref{classification of subbundles} is satisfied. 
\end{defn}
This notion is originally introduced by the author in \cite{Hong_quotvb} where equivalence of the conditions \ref{rank inequalities for subbundles} and \ref{slopewise dominance for subbundles} in Theorem \ref{classification of subbundles} is proved in the following form:
\begin{prop}[{\cite[Lemma 4.2.2]{Hong_quotvb}}]\label{equivalence of two characterizations for subbundles}
Let $\Ecal$ and $\Fcal$ be vector bundles on $\schff$. Then we have $\rk(\Ecal^{\geq \mu}) \leq \rk(\Fcal^{\geq \mu})$ for every $\mu \in \Q$ if and only if $\Fcal$ slopewise dominates $\Ecal$. 
\end{prop}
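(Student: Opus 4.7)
The plan is to translate both conditions into parallel statements about the non-increasing sequence of slopes of unit-length segments of the HN polygons, and then verify the equivalence by a short combinatorial argument based on monotonicity.

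For any vector bundle $\Vcal$ on $\schff$, write $s_\Vcal(i)$ for the slope of $\HN(\Vcal)$ on the interval $[i-1,i]$, for $i = 1,\ldots,\rk(\Vcal)$. Convexity of $\HN(\Vcal)$ gives $s_\Vcal(1) \geq s_\Vcal(2) \geq \cdots \geq s_\Vcal(\rk(\Vcal))$. Unwinding the definition of $\HN(\Vcal)$ as the upper convex hull of the points $(\rk(\Vcal^{\geq\lambda_i}), \deg(\Vcal^{\geq\lambda_i}))$, one sees that the horizontal length of the segment of slope $\lambda_j$ is exactly $m_j \cdot \rk(\trivbundle(\lambda_j))$. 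Consequently, for every $\mu \in \Q$,
\[\rk(\Vcal^{\geq \mu}) \,=\, \sum_{\lambda_j \geq \mu} m_j \cdot \rk(\trivbundle(\lambda_j)) \,=\, |\{i : s_\Vcal(i) \geq \mu\}|.\]
In this language, the rank condition \ref{rank inequalities for subbundles} becomes $|\{i : s_\Ecal(i) \geq \mu\}| \leq |\{j : s_\Fcal(j) \geq \mu\}|$ for all $\mu \in \Q$, and slopewise dominance becomes $s_\Ecal(i) \leq s_\Fcal(i)$ for all $i = 1,\ldots,\rk(\Ecal)$ (which in particular forces $\rk(\Ecal)\leq\rk(\Fcal)$).

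For the direction from slopewise dominance to the rank inequality, I would simply note that if $s_\Ecal(i) \leq s_\Fcal(i)$ for all relevant $i$, then for every $\mu$ the identity map embeds $\{i : s_\Ecal(i) \geq \mu\}$ into $\{j : s_\Fcal(j) \geq \mu\}$, giving the cardinality inequality. For the converse, I would first apply the rank inequality with $\mu$ taken smaller than every slope of $\Ecal$ to obtain $\rk(\Ecal) \leq \rk(\Fcal)$. Then for each $i \in \{1,\ldots,\rk(\Ecal)\}$, monotonicity of $s_\Ecal$ yields $|\{i' : s_\Ecal(i') \geq s_\Ecal(i)\}| \geq i$, so the rank inequality applied with $\mu = s_\Ecal(i)$ gives $|\{j : s_\Fcal(j) \geq s_\Ecal(i)\}| \geq i$; monotonicity of $s_\Fcal$ then forces $s_\Fcal(i) \geq s_\Ecal(i)$, which is exactly slopewise dominance.

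The argument uses nothing beyond convexity of HN polygons, so I expect no substantive obstacle; the only real care needed is the bookkeeping identity $\rk(\Vcal^{\geq \mu}) = |\{i : s_\Vcal(i) \geq \mu\}|$ that links the two formulations. Because the statement is combinatorial in nature, this short translate-and-count approach should dispatch the proposition completely.
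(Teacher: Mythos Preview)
Your proof is correct. Note, however, that the paper does not give its own proof of this proposition: it is quoted as \cite[Lemma 4.2.2]{Hong_quotvb} and no argument is reproduced here, so there is nothing in the present paper to compare against. Your translate-and-count argument via the non-increasing slope sequences $s_\Vcal(i)$ is the natural elementary proof of this combinatorial equivalence; the bookkeeping identity $\rk(\Vcal^{\geq\mu}) = |\{i : s_\Vcal(i) \geq \mu\}|$ holds exactly as you describe, and both implications follow cleanly from monotonicity.
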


This equivalence will be extremely useful to us since the notion of slopewise dominance has several implications that are not easy to deduce directly from its equivalent condition \ref{rank inequalities for subbundles} in Theorem \ref{classification of subbundles}.
\begin{lemma}[{\cite[Lemma 4.2.3, Lemma 4.2.4, and Lemma 4.2.5]{Hong_quotvb}}]\label{implications of slopewise dominance}
Let $\Ecal$ and $\Fcal$ be vector bundles on $\schff$ such that $\Fcal$ slopewise dominates $\Ecal$. 
\begin{enumerate}[label=(\arabic*)]
\item\label{nonnegative degree for slopewise dominant pairs} We have an inequality
\[\deg(\Fcal)^\nonneg \geq \deg(\Ecal)^\nonneg.\]

\item\label{existence of max common factor decomp} There exist decompositions
\begin{equation}\label{max common factor decomp}
 \Ecal \simeq \Dcal \oplus \Ecal' \quad\quad \text{ and } \quad\quad \Fcal \simeq \Dcal \oplus \Fcal'
\end{equation}
satisfying the following properties:

\begin{enumerate}[label=(\roman*)]
\item\label{slopewise dominance for complement part} $\Fcal'$ slopewise dominates $\Ecal'$. 
\smallskip

\item\label{inequality for max slopes of complement parts} If $\Ecal' \neq 0$, we have $\mumax(\Fcal')>\mumax(\Ecal')$. 
\smallskip

\item\label{ineqaulities for min slope of common factor} If $\Dcal \neq 0$ and $\Ecal' \neq 0$, we have $\mumin(\Dcal) \geq \mumax(\Fcal') > \mumax(\Ecal')$.
\end{enumerate}
\begin{figure}[H]
\begin{tikzpicture}	

		\coordinate (left) at (0, 0);
		\coordinate (q0) at (1,2.5);
		\coordinate (q1) at (4, 4.5);
		\coordinate (q2) at (6.5, 4.5);
		\coordinate (q3) at (9, 2.5);
		

		\coordinate (p0) at (q0);
		\coordinate (p1) at (2.5, 3.5);
		\coordinate (p2) at (5.5, 3);
		\coordinate (p3) at (7.5, 0.5);
				
		\draw[step=1cm,thick, color=red] (left) -- (p0) --  (p1);
		\draw[step=1cm,thick, color=blue] (p1) -- (q1) -- (q2) -- (q3);
		\draw[step=1cm,thick, color=green] (p1) -- (p2) -- (p3);
		
		\draw [fill] (q0) circle [radius=0.05];		
		\draw [fill] (q1) circle [radius=0.05];		
		\draw [fill] (q2) circle [radius=0.05];		
		\draw [fill] (q3) circle [radius=0.05];		
		\draw [fill] (left) circle [radius=0.05];
		
		\draw [fill] (p0) circle [radius=0.05];		
		\draw [fill] (p1) circle [radius=0.05];		
		\draw [fill] (p2) circle [radius=0.05];		
		\draw [fill] (p3) circle [radius=0.05];		
		

		
		\path (q3) ++(0.8, 0.05) node {$\HN(\Fcal)$};
		\path (p3) ++(0.8, 0.05) node {$\HN(\Ecal)$};
		\path (left) ++(-0.3, -0.05) node {$O$};

		\path (p0) ++(-0.5, -0.6) node {\color{red}$\Dcal$};
		\path (q2) ++(1.5, -0.7) node {\color{blue}$\Fcal'$};
		\path (p2) ++(1, -0.6) node {\color{green}$\Ecal'$};



\end{tikzpicture}
\caption{Illustration of the decompositions \eqref{max common factor decomp} in terms of HN polygons.}\label{illustration of max common factor decomp}
\end{figure}
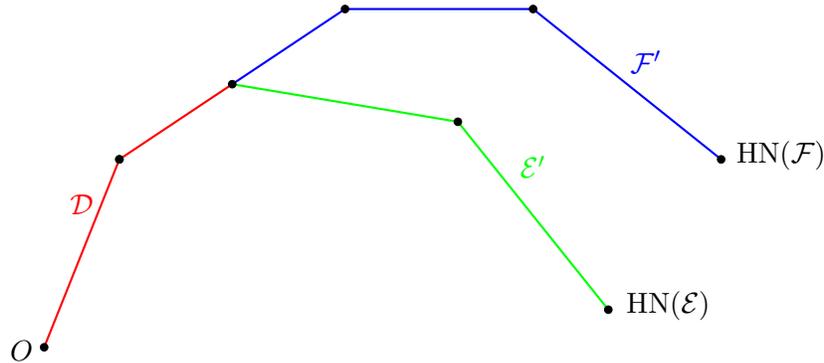
\smallskip

\item\label{duality of slopewise dominance for equal rank case} If $\rk(\Ecal) = \rk(\Fcal)$, then $\Ecal^\vee$ slopewise dominates $\Fcal^\vee$. 
\end{enumerate}
\end{lemma}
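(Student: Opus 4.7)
The plan is to translate each claim into a statement about the two HN polygons and derive it from the pointwise slope inequality that slopewise dominance provides.

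For part (1), I would observe that $\deg(\Vcal)^\nonneg$ is precisely the maximum height attained by $\HN(\Vcal)$ on the interval $[0, \rk(\Vcal)]$: concavity makes this maximum equal to the value at the breakpoint where the slopes transition from non-negative to negative, and that height is exactly the total degree contributed by the non-negative-slope segments. Writing $h_\Ecal$ and $h_\Fcal$ for the piecewise-linear height functions of the two HN polygons, slopewise dominance gives $h_\Fcal' \geq h_\Ecal'$ almost everywhere on $[0, \rk(\Ecal)]$; since both functions vanish at the origin, it follows that $h_\Fcal(x) \geq h_\Ecal(x)$ throughout this interval. Evaluating at the argmax of $h_\Ecal$ yields $\deg(\Fcal)^\nonneg \geq h_\Fcal(x) \geq h_\Ecal(x) = \deg(\Ecal)^\nonneg$.

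For part (2), I would peel off the matching leading slopes of the two HN polygons. Let $\lambda_1^\Ecal \geq \cdots \geq \lambda_{\rk(\Ecal)}^\Ecal$ and $\lambda_1^\Fcal \geq \cdots \geq \lambda_{\rk(\Fcal)}^\Fcal$ denote the slopes listed with multiplicity, so that $\lambda_i^\Vcal$ is the slope of $\HN(\Vcal)$ on $[i-1, i]$. Define $d$ to be the largest integer in $[0, \rk(\Ecal)]$ with $\lambda_i^\Ecal = \lambda_i^\Fcal$ for all $i \leq d$, and set $\Dcal := \bigoplus_{i=1}^d \trivbundle(\lambda_i^\Ecal)$; by Proposition \ref{classification by HN polygon} we may then write $\Ecal \simeq \Dcal \oplus \Ecal'$ and $\Fcal \simeq \Dcal \oplus \Fcal'$ where $\Ecal'$ and $\Fcal'$ correspond to the remaining slopes of $\Ecal$ and $\Fcal$ respectively. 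Property (i) is then immediate, since the slope comparison beyond position $d$ becomes the slopewise dominance of $\Ecal'$ by $\Fcal'$ after an index shift. If $\Ecal' \neq 0$, then $d < \rk(\Ecal) \leq \rk(\Fcal)$, so both polygons have a $(d+1)$-th segment, and maximality of $d$ combined with slopewise dominance forces $\lambda^\Ecal_{d+1} < \lambda^\Fcal_{d+1}$; this is precisely $\mumax(\Ecal') < \mumax(\Fcal')$, yielding (ii). If additionally $\Dcal \neq 0$, concavity of $\HN(\Fcal)$ gives $\mumin(\Dcal) = \lambda^\Fcal_d \geq \lambda^\Fcal_{d+1} = \mumax(\Fcal')$, and chaining this with (ii) produces (iii).

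For part (3), I would use that by Lemma \ref{rank, degree and dual of stable bundles} combined with the HN decomposition, dualizing reverses the order of slopes and negates them: if $\rk(\Ecal) = \rk(\Fcal) = r$, then the slope of $\HN(\Vcal^\vee)$ on $[i-1, i]$ equals $-\lambda^\Vcal_{r-i+1}$ for $\Vcal \in \{\Ecal, \Fcal\}$. Consequently, the claim that $\Ecal^\vee$ slopewise dominates $\Fcal^\vee$ at position $i$, namely $-\lambda^\Fcal_{r-i+1} \leq -\lambda^\Ecal_{r-i+1}$, is equivalent to $\lambda^\Ecal_{r-i+1} \leq \lambda^\Fcal_{r-i+1}$, which is the slopewise dominance hypothesis at position $r-i+1$. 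The only real bookkeeping concern across the three parts lies in (2), where I must verify that the combinatorial choice of $d$ genuinely yields a direct-summand decomposition of both bundles; but this reduces to the uniqueness of HN decompositions in Proposition \ref{classification by HN polygon}.
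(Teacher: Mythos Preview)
The paper does not prove this lemma but cites \cite{Hong_quotvb}; the remark following the statement indicates that the cited proof constructs $\Dcal$ as the common initial portion of the two HN polygons, which is exactly your approach. Your arguments for parts (1) and (3) are correct and standard.

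There is one genuine gap in part (2). Your formula $\Dcal := \bigoplus_{i=1}^d \trivbundle(\lambda_i^\Ecal)$ is ill-defined when the slopes are not integers: the stable bundle $\trivbundle(\lambda)$ has rank equal to the denominator of $\lambda$, so this direct sum will typically have rank strictly larger than $d$. What you really want is for $\Dcal$ to be the bundle whose HN polygon is the restriction of $\HN(\Ecal)$ to $[0,d]$, and for this to make sense you must know that $d$ is a breakpoint of $\HN(\Ecal)$. This does follow from what you wrote: your observation $\lambda_{d+1}^\Ecal < \lambda_{d+1}^\Fcal$ combined with concavity of $\HN(\Fcal)$ gives $\lambda_{d+1}^\Ecal < \lambda_{d+1}^\Fcal \leq \lambda_d^\Fcal = \lambda_d^\Ecal$, so $d$ is indeed a vertex of $\HN(\Ecal)$ and one may set $\Dcal := \Ecal^{>\lambda_{d+1}^\Ecal}$. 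You should then check that $\Fcal \simeq \Dcal \oplus \Fcal'$ for some bundle $\Fcal'$; note that $d$ need \emph{not} be a breakpoint of $\HN(\Fcal)$, but since the multiplicity of each slope $\mu$ among $\lambda_1^\Fcal, \ldots, \lambda_d^\Fcal$ coincides with its multiplicity in $\Ecal$ (hence is divisible by $\rk(\trivbundle(\mu))$), the residual slope multiset does determine a genuine bundle $\Fcal'$. This is a bit more than ``uniqueness of HN decompositions'' and deserves to be spelled out.
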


\begin{remark} The proof of \cite[Lemma 4.2.4]{Hong_quotvb} shows that the bundle $\Dcal$ in \eqref{max common factor decomp} represents the common part of $\HN(\Ecal)$ and $\HN(\Fcal)$, as illustrated in Figure \ref{illustration of max common factor decomp}. 
\end{remark}

\subsection{Formulation of the key inequality}\label{reformulation of statement}$ $

Thus far, by Propositions \ref{subbundles necessary condition}, \ref{reduction on common slopes} and \ref{equivalence of two characterizations for subbundles} we have reduced the proof of Theorem \ref{classification of subbundles} to establishing sufficiency of the condition \ref{slopewise dominance for subbundles} under the additional assumption that $\Ecal$ and $\Fcal$ have no common slopes. 
In this subsection, we will 
further reduce 
it to establishing the following quantitative statement:

\begin{prop}\label{key inequality}
Let $\Ecal$, $\Fcal$ and $\Qcal$ be vector bundles on $\schff$ with the following properties:
\begin{enumerate}[label=(\roman*)]
\item\label{slopewise dominance of F on E} $\Fcal$ slopewise dominates $\Ecal$. 
\smallskip

\item\label{dual slopewise dominance of E on Q} $\Ecal^\vee$ slopewise dominates $\Qcal^\vee$. 
\smallskip

\item\label{slopewise dominance of F on Q} $\Fcal$ slopewise dominates $\Qcal$. 
\smallskip




\item\label{no common slopes for E and F} $\Ecal$ and $\Fcal$ have no common slopes. 
\smallskip

\item\label{rank inequality for E and Q} $\rk(\Qcal) < \rk(\Ecal)$. 
\end{enumerate}
Then we have an inequality
\begin{equation}\label{deg inequality for inj}
\deg(\Ecal^\vee \otimes \Qcal)^\nonneg + \deg(\Qcal^\vee \otimes \Fcal)^\nonneg < \deg(\Ecal^\vee \otimes \Fcal)^\nonneg + \deg(\Qcal^\vee \otimes \Qcal)^\nonneg. 
\end{equation}
\end{prop}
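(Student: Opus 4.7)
The plan is to recast the inequality as a pointwise statement about step functions on $\R$ and then locate a distinguished interval witnessing strict positivity.

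First, for any vector bundles $\Vcal$ and $\Wcal$ on $\schff$, one establishes the identity
\[
\deg(\Vcal^\vee \otimes \Wcal)^\nonneg \;=\; \int_\R \rk(\Vcal^{\leq t}) \cdot \rk(\Wcal^{\geq t}) \, dt,
\]
by decomposing $\Vcal$ and $\Wcal$ into stable summands via Theorem \ref{existence of HN decomp}, applying Lemma \ref{rank, degree and dual of stable bundles} to compute the contribution of each pair, and using the elementary identity $\max(\mu - \lambda, 0) = \int_\R \mathbf{1}_{\lambda \leq t} \, \mathbf{1}_{t \leq \mu} \, dt$. Substituting this into \eqref{deg inequality for inj} and collecting terms, the desired inequality is equivalent to
\[
\int_\R \bigl(\rk(\Ecal^{\leq t}) - \rk(\Qcal^{\leq t})\bigr) \bigl(\rk(\Fcal^{\geq t}) - \rk(\Qcal^{\geq t})\bigr) \, dt \;>\; 0.
\]
Call the two factors $a(t)$ and $c(t)$ respectively. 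By \ref{dual slopewise dominance of E on Q} combined with Lemma \ref{rank and degree of dual bundle}, one has $a(t) \geq 0$ everywhere; by \ref{slopewise dominance of F on Q} one has $c(t) \geq 0$ everywhere. Thus the integrand is pointwise nonnegative, and it suffices to exhibit a nondegenerate interval on which both $a$ and $c$ are strictly positive.

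To locate such an interval, I would first note that \ref{slopewise dominance of F on E} together with \ref{no common slopes for E and F} forces $\mumax(\Fcal) > \mumax(\Ecal)$, while \ref{slopewise dominance of F on Q} forces $\mumax(\Qcal) \leq \mumax(\Fcal)$. In the generic case $\mumax(\Qcal) < \mumax(\Fcal)$, the interval $(\max(\mumax(\Qcal), \mumax(\Ecal)), \mumax(\Fcal))$ is nonempty, and on it $a(t) \geq \rk(\Ecal) - \rk(\Qcal) > 0$ by \ref{rank inequality for E and Q}, while $c(t) = \rk(\Fcal^{\geq t}) > 0$. A symmetric ``bottom end'' argument handles the case $\mumin(\Ecal) < \mumin(\Fcal)$ with $\mumin(\Qcal) > \mumin(\Ecal)$, using the interval $(\mumin(\Ecal), \mumin(\Fcal))$.

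The main obstacle is the degenerate scenario where neither extreme-end argument applies --- $\mumax(\Qcal) = \mumax(\Fcal)$ (with matching top-slope multiplicities and no intervening $\Fcal$-slopes) and simultaneously the analogous bottom degeneracy. Here I plan to invoke the degenerating process on $\Ecal^\vee$ advertised in the introduction, mirroring \cite{Hong_quotvb}: apply Lemma \ref{implications of slopewise dominance}\ref{existence of max common factor decomp} to the pair $(\Qcal^\vee, \Ecal^\vee)$ to peel off the common factor $\Dcal$ of their HN polygons, thereby reducing to a smaller triple $(\Ecal', \Fcal, \Qcal')$ on which the interval arguments succeed. The delicate point, where the ``nontrivial adjustments'' warned about in the introduction arise, is to verify that hypotheses \ref{slopewise dominance of F on E}--\ref{rank inequality for E and Q} propagate through each degeneration step --- these conditions mix $\Ecal$, $\Fcal$, and $\Qcal$ asymmetrically under dualization, so the naive dual of the quotient-bundle argument does not directly apply. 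Establishing a terminating iteration that preserves all five hypotheses is the technical heart of the proof.
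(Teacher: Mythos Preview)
Your integral reformulation is correct and is a genuinely different route from the paper's. The identity
\[
\deg(\Vcal^\vee\otimes\Wcal)^{\geq 0}=\int_{\R}\rk(\Vcal^{\leq t})\,\rk(\Wcal^{\geq t})\,dt
\]
checks out (both sides are bilinear over HN decompositions, and on stable pieces both equal $\rk(\trivbundle(\lambda))\rk(\trivbundle(\mu))\max(\mu-\lambda,0)$), and the factorisation of $c_{\Ecal,\Fcal}(\Qcal)$ as $\int a(t)c(t)\,dt$ with $a,c\geq 0$ is exactly right. The paper does \emph{not} do this: it first reduces to integer slopes, then to $\rk(\Qcal)=\rk(\Ecal)-1$ and $\mumax(\Ecal)=0$, and then runs a degenerating sequence $\Ecal=\Ecal_0,\Ecal_1,\dots,\Ecal_r\simeq\Qcal$ built from maximal slope reductions on the dual side, showing $c_{\Ecal_i,\Fcal}(\Qcal)$ is non-increasing with a strict drop in the first two steps. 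Your reformulation bypasses all of this machinery.

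Where your proposal sells itself short is the ``degenerate scenario''. You plan to fall back on the degenerating process there, but in fact your integral framework already handles it directly, in one paragraph. Here is the missing argument. Suppose $\int ac=0$; then $ac=0$ everywhere (step functions). Since $a(t)=\rk(\Ecal)-\rk(\Qcal^{\leq t})>0$ for all $t\geq\mumax(\Ecal)$ by \ref{rank inequality for E and Q}, the set $\{a>0\}$ contains an unbounded component $[\ell,\infty)$; just below $\ell$ one has $a=0$, hence $\rk(\Ecal^{<\ell})=\rk(\Qcal^{<\ell})$, while the jump at $\ell$ forces $\ell$ to be a slope of $\Ecal$. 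On $(\ell,\infty)$ one has $c=0$, giving $\rk(\Fcal^{>\ell})=\rk(\Qcal^{>\ell})$. Now \ref{no common slopes for E and F} is the key: since $\ell$ is a slope of $\Ecal$ it is not a slope of $\Fcal$, so $\rk(\Fcal^{\geq\ell})=\rk(\Fcal^{>\ell})=\rk(\Qcal^{>\ell})\leq\rk(\Qcal^{\geq\ell})$. Combining with \ref{slopewise dominance of F on E} gives $\rk(\Ecal^{\geq\ell})\leq\rk(\Fcal^{\geq\ell})\leq\rk(\Qcal^{\geq\ell})$, and adding $\rk(\Ecal^{<\ell})=\rk(\Qcal^{<\ell})$ yields $\rk(\Ecal)\leq\rk(\Qcal)$, contradicting \ref{rank inequality for E and Q}. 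So no degenerating process is needed at all; your approach gives a proof that is dramatically shorter than the paper's, and it makes transparent exactly how each hypothesis \ref{slopewise dominance of F on E}--\ref{rank inequality for E and Q} enters. What the paper's construction buys in exchange for its length is an explicit interpolation between $\Ecal$ and $\Qcal$, which may be of independent interest but is not needed for the inequality itself.
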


\begin{remark}
This is the analogue of \cite[Proposition 4.3.5]{Hong_quotvb} in our situation. 
In fact, the statement of Proposition \ref{key inequality} and the statement of \cite[Proposition 4.3.5]{Hong_quotvb} have several notable similarities as follows:
\begin{enumerate}[label=(\arabic*)]
\item The inequalities considered in both statements are almost identical. 
\smallskip

\item By Proposition \ref{equivalence of two characterizations for subbundles}, the conditions \ref{dual slopewise dominance of E on Q} and \ref{slopewise dominance of F on Q} in Proposition \ref{key inequality} are almost equivalent to the corresponding conditions (ii) and (iii) in \cite[Proposition 4.3.5]{Hong_quotvb}
\smallskip

\item Both statements have the condition \ref{no common slopes for E and F} in common. 
\smallskip

\item The condition \ref{slopewise dominance of F on E} in each statement is precisely the condition on $\Ecal$ and $\Fcal$ in the corresponding main theorem in each context.
\end{enumerate}


On the other hand, the statement of Proposition \ref{key inequality} contains some changes from the statement of \cite[Proposition 4.3.5]{Hong_quotvb} as follows:
\begin{enumerate}[label=(\arabic*)]
\item The condition \ref{dual slopewise dominance of E on Q} in Proposition \ref{key inequality} does not have an additional ``equality condition" that appears in the condition (ii) in \cite[Proposition 4.3.5]{Hong_quotvb}.
\smallskip

\item\label{additional condition on key inequality} Proposition \ref{key inequality} has an additional condition \ref{rank inequality for E and Q} which is not present in \cite[Proposition 4.3.5]{Hong_quotvb}. 
\smallskip

\item The inequality \eqref{deg inequality for inj} in Proposition \ref{key inequality} is strict whereas the inequality considered in \cite[Proposition 4.3.5]{Hong_quotvb} is not. 
\end{enumerate}
Here the essential feature is the additional condition \ref{rank inequality for E and Q}, as the other two features are consequences of this feature. 

It is relatively easy to see why the strictness of \eqref{deg inequality for inj} is a consequence of the additional condition \ref{rank inequality for E and Q}. In fact, if we remove the condition \ref{rank inequality for E and Q} from Proposition \ref{key inequality}, then both sides of \eqref{deg inequality for inj} can be equal for many choices of $\Ecal, \Fcal$ and $\Qcal$. As an example, the reader can quickly check that both sides of \eqref{deg inequality for inj} are equal whenever $\Ecal = \Qcal$. There are also other choices, such as
\[\Ecal = \trivbundle, \quad\quad \Fcal = \trivbundle(1) \oplus \trivbundle(-1), \quad\quad \Qcal = \trivbundle(1),\]
for which both sides of \eqref{deg inequality for inj} are equal.

Let us now explain why the absence of the ``equality condition" in the condition \ref{dual slopewise dominance of E on Q} is a consequence of the condition \ref{rank inequality for E and Q}. As Lemma \ref{basic properties for candidate images} indicates, for our purpose we only need to establish the inequality \eqref{deg inequality for inj} when $\Qcal$ is a quotient of $\Ecal$ and a subbundle of $\Fcal$. The ``equality condition" in the condition (ii) of \cite[Proposition 4.3.5]{Hong_quotvb} is a necessary condition for $\Qcal$ to be a quotient of $\Ecal$. In our context, the condition \ref{rank inequality for E and Q} replaces this equality condition as a necessary condition for $\Qcal$ to be a quotient of $\Ecal$ with $\Qcal \neq \Ecal$. In fact, if we added the ``equality condition" to the condition \ref{dual slopewise dominance of E on Q} in Proposition \ref{key inequality}, the case $\rk(\Qcal) = \rk(\Ecal)$ would degenerate to the case $\Qcal = \Ecal$. 

Our discussion in the previous two paragraphs suggests that it is possible to state Proposition \ref{key inequality} without having all these new features by just adding the ``equality condition" to the condition \ref{dual slopewise dominance of E on Q}. However, we still want to have these features, as these features will notably simplify a number of our reduction arguments in the proof of Proposition \ref{key inequality}. 

\end{remark}

For the desired reduction, we need the following dual counterpart of Proposition \ref{subbundles necessary condition} and Proposition \ref{equivalence of two characterizations for subbundles}.

\begin{prop}\label{quotient bundles necessary condition}
Let $\Ecal$ be a vector bundle on $\schff$, and let $\Qcal$ be a quotient bundle of $\Ecal$. Then $\Ecal^\vee$ slopewise dominates $\Qcal^\vee$. 
\end{prop}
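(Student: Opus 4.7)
The plan is to reduce this to the subbundle case via duality and then invoke the results already established in the excerpt. Let $\Ecal \surj \Qcal$ be the given surjection. Dualizing turns this into an injective bundle map $\Qcal^\vee \inj \Ecal^\vee$, so $\Qcal^\vee$ is a subbundle of $\Ecal^\vee$.

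Now I would directly apply Proposition \ref{subbundles necessary condition} to the pair $(\Qcal^\vee, \Ecal^\vee)$. This yields the rank inequalities
\[
\rk((\Qcal^\vee)^{\geq \mu}) \leq \rk((\Ecal^\vee)^{\geq \mu}) \quad \text{for every } \mu \in \Q,
\]
which is precisely condition \ref{rank inequalities for subbundles} of Theorem \ref{classification of subbundles} applied to the pair $(\Qcal^\vee, \Ecal^\vee)$. By Proposition \ref{equivalence of two characterizations for subbundles}, this is equivalent to $\Ecal^\vee$ slopewise dominating $\Qcal^\vee$, as desired.

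There is essentially no obstacle here: the proposition is a formal dual of Proposition \ref{subbundles necessary condition} combined with the equivalence of the two characterizations. The only thing worth double-checking is that dualization of bundles on $\schff$ is an exact contravariant functor, so that a surjection dualizes to an injection of bundles (not merely an injection of sheaves), which is immediate from the fact that $\schff$ is a regular Noetherian scheme of Krull dimension $1$ (Proposition \ref{FF curve is a curve}) so every short exact sequence of vector bundles is locally split.
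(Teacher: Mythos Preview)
Your proof is correct and follows essentially the same approach as the paper: dualize the surjection to obtain $\Qcal^\vee$ as a subbundle of $\Ecal^\vee$, then apply Proposition \ref{subbundles necessary condition} and Proposition \ref{equivalence of two characterizations for subbundles}. The paper's proof is slightly terser, but your added remark about why dualization yields an injection of bundles is a reasonable elaboration.
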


\begin{proof}
Since $\Qcal$ is a quotient bundle of $\Ecal$, its dual bundle $\Qcal^\vee$ is a subbundle of $\Ecal^\vee$. We thus have slopewise dominance of $\Ecal^\vee$ on $\Qcal^\vee$ by Proposition \ref{subbundles necessary condition} and Proposition \ref{equivalence of two characterizations for subbundles}.
\end{proof}

The following lemma relates some conditions in Proposition \ref{key inequality} to the construction that we introduced in Definition \ref{Hom with specified image}. 

\begin{lemma}\label{basic properties for candidate images}
Let $\Ecal, \Fcal$ and $\Qcal$ be vector bundles on $\schff$ such that the diamond $\Hom(\Ecal, \Fcal)_\Qcal$ (defined in Definition \ref{Hom with specified image}) is nonempty. 
\begin{enumerate}[label=(\arabic*)]
\item\label{Q as a quotient and a subbundle} $\Qcal$ is a quotient bundle of $\Ecal$ and a subbundle of $\Fcal$. 

\item\label{slopewise dominance conditions for Q} The bundles $\Ecal, \Fcal$ and $\Qcal$ satisfy the conditions \ref{dual slopewise dominance of E on Q} and \ref{slopewise dominance of F on Q} in Proposition \ref{key inequality}. 
\smallskip

\item\label{equal rank condition for Q} If $\Qcal \not\simeq \Ecal$, then $\rk(\Qcal) < \rk(\Ecal)$. 
\end{enumerate}
\end{lemma}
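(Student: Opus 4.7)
The plan is to derive all three claims by passing from the nonemptiness of the diamond $\Hom(\Ecal,\Fcal)_\Qcal$ to an actual pair of bundle maps over $\schff$, i.e.\ at an $\algclosedperfdfield$-point, and then invoking the already-established necessity results. For (1) I would reason as follows. By Definition \ref{Hom with specified image}, $|\Hom(\Ecal,\Fcal)_\Qcal|$ is the image of the composition map with source $\Surj(\Ecal,\Qcal) \times_{\Spd\,\algclosedperfdfield} \Inj(\Qcal,\Fcal)$; since this image is nonempty, so is the fiber product, and hence so are both factors $\Surj(\Ecal,\Qcal)$ and $\Inj(\Qcal,\Fcal)$. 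By Proposition \ref{moduli of bundle maps fund facts}\,(3) each of these is an open subfunctor of the corresponding $\Hom$ diamond, so Proposition \ref{moduli of bundle maps fund facts}\,(2) produces an $\algclosedperfdfield$-point of each. These points supply an honest surjective bundle map $\Ecal \twoheadrightarrow \Qcal$ and an honest injective bundle map $\Qcal \hookrightarrow \Fcal$ over $\schff$, which is precisely what (1) asserts.

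Given (1), statement (2) becomes automatic: the injection $\Qcal \hookrightarrow \Fcal$ forces slopewise dominance of $\Fcal$ over $\Qcal$ by Propositions \ref{subbundles necessary condition} and \ref{equivalence of two characterizations for subbundles}, establishing condition \ref{slopewise dominance of F on Q} in Proposition \ref{key inequality}, while the surjection $\Ecal \twoheadrightarrow \Qcal$ forces slopewise dominance of $\Ecal^\vee$ over $\Qcal^\vee$ by Proposition \ref{quotient bundles necessary condition}, establishing condition \ref{dual slopewise dominance of E on Q}.

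For (3) I would again use the surjection $\Ecal \twoheadrightarrow \Qcal$ from (1) and examine its kernel $\Kcal$. As a subsheaf of the vector bundle $\Ecal$ on the regular Noetherian scheme $\schff$ of Krull dimension $1$ (Proposition \ref{FF curve is a curve}), the sheaf $\Kcal$ is coherent and torsion-free, hence itself a vector bundle, and $\rk(\Kcal) = \rk(\Ecal) - \rk(\Qcal)$. If the equality $\rk(\Qcal) = \rk(\Ecal)$ held, then $\Kcal$ would be a rank-zero vector bundle and therefore zero, forcing the surjection to be an isomorphism and contradicting $\Qcal \not\simeq \Ecal$; hence $\rk(\Qcal) < \rk(\Ecal)$. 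The only step that requires any real care is the initial extraction of genuine $\algclosedperfdfield$-points, and this is cleanly handled by the openness assertion in Proposition \ref{moduli of bundle maps fund facts}\,(3) together with the existence assertion in Proposition \ref{moduli of bundle maps fund facts}\,(2); all remaining pieces are immediate citations.
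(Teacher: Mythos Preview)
Your proof is correct and follows essentially the same approach as the paper: extract $\algclosedperfdfield$-points of $\Surj(\Ecal,\Qcal)$ and $\Inj(\Qcal,\Fcal)$ via Proposition~\ref{moduli of bundle maps fund facts}, then cite the necessity results for (2) and analyze the kernel of the surjection for (3). Your treatment of (3) is slightly more detailed in justifying that the kernel is a vector bundle, but the argument is otherwise identical.
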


\begin{proof}
By definition, $\Hom(\Ecal, \Fcal)_\Qcal$ is the image of the map of diamonds
\[\Surj(\Ecal,\Qcal) \times_{\Spd\,\algclosedperfdfield} \Inj(\Qcal,\Fcal) \to \Hom(\Ecal,\Fcal)\]
induced by composition of bundle maps. Nonemptiness of $\Hom(\Ecal, \Fcal)_\Qcal$ therefore implies that both $\Surj(\Ecal, \Fcal)$ and $\Inj(\Ecal, \Fcal)$ are nonempty. By Proposition \ref{moduli of bundle maps fund facts}, we find that both $\Surj(\Ecal, \Qcal)$ and $\Inj(\Ecal, \Qcal)$ have $\algclosedperfdfield$-points, which precisely amounts to existence of a surjective bundle map $\Ecal \surj \Qcal$ and an injective bundle map $\Qcal \inj \Fcal$ as asserted in \ref{Q as a quotient and a subbundle}. Furthermore, we deduce \ref{slopewise dominance conditions for Q} from \ref{Q as a quotient and a subbundle} by Propositions \ref{subbundles necessary condition}, \ref{equivalence of two characterizations for subbundles}, and \ref{quotient bundles necessary condition}. 

Let us now assume that $\Qcal \not\simeq \Ecal$. As we already saw in the preceding paragraph, there exists a surjective map $\Ecal \surj \Qcal$. Its kernel $\Kcal$ is not trivial since the map is not an isomorphism by our assumption. We thus find
\[ \rk(\Qcal) = \rk(\Ecal) - \rk(\Kcal) < \rk(\Ecal),\]
thereby establishing \ref{equal rank condition for Q}. 
\end{proof}

With Lemma \ref{basic properties for candidate images}, we can explain why establishing Proposition \ref{key inequality} finishes the proof of Theorem \ref{classification of subbundles}. 

\begin{prop}\label{reduction to key inequality}
Proposition \ref{key inequality} implies sufficiency of the condition \ref{slopewise dominance for subbundles} in Theorem \ref{classification of subbundles} under the additional assumption that $\Ecal$ and $\Fcal$ have no common slopes. 
\end{prop}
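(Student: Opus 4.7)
The plan is to reduce the sufficiency of condition \ref{slopewise dominance for subbundles} (under the no-common-slopes assumption) to the nonemptiness of the diamond $\Hom(\Ecal,\Fcal)_\Ecal$, and then to deduce that nonemptiness from Proposition \ref{key inequality} by a dimension count on the stratification of $|\Hom(\Ecal,\Fcal)|$ by image type. First I would observe that exhibiting $\Ecal$ as a subbundle of $\Fcal$ is equivalent to producing an $\algclosedperfdfield$-point of $\Hom(\Ecal,\Fcal)_\Ecal$: such a point factors as a composition $\Ecal\twoheadrightarrow\Ecal\hookrightarrow\Fcal$, and the first arrow is forced to be an automorphism because a bundle surjection between bundles of equal rank has trivial (rank-zero, locally free) kernel.

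Next I would stratify $|\Hom(\Ecal,\Fcal)|$ as the disjoint union of the subsets $|\Hom(\Ecal,\Fcal)_\Qcal|$ indexed by isomorphism classes of vector bundles $\Qcal$ on $\schff$. By Lemma \ref{basic properties for candidate images}\ref{Q as a quotient and a subbundle}, each $\Qcal$ with nonempty stratum is both a quotient bundle of $\Ecal$ and a subbundle of $\Fcal$; combined with Propositions \ref{subbundles necessary condition}, \ref{equivalence of two characterizations for subbundles} and \ref{quotient bundles necessary condition}, these constraints pin the HN polygon of $\Qcal$ to a finite list, so the stratification is finite. Proposition \ref{moduli of bundle maps fund facts} then gives that $|\Hom(\Ecal,\Fcal)|$ is equidimensional of dimension
\[ d := \deg(\Ecal^\vee \otimes \Fcal)^\nonneg, \]
while Proposition \ref{properties of Hom with specified image}\ref{dim formula for Hom with specified image} supplies the three-term formula for $\dim|\Hom(\Ecal,\Fcal)_\Qcal|$ appearing in \eqref{deg inequality for inj}.

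For every $\Qcal\not\simeq\Ecal$ with $\Hom(\Ecal,\Fcal)_\Qcal$ nonempty, Lemma \ref{basic properties for candidate images} supplies conditions \ref{dual slopewise dominance of E on Q}, \ref{slopewise dominance of F on Q} and \ref{rank inequality for E and Q} of Proposition \ref{key inequality}, while conditions \ref{slopewise dominance of F on E} and \ref{no common slopes for E and F} are built into the reduced setting. Proposition \ref{key inequality} therefore applies and yields the strict inequality $\dim|\Hom(\Ecal,\Fcal)_\Qcal| < d$. Finiteness of the stratification then forces the remaining stratum $|\Hom(\Ecal,\Fcal)_\Ecal|$ to be nonempty; stability under generalization (Proposition \ref{properties of Hom with specified image}\ref{Hom with specified image behave nicely}), together with the fact that its complement is a finite union of specialization-stable subsets, ensures that $\Hom(\Ecal,\Fcal)_\Ecal$ contains a nonempty open subfunctor of $\Hom(\Ecal,\Fcal)$, so Proposition \ref{moduli of bundle maps fund facts}(2) delivers the desired $\algclosedperfdfield$-point. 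The substantive work is concentrated entirely in Proposition \ref{key inequality}; the present reduction is routine once that strict inequality is in hand.
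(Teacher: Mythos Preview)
Your approach is the same as the paper's: stratify $|\Hom(\Ecal,\Fcal)|$ by image type and use the strict inequality from Proposition~\ref{key inequality} (via Lemma~\ref{basic properties for candidate images}) together with the dimension formulas to force the $\Ecal$-stratum to be nonempty. Two remarks are in order.

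First, the finiteness of the stratification is correct but unnecessary. Because the strata are stable under both generalization and specialization (Proposition~\ref{properties of Hom with specified image}\ref{Hom with specified image behave nicely}), every chain of specializations in $|\Hom(\Ecal,\Fcal)|$ lies in a single stratum, so $\dim|\Hom(\Ecal,\Fcal)|=\sup_\Qcal\dim|\Hom(\Ecal,\Fcal)_\Qcal|$ regardless of how many strata there are; since the dimensions are integers, a strict upper bound $<d$ on each stratum already gives $\sup\le d-1<d$. This is exactly how the paper argues, by contradiction, without ever counting strata.

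Second, your final step has a gap. The assertion that ``generalization-stable with complement a finite union of specialization-stable pieces'' yields a nonempty open subfunctor is not valid: these two hypotheses are equivalent (each says the complement is specialization-stable), and generalization-stability alone does not produce opens in a locally spectral space (think of a single non-isolated closed point in a Boolean spectral space). The conclusion you want is nonetheless true, for a reason you essentially already noted in your first paragraph: a map in $\Hom(\Ecal,\Fcal)_\Ecal$ is precisely a fiberwise injection, so $\Hom(\Ecal,\Fcal)_\Ecal=\Inj(\Ecal,\Fcal)$, and this is open by Proposition~\ref{moduli of bundle maps fund facts}(3). The paper sidesteps the issue altogether: once $\Hom(\Ecal,\Fcal)_\Ecal$ is nonempty, Lemma~\ref{basic properties for candidate images}\ref{Q as a quotient and a subbundle} (whose proof already passes through Proposition~\ref{moduli of bundle maps fund facts} to extract an $\algclosedperfdfield$-point of $\Inj(\Ecal,\Fcal)$) immediately gives that $\Ecal$ is a subbundle of $\Fcal$.
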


\begin{proof}
Let $\Ecal$ and $\Fcal$ be vector bundles on $\schff$ with no common slopes such that $\Fcal$ slopewise dominates $\Ecal$. 
Let $S$ be the set of (isomorphism classes of) vector bundles $\Qcal$ on $\schff$ such that $\Hom(\Ecal, \Fcal)_\Qcal$ is nonempty. 
We wish to prove that $\Ecal$ is a subbundle of $\Fcal$, assuming Proposition \ref{key inequality}. 
By Lemma \ref{basic properties for candidate images}, it is enough to show $\Ecal \in S$.

Suppose for contradiction that $\Ecal \notin S$. By Proposition \ref{key inequality} and Lemma \ref{basic properties for candidate images}, every $\Qcal \in S$ should satisfy the strict inequality
\[\deg(\Ecal^\vee \otimes \Qcal)^\nonneg + \deg(\Qcal^\vee \otimes \Fcal)^\nonneg < \deg(\Ecal^\vee \otimes \Fcal)^\nonneg + \deg(\Qcal^\vee \otimes \Qcal)^\nonneg.\]
Now the dimension formulas in Proposition \ref{moduli of bundle maps fund facts} and Proposition \ref{properties of Hom with specified image} imply that for every $\Qcal \in S$ we have
\begin{equation}\label{strict dimension inequality for Hom strata} 
\dim |\Hom(\Ecal, \Fcal)_\Qcal| < \dim \Hom(\Ecal, \Fcal).
\end{equation}
On the other hand, we have a decomposition
\[|\Hom(\Ecal, \Fcal)| = \coprod_{\Qcal \in S}|\Hom(\Ecal, \Fcal)_\Qcal|.\]
We thus use Proposition \ref{properties of Hom with specified image} and \eqref{strict dimension inequality for Hom strata} to find
\[\dim |\Hom(\Ecal, \Fcal)| = \sup_{\Qcal \in S} \dim |\Hom(\Ecal, \Fcal)_\Qcal| < \dim |\Hom(\Ecal, \Fcal)|,\]
thereby obtaining the desired contradiction. 
\end{proof}

\subsection{Reduction on slopes and ranks}\label{reduction of ranks}$ $

Our goal for the rest of this paper is to establish Proposition \ref{key inequality}. For our convenience, we introduce the following notation: 
\begin{defn}\label{definition of cEF(Q)}
For arbitrary vector bundles $\Ecal, \Fcal$ and $\Qcal$ on $\schff$, we define
\[c_{\Ecal, \Fcal}(\Qcal) := \deg(\Ecal^\vee \otimes \Fcal)^\nonneg + \deg(\Qcal^\vee \otimes \Qcal)^\nonneg - \deg(\Ecal^\vee \otimes \Qcal)^\nonneg -\deg(\Qcal^\vee \otimes \Fcal)^\nonneg.\]
Note that the inequality \eqref{deg inequality for inj} in Proposition \ref{key inequality} can be written as $c_{\Ecal, \Fcal}(\Qcal) > 0$.
\end{defn}

In this subsection, we reduce the proof of Proposition \ref{key inequality} to the case where the following additional conditions are satisfied:
\begin{enumerate}[label=(\roman*), start=5]
\item[\refstepcounter{enumi}\theenumi'] $\rk(\Qcal) = \rk(\Ecal) - 1$. 
\smallskip

\item all slopes of $\Ecal, \Fcal$ and $\Qcal$ are integers. 

\smallskip

\item $\mumax(\Ecal) = 0$. 
\end{enumerate}
The following lemma will be crucial for this task. 

\begin{lemma}[{\cite[Lemma 3.2.7 and Lemma 3.2.8]{Hong_quotvb}}]\label{degree computing lemmas}
Let $\Vcal$ and $\Wcal$ be arbitrary vector bundles on $\schff$. 
\begin{enumerate}[label = (\arabic*)]
\item For vector bundles $\vertstretch{\Vcal}$ and $\vertstretch{\Wcal}$ on $\schff$ whose HN polygons are obtained by vertically stretching $\HN(\Vcal)$ and $\HN(\Wcal)$ by a positive integer factor $C$, we have
\[ \deg(\vertstretch{\Vcal}^\vee \otimes \vertstretch{\Wcal})^\nonneg = C \cdot \deg(\Vcal^\vee \otimes \Wcal)^\nonneg.\]

\item For vector bundles $\Vcal(\lambda) := \Vcal \otimes \trivbundle(\lambda)$ and $\Wcal(\lambda) := \Vcal \otimes \trivbundle(\lambda)$, we have
\[\deg(\Vcal(\lambda)^\vee \otimes \Wcal(\lambda))^\nonneg = \rk(\trivbundle(\lambda))^2 \cdot \deg(\Vcal^\vee \otimes \Wcal)^\nonneg.\]
\end{enumerate}
\end{lemma}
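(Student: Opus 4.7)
The plan is to derive both identities from a single explicit combinatorial formula for the non-negative part of $\deg(\Vcal^\vee \otimes \Wcal)$ in terms of HN data. Writing the HN decompositions as $\Vcal \simeq \bigoplus_i \trivbundle(\lambda_i)^{\oplus m_i}$ and $\Wcal \simeq \bigoplus_j \trivbundle(\mu_j)^{\oplus n_j}$, I would invoke the standard fact (part of the classification in \cite{FF_curve, Kedlaya_slopefiltrations} underlying Theorem \ref{existence of HN decomp}) that each summand $\trivbundle(\lambda_i)^\vee \otimes \trivbundle(\mu_j) \simeq \trivbundle(-\lambda_i) \otimes \trivbundle(\mu_j)$ is semistable of slope $\mu_j - \lambda_i$ with rank $\rk(\trivbundle(\lambda_i)) \cdot \rk(\trivbundle(\mu_j))$. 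Setting $x_i := m_i \rk(\trivbundle(\lambda_i))$ and $y_j := n_j \rk(\trivbundle(\mu_j))$ for the horizontal lengths of the respective HN pieces, only the summands with $\mu_j \geq \lambda_i$ contribute to the non-negative part, so
\[
\deg(\Vcal^\vee \otimes \Wcal)^{\nonneg} \;=\; \sum_{\mu_j \geq \lambda_i} x_i\, y_j\, (\mu_j - \lambda_i).
\]

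For part (1), the key observation is that vertically stretching the HN polygons of $\Vcal$ and $\Wcal$ by a positive integer factor $C$ preserves the horizontal lengths $x_i$ and $y_j$ while multiplying each slope $\lambda_i$ and $\mu_j$ by $C$. Since $C > 0$, the sign conditions $\mu_j \geq \lambda_i$ transfer verbatim, and each individual contribution $x_i y_j (\mu_j - \lambda_i)$ scales by exactly $C$. The identity then falls out of the formula above by direct substitution, with no further input.

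For part (2), I would proceed more structurally. The bundle $\trivbundle(\lambda)^\vee \otimes \trivbundle(\lambda)$ is semistable of slope $0$ with rank $\rk(\trivbundle(\lambda))^2$, so Theorem \ref{existence of HN decomp} gives an isomorphism $\trivbundle(\lambda)^\vee \otimes \trivbundle(\lambda) \simeq \trivbundle^{\oplus \rk(\trivbundle(\lambda))^2}$. Consequently,
\[
\Vcal(\lambda)^\vee \otimes \Wcal(\lambda) \;\simeq\; \Vcal^\vee \otimes \Wcal \otimes \bigl(\trivbundle(\lambda)^\vee \otimes \trivbundle(\lambda)\bigr) \;\simeq\; (\Vcal^\vee \otimes \Wcal)^{\oplus \rk(\trivbundle(\lambda))^2},
\]
and applying $\deg(-)^{\nonneg}$ to both sides yields the claim immediately.

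The only nontrivial input in either step is the structural fact that tensor products of the stable bundles $\trivbundle(\lambda)$ and $\trivbundle(\mu)$ decompose as described, which is already built into Theorem \ref{existence of HN decomp}. I do not anticipate a genuine obstacle beyond bookkeeping carefully the distinction between multiplicities $m_i$ and horizontal lengths $x_i = m_i \rk(\trivbundle(\lambda_i))$, since the HN combinatorics is what drives both identities.
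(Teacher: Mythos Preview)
The paper does not give its own proof of this lemma; it is quoted verbatim from \cite[Lemmas 3.2.7 and 3.2.8]{Hong_quotvb}. So there is no in-paper argument to compare against. Your approach is correct, and the combinatorial formula you isolate,
\[
\deg(\Vcal^\vee \otimes \Wcal)^{\nonneg} = \sum_{\mu_j \geq \lambda_i} x_i\, y_j\, (\mu_j - \lambda_i),
\]
is exactly the cross-product identity recorded later in the paper as Lemma~\ref{cross product representation of degrees} (also cited, from \cite{Arizona_extvb}): with $v_i = (x_i, x_i\lambda_i)$ and $w_j = (y_j, y_j\mu_j)$ one has $v_i \times w_j = x_i y_j(\mu_j - \lambda_i)$. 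Your derivation of part~(1) from this formula is the expected one and matches what the cited reference does.

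For part~(2), your structural argument via $\trivbundle(\lambda)^\vee \otimes \trivbundle(\lambda) \simeq \trivbundle^{\oplus \rk(\trivbundle(\lambda))^2}$ is slicker than running the combinatorial formula again: it bypasses any slope bookkeeping and reduces the claim to the obvious additivity of $\deg(-)^{\nonneg}$ under direct sums. The cited proof in \cite{Hong_quotvb} instead works directly with the formula, tracking how the HN data of $\Vcal(\lambda)$ relates to that of $\Vcal$. Both are fine; yours is shorter and uses only the semistability of tensor products of stable bundles, which is indeed part of the classification package.
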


Let us now carry out the proposed reduction. 

\begin{prop}\label{key inequality reduction to integer slopes}
We may prove Proposition \ref{key inequality} under the assumption that all slopes of $\Ecal$, $\Fcal$ and $\Qcal$ are integers. 
\end{prop}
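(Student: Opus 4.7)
The plan is to pass to a common vertical rescaling of $\Ecal$, $\Fcal$ and $\Qcal$ that clears all denominators in the HN slopes, then use Lemma~\ref{degree computing lemmas}(1) to transport the desired strict inequality between the original bundles and the rescaled ones. Concretely, let $C$ be the least common multiple of the denominators of the HN slopes of $\Ecal$, $\Fcal$, and $\Qcal$ (each slope written in lowest terms). For any vector bundle $\Vcal$ on $\schff$ whose HN decomposition reads $\Vcal \simeq \bigoplus_i \trivbundle(r_i/s_i)^{\oplus m_i}$ with $\gcd(r_i,s_i)=1$ and every $s_i \mid C$, I define its \emph{vertical $C$-stretch} by
\[
\vertstretch{\Vcal} := \bigoplus_i \trivbundle\!\left(C r_i / s_i\right)^{\oplus m_i s_i}.
\]
By construction $\vertstretch{\Vcal}$ has only integer slopes (equal to $C$ times the original slopes), satisfies $\rk(\vertstretch{\Vcal}) = \rk(\Vcal)$, has HN polygon obtained from $\HN(\Vcal)$ by scaling vertically by $C$, and commutes with dualization: $(\vertstretch{\Vcal})^\vee = \vertstretch{\Vcal^\vee}$.

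The next step is to check that $\vertstretch{\Ecal}, \vertstretch{\Fcal}, \vertstretch{\Qcal}$ still satisfy hypotheses \ref{slopewise dominance of F on E}--\ref{rank inequality for E and Q} of Proposition~\ref{key inequality}. The three slopewise dominance conditions \ref{slopewise dominance of F on E}, \ref{dual slopewise dominance of E on Q}, and \ref{slopewise dominance of F on Q} are preserved because vertical $C$-stretching multiplies every HN slope by the same positive constant $C$, hence preserves every slope comparison on unit intervals $[i-1,i]$; the identity $(\vertstretch{\Vcal})^\vee = \vertstretch{\Vcal^\vee}$ handles the dual version appearing in \ref{dual slopewise dominance of E on Q}. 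The no-common-slopes condition \ref{no common slopes for E and F} transfers because multiplication by $C$ is injective on $\Q$, so disjoint slope sets remain disjoint. The rank condition \ref{rank inequality for E and Q} transfers because ranks are unchanged under vertical stretching.

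Finally, Lemma~\ref{degree computing lemmas}(1) applied term-by-term to Definition~\ref{definition of cEF(Q)} gives
\[
c_{\vertstretch{\Ecal}, \vertstretch{\Fcal}}(\vertstretch{\Qcal}) \;=\; C \cdot c_{\Ecal, \Fcal}(\Qcal),
\]
so the strict inequality $c_{\Ecal,\Fcal}(\Qcal) > 0$ is equivalent to $c_{\vertstretch{\Ecal},\vertstretch{\Fcal}}(\vertstretch{\Qcal}) > 0$ since $C > 0$. Consequently, if Proposition~\ref{key inequality} is known when all slopes are integers, applying it to the triple $(\vertstretch{\Ecal}, \vertstretch{\Fcal}, \vertstretch{\Qcal})$ recovers the general case.

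The argument is essentially formal bookkeeping and presents no serious obstacle; the only point that requires a moment of care is the compatibility $(\vertstretch{\Vcal})^\vee = \vertstretch{\Vcal^\vee}$, which is needed so that the dual slopewise dominance condition \ref{dual slopewise dominance of E on Q} survives the reduction.
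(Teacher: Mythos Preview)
Your proposal is correct and follows essentially the same approach as the paper: choose a common multiple $C$ of the slope denominators, vertically stretch all three bundles by $C$, verify that conditions \ref{slopewise dominance of F on E}--\ref{rank inequality for E and Q} persist, and invoke Lemma~\ref{degree computing lemmas}(1) to obtain $c_{\vertstretch{\Ecal},\vertstretch{\Fcal}}(\vertstretch{\Qcal}) = C \cdot c_{\Ecal,\Fcal}(\Qcal)$. Your write-up is slightly more explicit than the paper's (giving a concrete formula for the stretched bundle and singling out the compatibility $(\vertstretch{\Vcal})^\vee = \vertstretch{\Vcal^\vee}$), but the argument is the same.
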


\begin{proof}
Let $\Ecal, \Fcal$ and $\Qcal$ be as in the statement of Proposition \ref{key inequality}. Take $C$ to be a common multiple of all denominators of the slopes in $\HN(\Ecal), \HN(\Fcal)$ and $\HN(\Qcal)$, and define $\vertstretch{\Ecal}, \vertstretch{\Fcal}$ and $\vertstretch{\Qcal}$ to be vector bundles on $\schff$ whose HN polygons are obtained by vertically stretching $\HN(\Ecal), \HN(\Fcal)$ and $\HN(\Qcal)$ by a factor $C$. Then we have the following facts:
\begin{enumerate}[label=(\arabic*)]
\item\label{slopes after stretch} All slopes of $\vertstretch{\Ecal}, \vertstretch{\Fcal}$ and $\vertstretch{\Qcal}$ are integers. 
\smallskip

\item\label{slopewise dominance after stretch} The conditions \ref{slopewise dominance of F on E} - \ref{no common slopes for E and F} in Proposition \ref{key inequality} are satisfied after replacing $\Ecal, \Fcal$ and $\Qcal$ by $\vertstretch{\Ecal}, \vertstretch{\Fcal}$ and $\vertstretch{\Qcal}$. 

\item\label{rank after stretch} $\rk(\Qcal) = \rk(\vertstretch{\Qcal})$ and $\rk(\Ecal) = \rk(\vertstretch{\Ecal})$. 
\smallskip

\item\label{cEF(Q) after stretch} $c_{\vertstretch{\Ecal}, \vertstretch{\Fcal}}(\vertstretch{\Qcal}) = C \cdot c_{\Ecal, \Fcal}(\Qcal)$. 
\end{enumerate}
Indeed, \ref{slopes after stretch}, \ref{slopewise dominance after stretch} and \ref{rank after stretch} are evident by construction while \ref{cEF(Q) after stretch} follows from Lemma \ref{degree computing lemmas}. Now \ref{slopewise dominance after stretch}, \ref{rank after stretch} and \ref{cEF(Q) after stretch} together imply that we may prove Proposition \ref{key inequality} after replacing $\Ecal, \Fcal$ and $\Qcal$ by $\vertstretch{\Ecal}, \vertstretch{\Fcal}$ and $\vertstretch{\Qcal}$, thereby yielding the desired reduction by \ref{slopes after stretch}. 
\end{proof}

\begin{prop}\label{key inequality reduction to minimal rank}
We may prove Proposition \ref{key inequality} under the following additional conditions:
\begin{enumerate}[label=(\roman*), start=5]
\item[\refstepcounter{enumi}\customlabel{minimal rank for E in step 2}{\theenumi'}] $\rk(\Qcal) = \rk(\Ecal) - 1$. 
\smallskip

\item\label{integer slopes of E, F, Q in step 2} all slopes of $\Ecal, \Fcal$ and $\Qcal$ are integers. 
\end{enumerate}
\end{prop}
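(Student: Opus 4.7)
The plan is to combine the integer-slopes reduction of Proposition \ref{key inequality reduction to integer slopes} with an inductive reduction on $\rk(\Ecal) - \rk(\Qcal)$. Given $\Qcal$ satisfying all hypotheses of Proposition \ref{key inequality} with $\rk(\Qcal) \leq \rk(\Ecal) - 2$, I will exhibit a bundle $\Qcal'$ of rank $\rk(\Qcal) + 1$ that also satisfies these hypotheses and for which $c_{\Ecal,\Fcal}(\Qcal') \leq c_{\Ecal,\Fcal}(\Qcal)$; iterating then brings the rank down to exactly $\rk(\Ecal) - 1$, giving the reduction.

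The natural candidate is $\Qcal' = \Qcal \oplus \trivbundle(\lambda)$ for an integer $\lambda$. The inequality $c_{\Ecal,\Fcal}(\Qcal') \leq c_{\Ecal,\Fcal}(\Qcal)$ will hold for \emph{any} such $\lambda$. Indeed, using the integer-slope identity
\[
\deg(\Vcal^\vee \otimes \Wcal)^\nonneg \;=\; \sum_{k \in \Z}\rk(\Vcal^{<k})\,\rk(\Wcal^{\geq k}),
\]
a direct computation gives
\[
c_{\Ecal,\Fcal}(\Qcal) - c_{\Ecal,\Fcal}(\Qcal') \;=\; \sum_{k < \lambda}\bigl[\rk(\Ecal^{\leq k}) - \rk(\Qcal^{\leq k})\bigr] + \sum_{k > \lambda}\bigl[\rk(\Fcal^{\geq k}) - \rk(\Qcal^{\geq k})\bigr],
\]
and both summands are nonnegative by hypotheses \ref{dual slopewise dominance of E on Q} and \ref{slopewise dominance of F on Q} (in the equivalent rank form of Proposition \ref{equivalence of two characterizations for subbundles}), irrespective of whether $\Qcal'$ itself satisfies the dominance conditions.

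The main step, and the principal obstacle, is to exhibit an integer $\lambda$ for which $\Qcal' = \Qcal \oplus \trivbundle(\lambda)$ still satisfies \ref{dual slopewise dominance of E on Q} and \ref{slopewise dominance of F on Q}. Writing $f_1(\mu) = \rk(\Fcal^{\geq \mu}) - \rk(\Qcal^{\geq \mu})$ and $f_2(\mu) = \rk(\Ecal^{\leq \mu}) - \rk(\Qcal^{\leq \mu})$, both nonnegative, the requirement becomes $f_1 \geq 1$ on $(-\infty, \lambda]$ and $f_2 \geq 1$ on $[\lambda, \infty)$. Hypothesis \ref{slopewise dominance of F on E} together with the partition $\rk(\Qcal^{\geq k+1}) + \rk(\Qcal^{\leq k}) = \rk(\Qcal)$ yields the sum rule $f_1(k+1) + f_2(k) \geq \rk(\Ecal) - \rk(\Qcal) \geq 2$ for every integer $k$, providing a rank surplus of at least $2$. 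A careful analysis of the integer zeros of $f_1$ and $f_2$, exploiting condition \ref{no common slopes for E and F} in the delicate cases where $\Qcal$ concentrates its multiplicity at a slope absent from $\Ecal$ and $\Fcal$, will then produce the desired $\lambda$. This existence argument is the crux of the inductive step.
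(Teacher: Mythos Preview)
Your overall strategy---induct on $\rk(\Ecal)-\rk(\Qcal)$, enlarging $\Qcal$ by a line bundle at each step---is appealing, and your computation of $c_{\Ecal,\Fcal}(\Qcal)-c_{\Ecal,\Fcal}(\Qcal')$ is correct. The gap is in the ``existence argument'' for $\lambda$: in general there is \emph{no} integer $\lambda$ such that $\Qcal'=\Qcal\oplus\trivbundle(\lambda)$ satisfies both \ref{dual slopewise dominance of E on Q} and \ref{slopewise dominance of F on Q}, even when condition \ref{no common slopes for E and F} is invoked. A concrete counterexample is
\[
\Ecal=\trivbundle(-2)^{2}\oplus\trivbundle(2)^{2},\qquad
\Fcal=\trivbundle(3)^{2}\oplus\trivbundle(-1)^{2},\qquad
\Qcal=\trivbundle^{2}.
\]
All five hypotheses of Proposition~\ref{key inequality} hold with integer slopes and $\rk(\Ecal)-\rk(\Qcal)=2$. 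A direct check gives $f_1(0)=0$ while $f_2(0)=f_2(1)=0$; hence $\min A=0\le 1=\max B$ in your notation, and no admissible $\lambda$ exists. (This is precisely the ``delicate case'' you anticipate---$\Qcal$ is concentrated at the slope $0$, absent from both $\Ecal$ and $\Fcal$---but condition \ref{no common slopes for E and F} does not rescue it.) Your sum rule $f_1(k+1)+f_2(k)\ge 2$ only rules out $\min A=\max B+1$; it does not prevent $\min A\le\max B$.

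The paper avoids this difficulty by modifying $\Ecal$ rather than $\Qcal$: after twisting so that $\mumax(\Ecal)=0$, one writes $\Ecal=\rankred{\Ecal}\oplus\trivbundle$ and observes that $(\rankred{\Ecal},\Fcal,\Qcal)$ again satisfies all hypotheses \ref{slopewise dominance of F on E}--\ref{no common slopes for E and F}, with
\[
c_{\Ecal,\Fcal}(\Qcal)=c_{\rankred{\Ecal},\Fcal}(\Qcal)+\deg(\Fcal)^{\geq 0}-\deg(\Qcal)^{\geq 0}\ \ge\ c_{\rankred{\Ecal},\Fcal}(\Qcal)
\]
by \ref{slopewise dominance of F on Q} and Lemma~\ref{implications of slopewise dominance}. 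The verification that \ref{dual slopewise dominance of E on Q} persists for $\rankred{\Ecal}$ is immediate from $\rk(\rankred{\Ecal})>\rk(\Qcal)$, so no search for a parameter is needed. If you wish to salvage your approach, you would have to allow enlarging $\Qcal$ by a rank-one bundle of possibly fractional slope, or by several steps at once; but at that point the paper's reduction is both simpler and more robust.
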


\begin{proof}
Suppose that Proposition \ref{key inequality} holds when the conditions \ref{minimal rank for E in step 2} and \ref{integer slopes of E, F, Q in step 2} are satisfied. We wish to deduce the general case of Proposition \ref{key inequality} from this assumption. In light of Proposition \ref{key inequality reduction to integer slopes}, we assume that the condition \ref{integer slopes of E, F, Q in step 2} is satisfied. Under this assumption, we proceed by induction on $\rk(\Ecal) - \rk(\Qcal)$. Since the base case $\rk(\Ecal) - \rk(\Qcal) = 1$ follows from our assumption, we only need to consider the induction step. 


We first reduce our induction step to the case $\mumax(\Ecal) = 0$. For this, we take $\lambda := \mumax(\Ecal)$ and consider the vector bundles 
\[ \Ecal(-\lambda):= \Ecal \otimes \trivbundle(-\lambda), \quad\quad \Fcal(-\lambda):= \Fcal \otimes \trivbundle(-\lambda), \quad\quad \Qcal(-\lambda):= \Qcal \otimes \trivbundle(-\lambda).\]
Note that $\lambda = \mumax(\Ecal)$ is an integer by the condition \ref{integer slopes of E, F, Q in step 2} that we assumed. In particular, the bundle $\trivbundle(\lambda)$ has rank $1$ by Lemma \ref{rank, degree and dual of stable bundles}. It is therefore straightforward to check the following identity using Definition \ref{o-r-over-s}. 
\[ \trivbundle(\mu) \otimes \trivbundle(-\lambda) = \trivbundle(\mu - \lambda) \quad\quad\quad \text{ for all } \mu \in \Q.\]
Then by HN decompositions we observe that $\HN(\Ecal(-\lambda)), \HN(\Fcal(-\lambda))$ and $\HN(\Qcal(-\lambda))$ are obtained by reducing all slopes of $\HN(\Ecal), \HN(\Fcal)$ and $\HN(\Qcal)$ by $\lambda$. Consequently, we deduce the following facts:

\begin{enumerate}[label=(\arabic*)]
\item\label{max slope after shear} $\mumax(\Ecal(-\lambda)) = \mumax(\Ecal) - \lambda = 0$. 
\smallskip

\item\label{slopewise dominance after shear} The conditions \ref{slopewise dominance of F on E} - \ref{no common slopes for E and F} in Proposition \ref{key inequality} and the additional condition \ref{integer slopes of E, F, Q in step 2} are satisfied after replacing $\Ecal, \Fcal$ and $\Qcal$ by $\Ecal(-\lambda), \Fcal(-\lambda)$ and $\Qcal(-\lambda)$.  
\smallskip

\item\label{rank after shear} $\rk(\Qcal(-\lambda)) = \rk(\Qcal)$ and $\rk(\Ecal(-\lambda)) = \rk(\Ecal)$. 
\end{enumerate}
Moreover, by Lemma \ref{degree computing lemmas} we get an identity
\begin{equation}\label{cEF(Q) after shear}
c_{\Ecal(-\lambda), \Fcal(-\lambda)}(\Qcal(-\lambda)) = c_{\Ecal, \Fcal}(\Qcal)
\end{equation}
since $\rk(\trivbundle(-\lambda)) = 1$ as already noted. Now \ref{slopewise dominance after shear}, \ref{rank after shear} and \eqref{cEF(Q) after shear} together imply that we may replace $\Ecal, \Fcal$ and $\Qcal$ by $\Ecal(-\lambda), \Fcal(-\lambda)$ and $\Qcal(-\lambda)$ for the induction step, thereby yielding the desired reduction by \ref{max slope after shear}. 

Let us now assume that $\mumax(\Ecal) = 0$. For our induction step we assume $\rk(\Ecal) - \rk(\Qcal) >1$, or equivalently $\rk(\Ecal) > \rk(\Qcal) +1$. Then we can write 
\begin{equation}\label{reduction on rank induction step decomposition}
\Ecal = \rankred{\Ecal} \oplus \trivbundle 
\end{equation}
where $\mumax(\rankred{\Ecal}) \leq 0$ and $\rk(\rankred{\Ecal})  > \rk(\Qcal)$.

Our next assertion is that the conditions \ref{slopewise dominance of F on E} - \ref{no common slopes for E and F} in Proposition \ref{key inequality} and the additional condition \ref{integer slopes of E, F, Q in step 2} are satisfied after replacing $\Ecal$ by $\rankred{\Ecal}$. The condition \ref{slopewise dominance of F on Q} is trivial since $\Fcal$ and $\Qcal$ remain unchanged. The condition \ref{no common slopes for E and F} and the additional condition \ref{integer slopes of E, F, Q in step 2} are also obvious by construction. For the condition \ref{slopewise dominance of F on E}, we need to check slopewise dominance of $\Fcal$ on $\rankred{\Ecal}$, which follows by combining slopewise dominance of $\Fcal$ on $\Ecal$ and slopewise dominance of $\Ecal$ on $\rankred{\Ecal}$; in fact, the former is given by the condition \ref{slopewise dominance of F on E} for $\Ecal$ and $\Fcal$, whereas the latter follows by applying Proposition \ref{subbundles necessary condition} and Proposition \ref{equivalence of two characterizations for subbundles} to the observation that $\rankred{\Ecal}$ is a subbundle of $\Ecal$ by \eqref{reduction on rank induction step decomposition}. For the remaining condition \ref{dual slopewise dominance of E on Q}, we need to show slopewise dominance of $\rankred{\Ecal}^\vee$ on $\Qcal^\vee$. From \eqref{reduction on rank induction step decomposition} we obtain
\begin{equation}\label{reduction on rank induction step dual decomposition}
\Ecal^\vee = \trivbundle \oplus \rankred{\Ecal}^\vee.
\end{equation}
Moreover, by Lemma \ref{rank, degree and dual of stable bundles}, we have $\mumin(\Ecal^\vee) = -\mumax(\Ecal) = 0$ and $\mumin(\rankred{\Ecal}^\vee) = -\mumax(\rankred{\Ecal})$. Hence we see that $\HN(\rankred{\Ecal}^\vee)$ is obtained from $\HN(\Ecal^\vee)$ by removing the line segment over the interval $(\rk(\Ecal)-1, \rk(\Ecal)]$, as indicated in Figure \ref{reduction on rank dual polygons}. Since $\rk(\Ecal) > \rk(\Qcal)$ by our assumption, this removal process does not affect slopewise dominance on $\Qcal^\vee$. In other words, slopewise dominance of $\Ecal^\vee$ on $\Qcal^\vee$ as given in the condition \ref{dual slopewise dominance of E on Q} implies slopewise dominance of $\rankred{\Ecal}^\vee$ on $\Qcal^\vee$ as desired.

\begin{figure}[H]
\begin{tikzpicture}	

		\coordinate (left) at (0, 0);
		\coordinate (q0) at (1,2);
		\coordinate (q1) at (2, 3);
		\coordinate (q2) at (3.5, 3.5);
		\coordinate (q3) at (5, 3.5);
		

		\coordinate (p0) at (1.5, 1);
		\coordinate (p1) at (3, 1.3);
		\coordinate (p2) at (4, 0.7);
				
		\draw[step=1cm,thick] (left) -- (q0) --  (q1) -- (q2) -- (q3);
		\draw[step=1cm,thick] (left) -- (p0) --  (p1) -- (p2);
		
		\draw [fill] (q0) circle [radius=0.05];		
		\draw [fill] (q1) circle [radius=0.05];		
		\draw [fill] (q2) circle [radius=0.05];		
		\draw [fill] (q3) circle [radius=0.05];		
		\draw [fill] (left) circle [radius=0.05];
		
		\draw [fill] (p0) circle [radius=0.05];		
		\draw [fill] (p1) circle [radius=0.05];		
		\draw [fill] (p2) circle [radius=0.05];		

		\draw[step=1cm,dotted] (4.5, -0.4) -- (4.5, 3.6);

		\node at (4.4,-0.8) {\scriptsize $\rk(\Ecal)-1$};
		
		\path (q3) ++(0.2, 0.3) node {$\HN(\Ecal^\vee)$};
		\path (p2) ++(-0.2, -0.3) node {$\HN(\Qcal^\vee)$};
		\path (left) ++(-0.3, -0.05) node {$O$};

\end{tikzpicture}
\begin{tikzpicture}[scale=0.4]
        \pgfmathsetmacro{\textycoordinate}{8}
		\draw[->, line width=0.6pt] (0, \textycoordinate) -- (1.5,\textycoordinate);
		\draw (0,0) circle [radius=0.00];	
        \hspace{0.2cm}
\end{tikzpicture}
\begin{tikzpicture}	

		\coordinate (left) at (0, 0);
		\coordinate (q0) at (1,2);
		\coordinate (q1) at (2, 3);
		\coordinate (q2) at (3.5, 3.5);
		\coordinate (q3) at (4.5, 3.5);
		

		\coordinate (p0) at (1.5, 1);
		\coordinate (p1) at (3, 1.3);
		\coordinate (p2) at (4, 0.7);
				
		\draw[step=1cm,thick] (left) -- (q0) --  (q1) -- (q2) -- (q3);
		\draw[step=1cm,thick] (left) -- (p0) --  (p1) -- (p2);
		
		\draw [fill] (q0) circle [radius=0.05];		
		\draw [fill] (q1) circle [radius=0.05];		
		\draw [fill] (q2) circle [radius=0.05];		
		\draw [fill] (q3) circle [radius=0.05];		
		\draw [fill] (left) circle [radius=0.05];
		
		\draw [fill] (p0) circle [radius=0.05];		
		\draw [fill] (p1) circle [radius=0.05];		
		\draw [fill] (p2) circle [radius=0.05];		
		
		\draw[step=1cm,dotted] (4.5, -0.4) -- (4.5, 3.6);

		\draw[step=1cm,dashed] (q3) -- (5, 3.5);

		\node at (4.4,-0.8) {\scriptsize $\rk(\Ecal)-1$};
		
		\path (q3) ++(0.2, 0.3) node {$\HN(\rankred{\Ecal^\vee})$};
		\path (p2) ++(-0.2, -0.3) node {$\HN(\Qcal^\vee)$};
		\path (left) ++(-0.3, -0.05) node {$O$};

\end{tikzpicture}
\caption{Illustration of the induction step in terms of dual HN polygons}\label{reduction on rank dual polygons}
\end{figure}
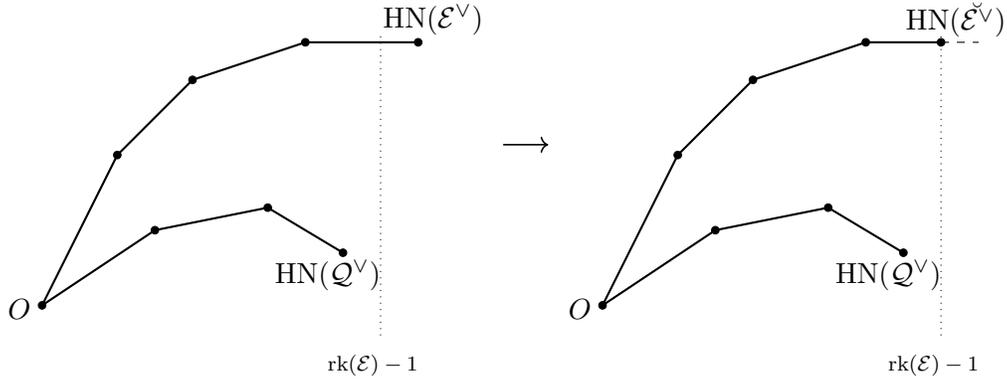

Now, as $\rk(\rankred{\Ecal}) - \rk(\Qcal) < \rk(\Ecal) - \rk(\Qcal)$, our discussion in the preceding paragraph shows that we may apply the induction hypothesis to deduce
\begin{equation}\label{step 1 codim inequality for E', F, Q}
c_{\rankred{\Ecal}, \Fcal}(\Qcal) > 0
\end{equation}
For the desired inequality $c_{\Ecal, \Fcal}(\Qcal) > 0$, we use the decomposition \eqref{reduction on rank induction step dual decomposition} to compute
\begin{align*}
\deg(\Ecal^\vee \otimes \Fcal)^\nonneg &= \deg((\rankred{\Ecal}^\vee \oplus \trivbundle) \otimes \Fcal)^\nonneg\\
&= \deg(\rankred{\Ecal}^\vee \otimes \Fcal)^\nonneg + \deg(\trivbundle \otimes \Fcal)^\nonneg\\
&= \deg(\rankred{\Ecal}^\vee \otimes \Fcal)^\nonneg + \deg(\Fcal)^\nonneg,\\
\deg(\Ecal^\vee \otimes \Qcal)^\nonneg &= \deg((\rankred{\Ecal}^\vee \oplus \trivbundle) \otimes \Qcal)^\nonneg\\
&= \deg(\rankred{\Ecal}^\vee \otimes \Qcal)^\nonneg + \deg(\trivbundle \otimes \Qcal)^\nonneg\\
&= \deg(\rankred{\Ecal}^\vee \otimes \Qcal)^\nonneg + \deg(\Qcal)^\nonneg.
\end{align*}
Then by Definition \ref{definition of cEF(Q)} we find
\[c_{\Ecal, \Fcal}(\Qcal) = c_{\rankred{\Ecal}, \Fcal}(\Qcal) + \deg(\Fcal)^\nonneg - \deg(\Qcal)^\nonneg.\]
Since $\Fcal$ slopewise dominates $\Qcal$ by the condition \ref{slopewise dominance of F on Q}, we use Lemma \ref{implications of slopewise dominance}
to find
\begin{equation}\label{step 1 inequality for codim E, F, Q and codim E', F, Q}
c_{\Ecal, \Fcal}(\Qcal) \geq c_{\rankred{\Ecal}, \Fcal}(\Qcal).
\end{equation}
We thus deduce the desired inequality $c_{\Ecal, \Fcal}(\Qcal)>0$ from \eqref{step 1 codim inequality for E', F, Q} and \eqref{step 1 inequality for codim E, F, Q and codim E', F, Q}. 
\end{proof}

\begin{remark}
Proposition \ref{key inequality reduction to integer slopes} and Proposition \ref{key inequality reduction to minimal rank} are the counterparts of \cite[Proposition 4.4.5 and Proposition 4.4.6]{Hong_quotvb} in our setting. Naturally, their proofs closely follow the proofs of their counterparts. 

Here we note a notable difference between Proposition \ref{key inequality reduction to minimal rank} and its counterpart \cite[Proposition 4.4.6]{Hong_quotvb}. In Proposition \ref{key inequality reduction to minimal rank}, our reduction does not reach the case $\rk(\Qcal) = \rk(\Ecal)$; on the other hand, the reduction in \cite[Proposition 4.4.6]{Hong_quotvb} reaches the case $\rk(\Qcal) = \rk(\Fcal)$. We will see that our argument in \S\ref{degeneration process on dual bundles} requires some additional work because of this difference. 

At first glance, this difference seems to be a direct consequence of the condition \ref{rank inequality for E and Q} in Proposition \ref{key inequality}. However, even if we remove this condition from Proposition \ref{key inequality}, we are still unable to reach the case $\rk(\Qcal) = \rk(\Ecal)$ by our reduction argument in Proposition \ref{key inequality reduction to minimal rank}. The main issue is that, as remarked in \S\ref{reformulation of statement}, removing the condition \ref{rank inequality for E and Q} from Proposition \ref{key inequality} makes the inequality \eqref{deg inequality for inj} a nonstrict inequality where equality may hold even if $\Qcal \not\simeq \Ecal$. In fact, in the proof of \cite[Proposition 4.4.6]{Hong_quotvb} the reduction to the case $\rk(\Qcal) = \rk(\Fcal)$ crucially uses the equality condition $\Qcal \simeq \Fcal$ for the inequality in \cite[Proposition 4.3.5]{Hong_quotvb}.

We also point out that our proof of Proposition \ref{key inequality reduction to integer slopes} and Proposition \ref{key inequality reduction to minimal rank} enjoys the benefits from several features of Proposition \ref{key inequality} as remarked in \S\ref{reformulation of statement}. For example, when we replace the triple $(\Ecal, \Fcal, \Qcal)$ by another triple, such as $(\vertstretch{\Ecal}, \vertstretch{\Fcal}, \vertstretch{\Qcal})$ in the proof of Proposition \ref{key inequality reduction to integer slopes} or $(\Ecal(-\lambda), \Fcal(-\lambda), \Qcal(-\lambda))$ in the proof of Proposition \ref{key inequality reduction to minimal rank}, it is straightforward to check the condition \ref{dual slopewise dominance of E on Q} in Proposition \ref{key inequality} for the new triple because of the absence of the equality condition.

\end{remark}

\begin{prop}\label{key inequality reduction to zero max slope}
We may prove Proposition \ref{key inequality} under the following additional conditions:
\begin{enumerate}[label=(\roman*), start=5]
\item[\refstepcounter{enumi}\customlabel{minimal rank for E in step 3}{\theenumi'}] $\rk(\Qcal) = \rk(\Ecal) - 1$. 
\smallskip

\item\label{integer slopes of E, F, Q in step 3} all slopes of $\Ecal, \Fcal$ and $\Qcal$ are integers. 
\smallskip

\item\label{zero max slope for E in step 3} $\mumax(\Ecal) = 0$. 
\end{enumerate}
\end{prop}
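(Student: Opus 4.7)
The plan is to deduce this from Proposition \ref{key inequality reduction to minimal rank} by a simple twist argument; since that proposition already supplies (v') and (vi), only the new condition (vii), namely $\mumax(\Ecal) = 0$, needs to be arranged.

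Concretely, suppose $\Ecal, \Fcal, \Qcal$ satisfy (i)--(iv) of Proposition \ref{key inequality} together with (v') and (vi). Set $\lambda := \mumax(\Ecal)$, which is an integer by (vi), and consider the twisted triple
\[\Ecal(-\lambda) := \Ecal \otimes \trivbundle(-\lambda), \quad \Fcal(-\lambda) := \Fcal \otimes \trivbundle(-\lambda), \quad \Qcal(-\lambda) := \Qcal \otimes \trivbundle(-\lambda).\]
Since $\lambda \in \Z$, Lemma \ref{rank, degree and dual of stable bundles} gives $\rk(\trivbundle(-\lambda)) = 1$, and Definition \ref{o-r-over-s} then yields the identity $\trivbundle(\mu) \otimes \trivbundle(-\lambda) = \trivbundle(\mu - \lambda)$ for every $\mu \in \Q$. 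Applying this slopewise to the HN decompositions, the HN polygons of the twisted triple are obtained from those of $\Ecal, \Fcal, \Qcal$ by subtracting $\lambda$ from every slope. Hence $\mumax(\Ecal(-\lambda)) = 0$, all slopes remain integers, ranks are unchanged, and the slopewise dominance and no-common-slopes conditions (i)--(iv) together with (v') and (vi) are all preserved.

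It remains to check that $c_{\Ecal, \Fcal}(\Qcal)$ is invariant under this twist. This is immediate from part (2) of Lemma \ref{degree computing lemmas}: each of the four summands in Definition \ref{definition of cEF(Q)},
\[\deg(\Ecal^\vee \otimes \Fcal)^\nonneg,\ \deg(\Qcal^\vee \otimes \Qcal)^\nonneg,\ \deg(\Ecal^\vee \otimes \Qcal)^\nonneg,\ \deg(\Qcal^\vee \otimes \Fcal)^\nonneg,\]
is multiplied by $\rk(\trivbundle(-\lambda))^2 = 1$, hence is individually preserved. Consequently $c_{\Ecal(-\lambda), \Fcal(-\lambda)}(\Qcal(-\lambda)) = c_{\Ecal, \Fcal}(\Qcal)$, so establishing the strict inequality $c_{\Ecal, \Fcal}(\Qcal) > 0$ for the twisted triple (which now satisfies (vii)) yields the same inequality for the original triple.

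There is essentially no genuine obstacle here; the argument is a repackaging of the first half of the induction step in the proof of Proposition \ref{key inequality reduction to minimal rank}, now isolated as its own reduction step. The only point worth emphasizing is the role of (vi): without integrality of the slopes, $\trivbundle(-\lambda)$ would have rank greater than $1$, and both the ranks of the twisted bundles and the factor $\rk(\trivbundle(-\lambda))^2$ in Lemma \ref{degree computing lemmas}(2) would change nontrivially, so Proposition \ref{key inequality reduction to minimal rank} really must be applied first.
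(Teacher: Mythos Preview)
Your proof is correct and follows essentially the same approach as the paper: invoke Proposition \ref{key inequality reduction to minimal rank} to obtain (v') and (vi), then twist by $\trivbundle(-\lambda)$ with $\lambda = \mumax(\Ecal)$ exactly as in the second paragraph of that proof. The paper in fact just points back to that paragraph rather than writing out the details again, so your version is slightly more explicit but otherwise identical in content.
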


\begin{proof}
Let $\Ecal, \Fcal$ and $\Qcal$ be vector bundles on $\schff$ which satisfy the conditions \ref{minimal rank for E in step 3} and \ref{integer slopes of E, F, Q in step 3} in addition to all conditions in Proposition \ref{key inequality}. By Proposition \ref{key inequality reduction to minimal rank}, it suffices to consider such vector bundles for the proof of Proposition \ref{key inequality}. For the desired reduction, we can argue exactly as in the second paragraph of the proof of Proposition \ref{key inequality reduction to minimal rank}; in other words, we set $\lambda := \mumax(\Ecal)$ and replace $\Ecal, \Fcal$ and $\Qcal$ by
\[ \Ecal(-\lambda):= \Ecal \otimes \trivbundle(-\lambda), \quad\quad \Fcal(-\lambda):= \Fcal \otimes \trivbundle(-\lambda), \quad\quad \Qcal(-\lambda):= \Qcal \otimes \trivbundle(-\lambda)\]
to obtain the desired reduction. 
\end{proof}

\subsection{Degeneration of the dual bundles}\label{degeneration process on dual bundles}$ $

By Proposition \ref{key inequality reduction to zero max slope}, our remaining goal is to prove the following statement:

\begin{prop}\label{reduced key inequality}
Let $\Ecal$, $\Fcal$ and $\Qcal$ be vector bundles on $\schff$ with the following properties:
\begin{enumerate}[label=(\roman*)]
\item\label{slopewise dominance of F on E, reduced} $\Fcal$ slopewise dominates $\Ecal$. 
\smallskip

\item\label{dual slopewise dominance of E on Q, reduced} $\Ecal^\vee$ slopewise dominates $\Qcal^\vee$. 
\smallskip

\item\label{slopewise dominance of F on Q, reduced} $\Fcal$ slopewise dominates $\Qcal$. 
\smallskip




\item\label{no common slopes for E and F, reduced} $\Ecal$ and $\Fcal$ have no common slopes. 
\smallskip

\item\label{minimal rank for E, reduced} $\rk(\Qcal) = \rk(\Ecal) - 1$. 
\smallskip

\item\label{integer slopes of E, F, Q, reduced} all slopes of $\Ecal, \Fcal$ and $\Qcal$ are integers. 
\smallskip

\item\label{zero max slope for E, reduced} $\mumax(\Ecal) = 0$. 
\end{enumerate}
Then we have an inequality
\begin{equation}\label{deg inequality for inj, reduced}
c_{\Ecal, \Fcal}(\Qcal)>0.
\end{equation}
\end{prop}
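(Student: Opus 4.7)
The plan is to prove Proposition \ref{reduced key inequality} by induction on a complexity measure of $\Ecal^\vee$ (for instance $\deg(\Ecal^\vee) = -\deg(\Ecal)$, or the number of distinct HN slopes of $\Ecal^\vee$), with the inductive step provided by a degeneration process on $\Ecal^\vee$ that mirrors the degeneration on $\Fcal$ in the main argument of \cite{Hong_quotvb}. Since $\mumax(\Ecal) = 0$ and all slopes are integers, $\Ecal^\vee$ has $\mumin(\Ecal^\vee) = 0$ and all HN slopes in $\Z_{\geq 0}$, so the natural base case is $\Ecal \simeq \trivbundle^{\oplus n}$. In the base case, condition \ref{dual slopewise dominance of E on Q, reduced} forces all HN slopes of $\Qcal$ to be nonnegative, while conditions \ref{slopewise dominance of F on E, reduced} and \ref{no common slopes for E and F, reduced} together force $\mumax(\Fcal) \geq 1$. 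The identities $\Ecal^\vee \otimes \Fcal \simeq \Fcal^{\oplus n}$ and $\Ecal^\vee \otimes \Qcal \simeq \Qcal^{\oplus n}$, combined with the integer-slope formula $\deg(\Vcal^\vee \otimes \Wcal)^\nonneg = \sum_{i,j}(\beta_j - \alpha_i)^+ v_i w_j$, reduce $c_{\Ecal, \Fcal}(\Qcal) > 0$ to a purely combinatorial inequality, which follows from the rank gap $\rk(\Qcal) = n - 1 < n = \rk(\Ecal)$ together with the existence of a positive slope of $\Fcal$ that is not paired off against any slope of $\Qcal$.

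For the inductive step, set $\lambda := \mumax(\Ecal^\vee) > 0$ and split $\Ecal^\vee \simeq \trivbundle(\lambda)^{\oplus m} \oplus \Ecal_0^\vee$ with $\mumax(\Ecal_0^\vee) < \lambda$. The degeneration produces a simpler bundle $\Ecal_*^\vee$ by redistributing the top slope one unit downward, for instance $\Ecal_*^\vee \simeq \trivbundle(\lambda - 1)^{\oplus m} \oplus \Ecal_0^\vee$ (possibly after a finer combinatorial modification so that the HN polygon of the resulting $\Ecal_*^\vee$ still starts at height $0$ and has nonnegative slopes). After renormalizing via the tensoring trick from Proposition \ref{key inequality reduction to zero max slope} to restore $\mumax(\Ecal_*) = 0$, one verifies that the seven conditions of Proposition \ref{reduced key inequality} continue to hold for the triple $(\Ecal_*, \Fcal, \Qcal)$, and one establishes a monotonicity inequality $c_{\Ecal, \Fcal}(\Qcal) \geq c_{\Ecal_*, \Fcal}(\Qcal)$ by a direct degree calculation, using convexity of the function $(x)^+$. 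The induction hypothesis applied to $(\Ecal_*, \Fcal, \Qcal)$ then yields $c_{\Ecal_*, \Fcal}(\Qcal) > 0$, and hence $c_{\Ecal, \Fcal}(\Qcal) > 0$.

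The main obstacle will be preserving the slopewise dominance of $\Ecal^\vee$ over $\Qcal^\vee$ and the no-common-slopes hypothesis under the degeneration. Decreasing a slope of $\Ecal^\vee$ near a slope of $\Qcal^\vee$ can easily break condition \ref{dual slopewise dominance of E on Q, reduced}, and the modified slope may coincide with an existing slope of $\Fcal$, violating condition \ref{no common slopes for E and F, reduced}. This should require a careful case analysis on the local geometry of the three polygons $\HN(\Ecal^\vee)$, $\HN(\Qcal^\vee)$, and $\HN(\Fcal)$, and possibly simultaneous modifications of $\Qcal$ or $\Fcal$ to absorb the shifted slope; this is presumably the source of the ``nontrivial adjustments'' advertised in the introduction relative to the argument of \cite{Hong_quotvb}, where the inequality is non-strict and the equality-case analysis plays a different role.
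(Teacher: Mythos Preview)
Your degeneration runs in the wrong direction, and the obstacles you flag at the end are not peripheral but fatal to the strategy as stated. Lowering the top slope of $\Ecal^\vee$ by one moves $\Ecal^\vee$ \emph{away} from $\Qcal^\vee$ in the only sense that matters: when $\mumax(\Qcal^\vee) = \mumax(\Ecal^\vee)$ (which nothing prevents), your $\Ecal_*^\vee$ immediately fails to slopewise dominate $\Qcal^\vee$, so condition \ref{dual slopewise dominance of E on Q, reduced} is lost and you cannot invoke the induction hypothesis. Likewise, the new slope $-\lambda+1$ of $\Ecal_*$ can easily be a slope of $\Fcal$, killing \ref{no common slopes for E and F, reduced}. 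Your proposed fix, ``simultaneous modifications of $\Qcal$ or $\Fcal$'', does not work: the quantity $c_{\Ecal,\Fcal}(\Qcal)$ depends on all three bundles, so perturbing $\Qcal$ or $\Fcal$ changes the very number whose positivity you are trying to establish, and there is no evident way to control that change.

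The paper's argument avoids both problems by degenerating $\Ecal^\vee$ \emph{toward} $\Qcal^\vee$ rather than toward $\trivbundle^{\oplus n}$. Concretely, one first strips a single $\trivbundle$ summand from $\Ecal$ to obtain $\Ecal_1$ with $\rk(\Ecal_1)=\rk(\Qcal)$, and then iterates the maximal slope reduction of $\Ecal_i^\vee$ relative to $\Qcal^\vee$ (Definition \ref{max slope reduction definition}) using the decomposition of Lemma \ref{implications of slopewise dominance}\ref{existence of max common factor decomp}. By design this operation preserves slopewise dominance of $\Ecal_i^\vee$ over $\Qcal^\vee$ at every step, and the sequence stabilizes at $\Ecal_r \simeq \Qcal$, where $c_{\Qcal,\Fcal}(\Qcal)=0$ is a tautology; so no separate base case is needed. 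Crucially, the no-common-slopes hypothesis \ref{no common slopes for E and F, reduced} is \emph{not} carried along the sequence at all: it is invoked exactly once, together with \ref{slopewise dominance of F on E, reduced}, to show that the drop $c_{\Ecal,\Fcal}(\Qcal) \to c_{\Ecal_2,\Fcal}(\Qcal)$ across the first two steps is strictly positive (Proposition \ref{strict decreasing cEF(Q) in first two steps}). The monotonicity along the rest of the sequence (Proposition \ref{decreasing cEF(Q) for degenerating sequence}) uses only \ref{slopewise dominance of F on Q, reduced}. The key structural point you are missing is that by aiming the degeneration at $\Qcal$ instead of at $\trivbundle^{\oplus n}$, conditions \ref{slopewise dominance of F on E, reduced} and \ref{no common slopes for E and F, reduced} become one-time inputs for strictness rather than invariants to be maintained.
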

For the rest of this paper, we fix vector bundles $\Ecal, \Fcal$ and $\Qcal$ as in the statement of Proposition \ref{reduced key inequality}. 

Let us briefly sketch our proof of Proposition \ref{reduced key inequality}. The key idea is to construct a finite sequence
\[ \Ecal = \Ecal_0, ~\Ecal_1, ~\cdots, ~\Ecal_r = \Qcal\]
which is ``dually degenerating" in the sense that $\Ecal_i^\vee$ slopewise dominates $\Ecal_{i+1}^\vee$ for each $i = 0, 1, \cdots, r$. By this ``degenerating" property, we will obtain 
\begin{equation}\label{decreasing cEF(Q) for degenerating sequence}
c_{\Ecal_i, \Fcal}(\Qcal) \geq c_{\Ecal_{i+1}, \Fcal}(\Qcal) \quad\quad\quad \text{ for each } i = 0, 1, \cdots, r-1.
\end{equation}
Consequently we will deduce
\begin{equation}\label{deg inequality from degenerating sequence} 
c_{\Ecal, \Fcal}(\Qcal) = c_{\Ecal_0, \Fcal}(\Qcal) \geq c_{\Ecal_r, \Fcal}(\Qcal) = c_{\Qcal, \Fcal}(\Qcal) = 0
\end{equation}
where the last identity follows immediately from Definition \ref{definition of cEF(Q)}. We will then show that equality in \eqref{deg inequality from degenerating sequence} never holds by examining the equality condition of the inequality \eqref{decreasing cEF(Q) for degenerating sequence}.

\begin{remark}
Our proof of Proposition \ref{reduced key inequality} will closely follow the argument in \cite[\S4.4]{Hong_quotvb}. However, there are some adjustments that we need to make. 

In \cite[\S4.4]{Hong_quotvb}, the construction of the degenerating sequence crucially relies on the condition $\rk(\Qcal) = \rk(\Fcal)$. In our context, since we begin with the condition $\rk(\Qcal) = \rk(\Ecal) - 1$, we will need an additional step to attain a similar ``equal rank" condition. We will thus construct $\Ecal_1$ by cutting down $\Ecal$ so that we have $\rk(\Qcal) = \rk(\Ecal_1)$. 


The main subtlety for our proof of Proposition \ref{reduced key inequality} lies in establishing nonstrictness of the inequality \eqref{deg inequality for inj, reduced}. In \cite[\S4.4]{Hong_quotvb}, the equality condition for the inequality $c_{\Ecal, \Fcal}(\Qcal) \geq 0$ is established by showing that $c_{\Ecal, \Fcal}(\Qcal)$ strictly decreases during the first step, or more precisely $c_{\Ecal, \Fcal}(\Qcal)>c_{\Ecal, \Fcal_1}(\Qcal)$. In our situation, we will have to simultaneously consider the first two steps because of the additional step that we described in the preceding paragraph. Our argument also requires some additional adjustments on details as we will see in the proof of Proposition \ref{strict decreasing cEF(Q) in first two steps}. 


\end{remark}

We now begin our proof of Proposition \ref{reduced key inequality}. As remarked above, the first step of our construction aims to attain an equal rank condition by cutting down $\Ecal$. 

\begin{prop}\label{construction of E1} Let $\Ecal_1$ be a direct summand of $\Ecal$ such that
\[ \Ecal = \Ecal_1 \oplus \trivbundle.\]
Then we have the following facts:
\begin{enumerate}[label=(\arabic*)]
\item\label{no common slopes for E1 and F} $\Ecal_1$ and $\Fcal$ have no common slopes. 
\smallskip

\item\label{rank of E1} $\rk(\Qcal) = \rk(\Ecal_1)$
\smallskip

\item\label{integer slopes for E1} all slopes of $\Ecal_1$ are integers. 
\smallskip

\item\label{max slope of E1} $\mumax(\Ecal_1) \leq 0$. 
\smallskip

\item\label{slopewise dominance of Q on E1} $\Qcal$ slopewise dominates $\Ecal_1$
\smallskip

\item\label{decreasing c for E1} We have an inequality
\[c_{\Ecal, \Fcal}(\Qcal) \geq c_{\Ecal_1, \Fcal}(\Qcal)\]
with equality if and only if $\deg(\Fcal)^\nonneg = \deg(\Qcal)^\nonneg$. 
\end{enumerate}
\end{prop}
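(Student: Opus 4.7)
The plan is to verify properties (1)--(4) as quick consequences of the decomposition $\Ecal = \Ecal_1 \oplus \trivbundle$, then establish (5) via a duality argument and (6) by a direct tensor computation. The existence of such an $\Ecal_1$ is itself guaranteed: conditions \ref{integer slopes of E, F, Q, reduced} and \ref{zero max slope for E, reduced} force $\trivbundle = \trivbundle(0)$ to appear with positive multiplicity in the HN decomposition of $\Ecal$. Since the HN slopes of $\Ecal_1$ form a sub-multiset of those of $\Ecal$, the no-common-slope property with $\Fcal$, integrality of slopes, and the bound $\mumax(\Ecal_1) \leq \mumax(\Ecal) = 0$ all transfer immediately, and $\rk(\Ecal_1) = \rk(\Ecal) - 1 = \rk(\Qcal)$ by condition \ref{minimal rank for E, reduced}. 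This disposes of (1)--(4).

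For (5), I plan to pass to duals and invoke the equal-rank duality for slopewise dominance. Dualizing gives $\Ecal^\vee \simeq \Ecal_1^\vee \oplus \trivbundle$, and Lemma \ref{rank, degree and dual of stable bundles} together with $\mumax(\Ecal) = 0$ yields $\mumin(\Ecal^\vee) = 0$, so the $\trivbundle$-summand supplies the rightmost unit segment of $\HN(\Ecal^\vee)$. Equivalently, $\HN(\Ecal_1^\vee)$ is $\HN(\Ecal^\vee)$ truncated at horizontal coordinate $\rk(\Ecal) - 1$. Since $\rk(\Qcal^\vee) = \rk(\Ecal_1^\vee)$, the slopewise dominance inequalities for $\Ecal^\vee$ over $\Qcal^\vee$ provided by condition \ref{dual slopewise dominance of E on Q, reduced} concern only the untouched portion of the polygon and thus transfer verbatim to slopewise dominance of $\Ecal_1^\vee$ over $\Qcal^\vee$. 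Applying the equal-rank duality statement of Lemma \ref{implications of slopewise dominance} then yields that $\Qcal$ slopewise dominates $\Ecal_1$.

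For (6), the plan is to compute the difference $c_{\Ecal, \Fcal}(\Qcal) - c_{\Ecal_1, \Fcal}(\Qcal)$ directly. Tensoring $\Ecal^\vee \simeq \Ecal_1^\vee \oplus \trivbundle$ with $\Fcal$ and with $\Qcal$ and using $\trivbundle \otimes \Vcal \simeq \Vcal$ decomposes $\deg(\Ecal^\vee \otimes \Fcal)^\nonneg$ and $\deg(\Ecal^\vee \otimes \Qcal)^\nonneg$ with extra contributions $\deg(\Fcal)^\nonneg$ and $\deg(\Qcal)^\nonneg$ respectively, while the remaining two terms in Definition \ref{definition of cEF(Q)} cancel. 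The difference collapses to
\[c_{\Ecal, \Fcal}(\Qcal) - c_{\Ecal_1, \Fcal}(\Qcal) = \deg(\Fcal)^\nonneg - \deg(\Qcal)^\nonneg.\]
The nonnegativity part of Lemma \ref{implications of slopewise dominance}, applied to the slopewise dominance of $\Fcal$ over $\Qcal$ from condition \ref{slopewise dominance of F on Q, reduced}, then gives both the desired inequality and the claimed equality criterion.

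The main subtlety I anticipate is the duality argument in (5): it is crucial that $\mumax(\Ecal) = 0$ places the removed slope-$0$ segment at the rightmost end of $\HN(\Ecal^\vee)$ and that the equal-rank condition $\rk(\Qcal^\vee) = \rk(\Ecal_1^\vee)$ confines the dominance test to exactly the portion of $\HN(\Ecal^\vee)$ that survives in $\HN(\Ecal_1^\vee)$. Without either feature, removing the $\trivbundle$-summand could in principle break slopewise dominance over $\Qcal^\vee$; the remaining items are essentially HN bookkeeping and the elementary identity $(\Vcal \oplus \trivbundle) \otimes \Wcal \simeq (\Vcal \otimes \Wcal) \oplus \Wcal$.
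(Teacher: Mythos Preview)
Your proposal is correct and follows essentially the same approach as the paper's proof. The paper handles (1)--(4) in one line from the ambient conditions, derives slopewise dominance of $\Ecal_1^\vee$ over $\Qcal^\vee$ from conditions \ref{dual slopewise dominance of E on Q, reduced} and \ref{zero max slope for E, reduced} and then applies the equal-rank duality of Lemma \ref{implications of slopewise dominance} exactly as you do, and for (6) refers back to the computation in the last paragraph of Proposition \ref{key inequality reduction to minimal rank}, which is precisely your tensor decomposition yielding $c_{\Ecal, \Fcal}(\Qcal) - c_{\Ecal_1, \Fcal}(\Qcal) = \deg(\Fcal)^\nonneg - \deg(\Qcal)^\nonneg$.
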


\begin{proof}
By construction, the statements \ref{no common slopes for E1 and F}, \ref{rank of E1}, \ref{integer slopes for E1} and \ref{max slope of E1} follow immediately from the conditions \ref{no common slopes for E and F}, \ref{minimal rank for E, reduced}, \ref{integer slopes of E, F, Q, reduced} and \ref{zero max slope for E, reduced} in Proposition \ref{reduced key inequality}. In addition, the condition \ref{dual slopewise dominance of E on Q} and the condition \ref{zero max slope for E, reduced} in Proposition \ref{reduced key inequality} together yield slopewise dominance of $\Ecal_1^\vee$ on $\Qcal^\vee$, which consequently implies the statement \ref{slopewise dominance of Q on E1} by Lemma \ref{implications of slopewise dominance} and the statement \ref{rank of E1}.
Moreover, we can argue as in the last paragraph of the proof of Proposition \ref{key inequality reduction to minimal rank} to find
\[c_{\Ecal, \Fcal}(\Qcal) = c_{\Ecal_1, \Fcal}(\Qcal) + \deg(\Fcal)^\nonneg - \deg(\Qcal)^\nonneg,\]
from which the statement \ref{decreasing c for E1} follows by Lemma \ref{implications of slopewise dominance} and the condition \ref{slopewise dominance of F on Q, reduced} in Proposition \ref{reduced key inequality}. 
\end{proof}

In order to describe the rest of our construction, we recall the following notion from \cite{Hong_quotvb}:

\begin{defn}\label{max slope reduction definition}
Let $\Vcal$ and $\Wcal$ be nonzero vector bundles on $\schff$ with integer slopes such that $\Vcal$ slopewise dominates $\Wcal$. We refer to the vector bundle
\[\maxslopered{\Vcal} := \trivbundle(\mumax(\Wcal))^{\oplus \rk(\Vcal^{>\mumax(\Wcal)})} \oplus \Vcal^{\leq \mumax(\Wcal)}\]
as the \emph{maximal slope reduction} of $\Vcal$ to $\Wcal$. In other words, $\maxslopered{\Vcal}$ is the vector bundle on $\schff$ obtained from $\Vcal$ by reducing all slopes of $\Vcal^{>\mumax(\Wcal)}$ to $\mumax(\Wcal)$. 
\end{defn}

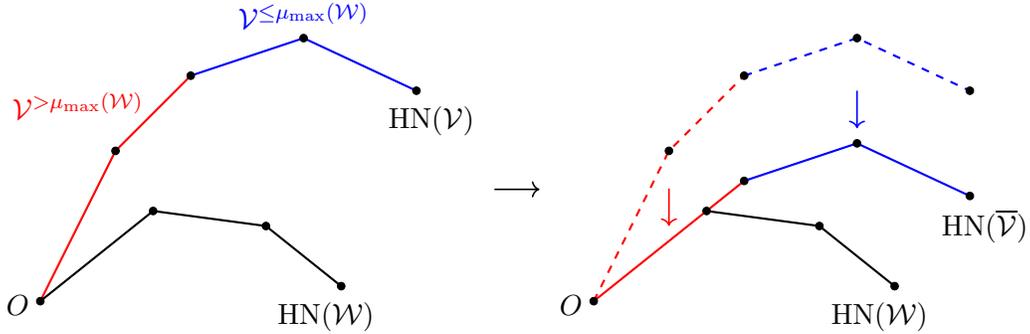
\begin{figure}[H]
\begin{tikzpicture}	

		\coordinate (left) at (0, 0);
		\coordinate (q0) at (1,2);
		\coordinate (q1) at (2, 3);
		\coordinate (q2) at (3.5, 3.5);
		\coordinate (q3) at (5, 2.8);
		

		\coordinate (p0) at (1.5, 1.2);
		\coordinate (p1) at (3, 1);
		\coordinate (p2) at (4, 0.2);
				
		\draw[step=1cm,thick, color=red] (left) -- (q0) --  (q1);
		\draw[step=1cm,thick, color=blue] (q1) -- (q2) -- (q3);
		\draw[step=1cm,thick, color=black] (left) -- (p0) -- (p1) -- (p2);

		\draw [fill] (q0) circle [radius=0.05];		
		\draw [fill] (q1) circle [radius=0.05];		
		\draw [fill] (q2) circle [radius=0.05];		
		\draw [fill] (q3) circle [radius=0.05];		
		\draw [fill] (left) circle [radius=0.05];
		
		\draw [fill] (p0) circle [radius=0.05];		
		\draw [fill] (p1) circle [radius=0.05];		
		\draw [fill] (p2) circle [radius=0.05];		


		
		\path (q3) ++(0.2, -0.4) node {$\HN(\Vcal)$};
		\path (p2) ++(-0.2, -0.4) node {$\HN(\Wcal)$};
		\path (left) ++(-0.3, -0.05) node {$O$};

		\path (q0) ++(-0.5, 0.6) node {\color{red}$\Vcal^{>\mumax(\Wcal)}$};
		\path (q2) ++(0, 0.3) node {\color{blue}$\Vcal^{\leq \mumax(\Wcal)}$};

\end{tikzpicture}
\begin{tikzpicture}[scale=0.4]
        \pgfmathsetmacro{\textycoordinate}{5}
		\draw[->, line width=0.6pt] (0, \textycoordinate) -- (1.5,\textycoordinate);
		\draw (0,0) circle [radius=0.00];	
        \hspace{0.2cm}
\end{tikzpicture}
\begin{tikzpicture}
		\pgfmathsetmacro{\reducedycoordinate}{1.6}
	

		\coordinate (left) at (0, 0);
		\coordinate (q0) at (1,2);
		\coordinate (q1) at (2, 3);
		\coordinate (q2) at (3.5, 3.5);
		\coordinate (q3) at (5, 2.8);

		\coordinate (left) at (0, 0);
		\coordinate (q1') at (2, \reducedycoordinate);
		\coordinate (q2') at (3.5, \reducedycoordinate+0.5);
		\coordinate (q3') at (5, \reducedycoordinate-0.2);
		

		\coordinate (p0) at (1.5, 1.2);
		\coordinate (p1) at (3, 1);
		\coordinate (p2) at (4, 0.2);

		\draw[step=1cm,thick,dashed, color=red] (left) -- (q0) --  (q1);
		\draw[step=1cm,thick,dashed, color=blue] (q1) -- (q2) -- (q3);
		\draw[step=1cm,thick, color=red] (left) --(q1');
		\draw[step=1cm,thick, color=blue] (q1') -- (q2') -- (q3');
		\draw[step=1cm,thick] (p0) --  (p1) -- (p2);
		
		\draw [fill] (q0) circle [radius=0.05];		
		\draw [fill] (q1) circle [radius=0.05];		
		\draw [fill] (q2) circle [radius=0.05];		
		\draw [fill] (q3) circle [radius=0.05];		
		\draw [fill] (left) circle [radius=0.05];

		\draw [fill] (q1') circle [radius=0.05];			
		\draw [fill] (q2') circle [radius=0.05];		
		\draw [fill] (q3') circle [radius=0.05];		
		
		\draw [fill] (p0) circle [radius=0.05];		
		\draw [fill] (p1) circle [radius=0.05];		
		\draw [fill] (p2) circle [radius=0.05];		

		\draw[->, line width=0.6pt, color=red] (1, 1.5) -- (1,1);
		\draw[->, line width=0.6pt, color=blue] (3.5, 2.8) -- (3.5,2.3);
		


		
		\path (q3') ++(0.2, -0.4) node {$\HN(\maxslopered{\Vcal})$};
		\path (p2) ++(-0.2, -0.4) node {$\HN(\Wcal)$};
		\path (left) ++(-0.3, -0.05) node {$O$};

\end{tikzpicture}
\caption{Illustration of the maximal slope reduction}
\end{figure}

We note some basic properties of the maximal slope reduction. 

\begin{lemma}\label{basic properties of max slope reduction}
Let $\Vcal$ and $\Wcal$ be nonzero vector bundles on $\schff$ with integer slopes such that $\Vcal$ slopewise dominates $\Wcal$. Let $\maxslopered{\Vcal}$ denote the maximal slope reduction of $\Vcal$ to $\Wcal$. Then we have the following facts:
\begin{enumerate}[label=(\arabic*)]
\item $\mumax(\maxslopered{\Vcal}) = \mumax(\Wcal)$. 
\smallskip

\item $\rk(\maxslopered{\Vcal}) = \rk(\Vcal)$. 
\smallskip

\item $\Vcal = \maxslopered{\Vcal}$ if and only if $\mumax(\Vcal) = \mumax(\Wcal)$. 
\smallskip

\item $\maxslopered{\Vcal}$ slopewise dominates $\Wcal$. 
\smallskip

\item all slopes of $\maxslopered{\Vcal}$ are integers. 
\end{enumerate}
\end{lemma}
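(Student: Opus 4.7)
The plan is to verify each of the five statements directly from Definition \ref{max slope reduction definition}, using two key observations: first, since all slopes of $\Wcal$ are integers, $\mumax(\Wcal) \in \Z$ and hence $\rk(\trivbundle(\mumax(\Wcal))) = 1$ by Lemma \ref{rank, degree and dual of stable bundles}; second, the hypothesis of slopewise dominance of $\Vcal$ on $\Wcal$ forces $\mumax(\Vcal) \geq \mumax(\Wcal)$.

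Statements (2), (3), and (5) fall out immediately. For (2), the rank-$1$ observation above gives $\rk(\trivbundle(\mumax(\Wcal))^{\oplus \rk(\Vcal^{>\mumax(\Wcal)})}) = \rk(\Vcal^{>\mumax(\Wcal)})$, and adding $\rk(\Vcal^{\leq \mumax(\Wcal)})$ yields $\rk(\Vcal)$. For (3), if $\mumax(\Vcal) = \mumax(\Wcal)$ then $\Vcal^{>\mumax(\Wcal)} = 0$ and $\maxslopered{\Vcal} = \Vcal^{\leq \mumax(\Wcal)} = \Vcal$; the converse will follow from (1). For (5), $\mumax(\Wcal)$ is an integer and all slopes appearing in $\Vcal^{\leq \mumax(\Wcal)}$ are slopes of $\Vcal$, hence integers. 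For (1), I would split on whether $\Vcal^{>\mumax(\Wcal)}$ vanishes: if it does not, the first summand contributes slope exactly $\mumax(\Wcal)$ while the second contributes only slopes at most $\mumax(\Wcal)$; if it does vanish, then all slopes of $\Vcal$ are at most $\mumax(\Wcal)$, so $\mumax(\Vcal) \leq \mumax(\Wcal)$, which combined with slopewise dominance gives $\mumax(\Vcal) = \mumax(\Wcal)$, so $\maxslopered{\Vcal} = \Vcal$ has max slope $\mumax(\Wcal)$.

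The main obstacle is (4), which requires an explicit description of $\HN(\maxslopered{\Vcal})$. Setting $k := \rk(\Vcal^{>\mumax(\Wcal)})$, the HN polygon $\HN(\maxslopered{\Vcal})$ consists of a segment of constant slope $\mumax(\Wcal)$ over $[0, k]$ (possibly extended if $\Vcal^{\leq \mumax(\Wcal)}$ contains a slope-$\mumax(\Wcal)$ summand), followed on each interval $[i-1, i]$ with $i > k$ by the same slope as $\HN(\Vcal)$ on $[i-1, i]$; this is well-defined as a convex-down polygon because the slopes of $\Vcal^{\leq \mumax(\Wcal)}$ are all $\leq \mumax(\Wcal)$. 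To verify slopewise dominance of $\maxslopered{\Vcal}$ over $\Wcal$, I would then check pointwise: for $i \leq k$ the slope of $\HN(\maxslopered{\Vcal})$ on $[i-1, i]$ is $\mumax(\Wcal)$, which dominates any slope of $\HN(\Wcal)$; for $i > k$ this slope equals that of $\HN(\Vcal)$ on $[i-1, i]$, which dominates the slope of $\HN(\Wcal)$ on the same interval by the standing hypothesis. The essential subtlety to verify carefully is that the recipe above really produces $\HN(\maxslopered{\Vcal})$, i.e.\ that the concatenation process does not disturb the ordering of slopes; this reduces to the inequality $\mumax(\Vcal^{\leq \mumax(\Wcal)}) \leq \mumax(\Wcal)$, which is immediate from the definition.
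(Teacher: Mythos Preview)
Your proposal is correct and follows the same approach as the paper, which simply states that all five statements follow immediately from Definition \ref{max slope reduction definition}. You have merely written out in detail what the paper leaves implicit; in particular your handling of (1) and (4) is more careful than necessary but entirely sound.
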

\begin{proof}
All statements follow immediately from Definition \ref{max slope reduction definition}.
\end{proof}

We also note a computational lemma that we will use.
\begin{lemma}[{\cite[Lemma 2.3.4]{Arizona_extvb}}]\label{cross product representation of degrees}
Let $\Vcal$ and $\Wcal$ be any vector bundles on $\schff$ with HN decompositions
\[\Vcal \simeq \bigoplus_{i=1}^p \trivbundle(\lambda_i)^{\oplus m_i} \quad\quad \text{ and } \quad\quad \Wcal \simeq \bigoplus_{j=1}^q \trivbundle(\kappa_j)^{\oplus n_j}\]
where $\lambda_1>\lambda_2 > \cdots > \lambda_p$ and $\kappa_1 > \kappa_2 > \cdots > \kappa_q$. We represent the $i$-th segment in $\HN(\Vcal)$ and $j$-th segment in $\HN(\Wcal)$ (from left to right) by the vectors $v_i$ and $w_j$, respectively. More precisely, we set
\[ v_i := \big(\rk(\trivbundle(\lambda_i)^{\oplus m_i}), \deg(\trivbundle(\lambda_i)^{\oplus m_i})\big) \quad\quad \text{ and } \quad\quad w_j := \big(\rk(\trivbundle(\kappa_j)^{\oplus n_j}), \deg(\trivbundle(\kappa_j)^{\oplus n_j})\big).\]

\begin{center}
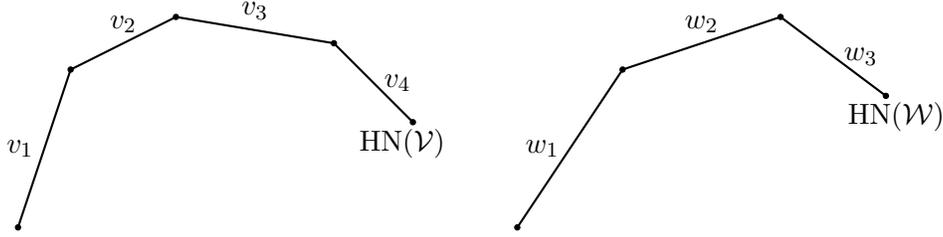
\begin{figure}[h]
\begin{tikzpicture}[scale=0.7]
		\coordinate (left) at (0, 0);
		\coordinate (q1) at (2.5, 3.5);
		\coordinate (q2) at (4, 4);
		\coordinate (q3) at (5, 3.7);

		\coordinate (p0) at (1, 3);
		\coordinate (p1) at (3, 4);
		\coordinate (p2) at (6, 3.5);
		\coordinate (p3) at (7.5, 2);
				

		\draw[step=1cm,thick] (left) -- node[left] {$v_1$} (p0);
		\draw[step=1cm,thick] (p0) -- node[above] {$v_2$} (p1);
		\draw[step=1cm,thick] (p1) -- node[above] {$v_3$} (p2);
		\draw[step=1cm,thick] (p2) -- node[right] {$v_4$} (p3);

		\draw [fill] (left) circle [radius=0.05];
		
		\draw [fill] (p0) circle [radius=0.05];		
		\draw [fill] (p1) circle [radius=0.05];		
		\draw [fill] (p2) circle [radius=0.05];		
		\draw [fill] (p3) circle [radius=0.05];		


		
		\path (p3) ++(-0.2, -0.4) node {$\HN(\Vcal)$};


\end{tikzpicture}
\hspace{.2in}
\begin{tikzpicture}[scale=0.7]
		\coordinate (left) at (0, 0);
		\coordinate (q1) at (2, 3);
		\coordinate (q2) at (5, 4);
		\coordinate (q3) at (7, 2.5);

		\coordinate (p0) at (0.5, 1.5);
		\coordinate (p1) at (1.5, 2.5);
		\coordinate (p2) at (3.5, 3);
		\coordinate (p3) at (4.5, 2.5);
		\coordinate (p4) at (5, 1.5);
				

		\draw[step=1cm,thick] (left) -- node[left] {$w_1$} (q1);
		\draw[step=1cm,thick] (q1) -- node[above] {$w_2$} (q2);
		\draw[step=1cm,thick] (q2) -- node[right] {$w_3$} (q3);

		\draw [fill] (q1) circle [radius=0.05];		
		\draw [fill] (q2) circle [radius=0.05];		
		\draw [fill] (q3) circle [radius=0.05];		
		\draw [fill] (left) circle [radius=0.05];
		


		
		\path (q3) ++(0.2, -0.4) node {$\HN(\Wcal)$};


\end{tikzpicture}
\setlength{\belowcaptionskip}{-0.3in}
\caption{Vector representation of HN polygons.}\label{diamond_Lemma_fig}
\end{figure}
\end{center}
If we write $\mu(v_i)$ and $\mu(w_j)$ respectively for the slopes of $v_i$ and $w_j$, we have an identity
\[ \deg(\Vcal^\vee \otimes \Wcal)^\nonneg = \sum_{\mu(v_i) \leq \mu(w_j)} v_i \times w_j\]
where $v_i \times w_j$ denotes the two-dimensional cross product of the vectors $v_i$ and $w_j$. In particular, we have $\deg(\Vcal^\vee \otimes \Wcal)^\nonneg = 0$ if $\mumin(\Vcal) \geq \mumax(\Wcal)$. 
\end{lemma}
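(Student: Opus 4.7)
The plan is to reduce the identity to a direct block-by-block computation on HN summands, using the fact that on the Fargues-Fontaine curve tensor products of semistable bundles remain semistable with slopes adding. First I would use the HN decompositions of $\Vcal$ and $\Wcal$ together with Lemma \ref{rank, degree and dual of stable bundles} to write
\[ \Vcal^\vee \otimes \Wcal \simeq \bigoplus_{i=1}^{p}\bigoplus_{j=1}^{q} \bigl(\trivbundle(-\lambda_i) \otimes \trivbundle(\kappa_j)\bigr)^{\oplus m_i n_j}. \]
Invoking the additivity of slopes for tensor products of semistable bundles on $\schff$ (a standard feature of the slope formalism underlying Theorem \ref{existence of HN decomp}), each factor $\trivbundle(-\lambda_i) \otimes \trivbundle(\kappa_j)$ is semistable of slope $\kappa_j - \lambda_i$ and of rank $\rk(\trivbundle(\lambda_i)) \cdot \rk(\trivbundle(\kappa_j))$. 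In particular, $\Vcal^\vee \otimes \Wcal$ admits a decomposition indexed by $(i,j)$ in which the $(i,j)$-th block is semistable of a single slope $\kappa_j - \lambda_i$.

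From the definition of $(-)^{\geq 0}$, the $(i,j)$-block contributes to $(\Vcal^\vee \otimes \Wcal)^\nonneg$ precisely when $\kappa_j - \lambda_i \geq 0$, equivalently $\mu(v_i) = \lambda_i \leq \kappa_j = \mu(w_j)$. Computing the degree of each such semistable block as rank times slope, I would obtain
\[ \deg(\Vcal^\vee \otimes \Wcal)^\nonneg = \sum_{\mu(v_i) \leq \mu(w_j)} m_i\, n_j \cdot \rk(\trivbundle(\lambda_i)) \cdot \rk(\trivbundle(\kappa_j)) \cdot (\kappa_j - \lambda_i). \]

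It then remains to identify each summand with the two-dimensional cross product $v_i \times w_j$. Writing out $v_i = \bigl(m_i \rk(\trivbundle(\lambda_i)),\, m_i \lambda_i \rk(\trivbundle(\lambda_i))\bigr)$ and similarly for $w_j$, a short expansion of the $2\times 2$ determinant yields exactly $m_i n_j \rk(\trivbundle(\lambda_i)) \rk(\trivbundle(\kappa_j))(\kappa_j - \lambda_i)$, matching the summand above. The final sentence of the lemma is then immediate: if $\mumin(\Vcal) \geq \mumax(\Wcal)$, every pair $(i,j)$ satisfies $\lambda_i \geq \kappa_j$, so the indexing set is either empty or consists of pairs with $\lambda_i = \kappa_j$, on which the cross product vanishes. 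I do not anticipate a substantive obstacle; the only non-elementary input is the preservation of semistability under tensor product on $\schff$, which is part of the ambient slope theory already in use throughout the paper.
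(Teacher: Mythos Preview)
Your argument is correct and is the natural direct computation. Note, however, that the paper does not supply its own proof of this lemma: it is quoted verbatim as \cite[Lemma 2.3.4]{Arizona_extvb} and used as a black box, so there is no in-paper proof to compare against. Your approach---decomposing $\Vcal^\vee \otimes \Wcal$ into semistable blocks via the HN decompositions, invoking semistability of tensor products of semistable bundles on $\schff$ with additive slopes, and then matching the degree of each nonnegative-slope block with the corresponding cross product---is exactly the standard route and almost certainly coincides with the proof in the cited reference.
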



Let us now proceed to the inductive part of our construction. 

\begin{prop}\label{degenerating sequence inductive part}
We can construct a sequence of vector bundles $\Ecal_1, \Ecal_2, \Ecal_3, \cdots$ so that the following statements hold for each $i = 1, 2, \cdots$. 

\begin{enumerate}[label = (\arabic*)]
\item\label{max common factor decomp for duals of Q and Ei} There exist decompositions 
\begin{equation*}
 \Qcal^\vee \simeq \Mcal_i \oplus \Rcal_i \quad\quad \text{ and } \quad\quad \Ecal_i^\vee \simeq \Mcal_i \oplus \Scal_i
\end{equation*}
which satisfy the following properties:

\begin{enumerate}[label=(\alph*)]
\item\label{slopewise dominance for complement part dual} $\Scal_i$ slopewise dominates $\Rcal_i$. 
\smallskip

\item\label{inequality for max slopes of complement parts dual} If $\Scal_i \neq 0$, we have $\mumax(\Scal_i)>\mumax(\Rcal_i)$. 
\smallskip

\item\label{ineqaulities for min slope of common factor dual} If $\Mcal_i \neq 0$ and $\Scal_i \neq 0$, we have $\mumin(\Mcal_i) \geq \mumax(\Scal_i) > \mumax(\Rcal_i)$.
\end{enumerate}
\smallskip

\item\label{construction of Ei} If $i>1$ we have
\[ \Ecal_i \simeq \begin{cases} \Qcal & \text{ if } \Ecal_{i-1} \simeq \Qcal\\ \Mcal_{i-1}^\vee \oplus \maxslopered{\Scal}_{i-1}^\vee & \text{ otherwise}\end{cases}\]
where $\maxslopered{\Scal}_{i-1}$ denotes the maximal slope reduction of $\Scal_{i-1}$ to $\Rcal_{i-1}$. 
\smallskip

\item\label{rank of Ei} $\rk(\Qcal) = \rk(\Ecal_i)$.
\smallskip

\item\label{integer slopes of Ei} all slopes of $\Ecal_i$ are integers.
\smallskip

\item\label{slopewise dominance of Q on Ei} $\Qcal$ slopewise dominates $\Ecal_i$. 


\end{enumerate}
\end{prop}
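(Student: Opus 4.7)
The plan is to build the sequence by induction on $i \geq 1$, verifying properties \ref{rank of Ei}, \ref{integer slopes of Ei}, \ref{slopewise dominance of Q on Ei} directly from the construction formula in \ref{construction of Ei}, and then deducing property \ref{max common factor decomp for duals of Q and Ei} from \ref{slopewise dominance of Q on Ei} via the two parts \ref{duality of slopewise dominance for equal rank case} and \ref{existence of max common factor decomp} of Lemma \ref{implications of slopewise dominance}. For the base case $i=1$, I would take $\Ecal_1$ as in Proposition \ref{construction of E1}; parts \ref{rank of E1}, \ref{integer slopes for E1}, \ref{slopewise dominance of Q on E1} of that proposition supply the required properties \ref{rank of Ei}, \ref{integer slopes of Ei}, \ref{slopewise dominance of Q on Ei}, the formula \ref{construction of Ei} is vacuous, and \ref{max common factor decomp for duals of Q and Ei} is extracted by first applying Lemma \ref{implications of slopewise dominance}\ref{duality of slopewise dominance for equal rank case} (using the equal ranks to turn slopewise dominance of $\Qcal$ on $\Ecal_1$ into slopewise dominance of $\Ecal_1^\vee$ on $\Qcal^\vee$) and then applying Lemma \ref{implications of slopewise dominance}\ref{existence of max common factor decomp} to the pair $(\Qcal^\vee, \Ecal_1^\vee)$.

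In the inductive step, given $\Ecal_i$ with all five properties, I would define $\Ecal_{i+1}$ by the formula in \ref{construction of Ei}. The case $\Ecal_i \simeq \Qcal$ is entirely trivial, so assume $\Ecal_i \not\simeq \Qcal$; then the decomposition at step $i$ must have $\Scal_i \neq 0$, since otherwise the equal-rank identity \ref{rank of Ei} would force $\Rcal_i = 0$ and hence $\Ecal_i^\vee \simeq \Mcal_i \simeq \Qcal^\vee$. So $\maxslopered{\Scal}_i$ is nontrivially defined and $\Ecal_{i+1} = \Mcal_i^\vee \oplus \maxslopered{\Scal}_i^\vee$. Property \ref{rank of Ei} at step $i+1$ reads $\rk(\Ecal_{i+1}) = \rk(\Mcal_i) + \rk(\Scal_i) = \rk(\Ecal_i^\vee) = \rk(\Qcal)$ by Lemma \ref{basic properties of max slope reduction}(2), and property \ref{integer slopes of Ei} is immediate from Lemma \ref{basic properties of max slope reduction}(5). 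Once property \ref{slopewise dominance of Q on Ei} at step $i+1$ is established, property \ref{max common factor decomp for duals of Q and Ei} at step $i+1$ follows by the same dualize-and-decompose argument as in the base case.

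The hard part is to verify property \ref{slopewise dominance of Q on Ei} at step $i+1$, namely that $\Qcal$ slopewise dominates $\Ecal_{i+1}$. Since the two bundles have equal rank, Lemma \ref{implications of slopewise dominance}\ref{duality of slopewise dominance for equal rank case} lets me replace this by the dual assertion that $\Ecal_{i+1}^\vee = \Mcal_i \oplus \maxslopered{\Scal}_i$ slopewise dominates $\Qcal^\vee = \Mcal_i \oplus \Rcal_i$. The slope inequality chain $\mumin(\Mcal_i) \geq \mumax(\Scal_i) > \mumax(\Rcal_i) = \mumax(\maxslopered{\Scal}_i)$, coming from \ref{ineqaulities for min slope of common factor dual} at step $i$ together with Lemma \ref{basic properties of max slope reduction}(1), ensures that in both direct sums the HN slopes of $\Mcal_i$ sit strictly above those of the other summand. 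Consequently the HN polygons of both sums split into a common $\Mcal_i$-block at the top followed by their respective lower blocks, and the desired slopewise dominance reduces slope-by-slope to $\maxslopered{\Scal}_i$ slopewise dominating $\Rcal_i$, which is exactly Lemma \ref{basic properties of max slope reduction}(4). This closes the induction.
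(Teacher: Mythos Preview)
Your proposal is correct and follows essentially the same approach as the paper's proof: both set up the base case via Proposition \ref{construction of E1}, and in the inductive step both establish slopewise dominance of $\Ecal_{i+1}^\vee = \Mcal_i \oplus \maxslopered{\Scal}_i$ over $\Qcal^\vee = \Mcal_i \oplus \Rcal_i$ by combining the slope ordering $\mumin(\Mcal_i) \geq \mumax(\Scal_i) > \mumax(\Rcal_i) = \mumax(\maxslopered{\Scal}_i)$ with Lemma \ref{basic properties of max slope reduction}, and then deduce properties \ref{max common factor decomp for duals of Q and Ei} and \ref{slopewise dominance of Q on Ei} via Lemma \ref{implications of slopewise dominance}. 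Your explicit verification that $\Scal_i \neq 0$ (and hence $\Rcal_i \neq 0$ by the equal-rank condition) when $\Ecal_i \not\simeq \Qcal$ is a point the paper handles separately in Lemma \ref{simple observations about Ei}; you might also note that the case $\Mcal_i = 0$ is immediate, since then the block-splitting step is vacuous.
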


\begin{proof}
Let us take $\Ecal_1$ as in Proposition \ref{construction of E1}. We deduce the statements \ref{rank of Ei}, \ref{integer slopes of Ei} and \ref{slopewise dominance of Q on Ei} for $i=1$ directly follows from Proposition \ref{construction of E1}. Moreover, we obtain the statement \ref{max common factor decomp for duals of Q and Ei} for $i=1$ as a formal consequence of the statements \ref{rank of Ei} and \ref{slopewise dominance of Q on Ei} by Lemma \ref{implications of slopewise dominance}. Since the statement \ref{construction of Ei} for $i = 1$ vacuously holds, we have thus verified all statements for $i = 1$. 

We now proceed by induction on $i$. If $\Ecal_i \simeq \Qcal$, the induction step becomes trivial; in fact, if we take $\Ecal_{i+1} := \Qcal$, the statement \ref{construction of Ei} for $i+1$ is obvious by construction while the other statements \ref{max common factor decomp for duals of Q and Ei}, \ref{rank of Ei}, \ref{integer slopes of Ei} and \ref{slopewise dominance of Q on Ei} for $i+1$ immediately follow from the induction hypothesis as $\Ecal_{i+1} \simeq \Ecal_i$. We thus assume from now on that $\Ecal_i \not\simeq \Qcal$. By the induction hypothesis, the statement \ref{max common factor decomp for duals of Q and Ei} for $i$ yields decompositions 
\begin{equation}\label{max common slope decompositions for induction step}
\Qcal^\vee \simeq \Mcal_i \oplus \Rcal_i \quad\quad \text{ and } \quad\quad \Ecal_i^\vee \simeq \Mcal_i \oplus \Scal_i
\end{equation}
where $\Rcal_i$ slopewise dominates $\Scal_i$. Moreover, the statement \ref{integer slopes of Ei} for $i$ and the condition \ref{integer slopes of E, F, Q, reduced} in Proposition \ref{reduced key inequality} together imply that all slopes of $\Rcal_i$ and $\Scal_i$ are integers. Hence it makes sense to consider the maximal slope reduction of $\Scal_i$ to $\Rcal_i$, which we denote by $\maxslopered{\Scal}_i$. Let us now take
\begin{equation}\label{construction of Ei induction step}
\Ecal_{i+1}:= \Mcal_i^\vee \oplus \maxslopered{\Scal}_i^\vee.
\end{equation}
\begin{center}
\begin{figure}[h]
\begin{tikzpicture}	
		\coordinate (left) at (0, 0);
		\coordinate (q1) at (2.5, 3.5);
		\coordinate (q2) at (4, 4);
		\coordinate (q3) at (5, 3.7);

		\coordinate (p0) at (0.5, 1.5);
		\coordinate (p1) at (1.5, 2.5);
		\coordinate (p2) at (3.5, 3);
		\coordinate (p3) at (4.5, 2.5);
		\coordinate (p4) at (5, 1.5);
				
		\draw[step=1cm,thick, color=red] (left) -- (p0) --  (p1);
		\draw[step=1cm,thick, color=blue] (p1) -- (q1) -- (q2) -- (q3);
		\draw[step=1cm,thick, color=green] (p1) -- (p2) -- (p3) -- (p4);

		\draw [fill] (q1) circle [radius=0.05];		
		\draw [fill] (q2) circle [radius=0.05];		
		\draw [fill] (q3) circle [radius=0.05];		
		\draw [fill] (left) circle [radius=0.05];
		
		\draw [fill] (p0) circle [radius=0.05];		
		\draw [fill] (p1) circle [radius=0.05];		
		\draw [fill] (p2) circle [radius=0.05];		
		\draw [fill] (p3) circle [radius=0.05];		
		\draw [fill] (p4) circle [radius=0.05];		


		
		\path (q3) ++(0.2, -0.4) node {$\HN(\Ecal_i^\vee)$};
		\path (p4) ++(-0.2, -0.4) node {$\HN(\Qcal^\vee)$};

		\path (p0) ++(-0.2, 0.3) node {\color{red}$\Mcal_i$};
		\path (q1) ++(0, 0.3) node {\color{blue}$\Scal_i$};
		\path (p2) ++(0, -0.5) node {\color{green}$\Rcal_i$};

\end{tikzpicture}
\begin{tikzpicture}[scale=0.4]
        \pgfmathsetmacro{\textycoordinate}{6}
		\draw[->, line width=0.6pt] (0, \textycoordinate) -- (1.5,\textycoordinate);
		\draw (0,0) circle [radius=0.00];	
        \hspace{0.2cm}
\end{tikzpicture}
\begin{tikzpicture}
		\pgfmathsetmacro{\reducedycoordinate}{2.5/4+2.5}
	
		\coordinate (left) at (0, 0);
		\coordinate (q1) at (2.5, 3.5);
		\coordinate (q2) at (4, 4);
		\coordinate (q3) at (5, 3.7);

		\coordinate (q2') at (4, \reducedycoordinate);
		\coordinate (q3') at (5, \reducedycoordinate-0.3);

		\coordinate (p0) at (0.5, 1.5);
		\coordinate (p1) at (1.5, 2.5);
		\coordinate (p2) at (3.5, 3);
		\coordinate (p3) at (4.5, 2.5);
		\coordinate (p4) at (5, 1.5);
				
		\draw[step=1cm,thick, color=red] (left) -- (p0) --  (p1);
		\draw[step=1cm,thick,dashed, color=blue] (p1) -- (q1) -- (q2) -- (q3);
		\draw[step=1cm,thick, color=blue] (p1) -- (q2') -- (q3');
		\draw[step=1cm,thick, color=green] (p2) -- (p3) -- (p4);

		\draw [fill] (q1) circle [radius=0.05];		
		\draw [fill] (q2) circle [radius=0.05];		
		\draw [fill] (q3) circle [radius=0.05];		
		\draw [fill] (left) circle [radius=0.05];

		\draw [fill] (q2') circle [radius=0.05];		
		\draw [fill] (q3') circle [radius=0.05];	
		
		\draw [fill] (p0) circle [radius=0.05];		
		\draw [fill] (p1) circle [radius=0.05];		
		\draw [fill] (p2) circle [radius=0.05];		
		\draw [fill] (p3) circle [radius=0.05];		
		\draw [fill] (p4) circle [radius=0.05];	

		\draw[->, line width=0.6pt, color=blue] (3, 3.5) -- (3,3);	


		
		\path (q3') ++(1.1, 0) node {$\HN(\Ecal_{i+1}^\vee)$};
		\path (p4) ++(-0.2, -0.4) node {$\HN(\Qcal^\vee)$};

		\path (p0) ++(-0.2, 0.3) node {\color{red}$\Mcal_i$};
		\path (q2') ++(0, 0.3) node {\color{blue}$\maxslopered{\Scal}_i$};
		\path (p3) ++(-0.3, -0.5) node {\color{green}$\Rcal_i$};

\end{tikzpicture}
\setlength{\belowcaptionskip}{-0.3in}
\caption{Construction of the sequence $(\Ecal_i)$}\label{construction of the sequence Ei}
\end{figure}
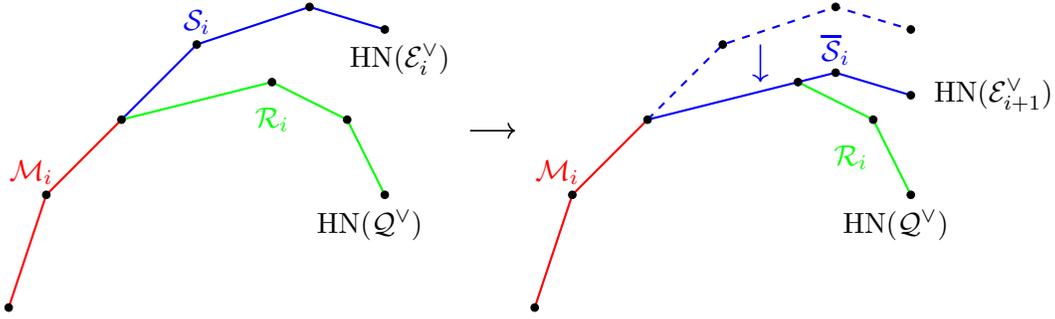
\end{center}
The statement \ref{construction of Ei} for $i+1$ is obvious by our definition of $\Ecal_{i+1}$ in \eqref{construction of Ei induction step}. We also verify the statement \ref{rank of Ei} for $i+1$ by computing
\begin{equation}\label{rank computation for Ei+1}
\rk(\Ecal_{i+1}) = \rk(\Mcal_i) + \rk(\maxslopered{\Scal}_i) = \rk(\Mcal_i) + \rk(\Scal_i) = \rank(\Ecal_i) = \rk(\Qcal)
\end{equation}
where for each equality we use \eqref{construction of Ei induction step}, Lemma \ref{basic properties of max slope reduction}, \eqref{max common slope decompositions for induction step} and the statement \ref{rank of Ei} for $i$. Moreover, since all slopes of $\Mcal_i$ and $\Scal_i$ are integers by the statement \ref{integer slopes of Ei} for $i$, we obtain the statement \ref{integer slopes of Ei} for $i+1$ using \eqref{construction of Ei induction step} and Lemma \ref{basic properties of max slope reduction}. Furthermore, since $\maxslopered{\Scal}_i$ slopewise dominates $\Rcal_i$ by Lemma \ref{basic properties of max slope reduction}, we deduce slopewise dominance of $\Ecal_{i+1}^\vee$ on $\Qcal^\vee$ from decompositions 
\[\Ecal_{i+1}^\vee = \Mcal_i \oplus \maxslopered{\Scal}_i \quad\quad \text{ and } \quad\quad \Qcal^\vee = \Mcal_i \oplus \Rcal_i\]
as given by \eqref{max common slope decompositions for induction step} and \eqref{construction of Ei induction step}, and consequently verify the statement \ref{max common factor decomp for duals of Q and Ei} for $i+1$ by Lemma \ref{implications of slopewise dominance}. 
In addition, slopewise dominance of $\Ecal_{i+1}^\vee$ on $\Qcal^\vee$ implies the statement \ref{slopewise dominance of Q on Ei} for $i+1$ by Lemma \ref{implications of slopewise dominance} and \eqref{rank computation for Ei+1}. We thus have all statements in Proposition \ref{degenerating sequence inductive part} for $i+1$, thereby concluding our proof by induction. 
\end{proof}

\begin{remark}
From our construction it is not hard to see that $\Ecal_{i+1}^\vee$ slopewise dominates $\Ecal_i^\vee$ for all $i$, as we proposed while sketching our proof of Proposition \ref{reduced key inequality}. Although this ``dually degenerating" property won't explicitly appear in our argument, it will play a crucial role in the proof of Proposition \ref{decreasing cEF(Q) for degenerating sequence} under the guise of relations between slopes of $\Scal_i$ and $\maxslopered{\Scal}_i$. 
\end{remark}

We record a couple of simple but useful observations about the construction in Proposition \ref{degenerating sequence inductive part}.
\begin{lemma}\label{simple observations about Ei}
Let $\Ecal_1, \Ecal_2, \cdots$ be as in Proposition \ref{degenerating sequence inductive part}. For each $i = 1, 2, \cdots$ we take decompositions
\begin{equation*}
\Qcal^\vee \simeq \Mcal_i \oplus \Rcal_i \quad\quad \text{ and } \quad\quad \Ecal_i^\vee \simeq \Mcal_i \oplus \Scal_i
\end{equation*}
as given by the statement \ref{max common factor decomp for duals of Q and Ei} in Proposition \ref{degenerating sequence inductive part}. 
\begin{enumerate}[label=(\arabic*)]
\item The bundle $\Mcal_i$ represents the common part of $\HN(\Qcal^\vee)$ and $\HN(\Ecal_i^\vee)$.
\smallskip

\item If $\Ecal_i \not\simeq \Qcal$, we have $\Rcal_i \neq 0$ and $\Scal_i \neq 0$. 
\end{enumerate}
\end{lemma}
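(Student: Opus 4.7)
The plan is to deduce both statements as immediate formal consequences of Proposition~\ref{degenerating sequence inductive part} combined with Lemma~\ref{implications of slopewise dominance}. There is no substantive obstacle here; the lemma merely packages observations that will be convenient to cite later in the analysis of how $c_{\Ecal, \Fcal}(\Qcal)$ evolves along the sequence $(\Ecal_i)$.

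For statement (1), I would appeal directly to the remark following Lemma~\ref{implications of slopewise dominance}, which identifies the common direct summand $\Dcal$ produced by that lemma as the bundle whose Harder-Narasimhan polygon is precisely the common part of the two HN polygons being compared. The decompositions $\Qcal^\vee \simeq \Mcal_i \oplus \Rcal_i$ and $\Ecal_i^\vee \simeq \Mcal_i \oplus \Scal_i$ furnished by Proposition~\ref{degenerating sequence inductive part}(1) are exactly those produced by Lemma~\ref{implications of slopewise dominance} applied to the slopewise dominance of $\Ecal_i^\vee$ over $\Qcal^\vee$ (which itself follows from Proposition~\ref{degenerating sequence inductive part}(5) together with Lemma~\ref{implications of slopewise dominance}(3) after invoking the rank equality in Proposition~\ref{degenerating sequence inductive part}(3)). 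Hence the remark identifies $\Mcal_i$ as the common part of $\HN(\Qcal^\vee)$ and $\HN(\Ecal_i^\vee)$.

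For statement (2), the plan is a rank comparison. From Proposition~\ref{degenerating sequence inductive part}(3) we have $\rk(\Ecal_i) = \rk(\Qcal)$ and therefore $\rk(\Ecal_i^\vee) = \rk(\Qcal^\vee)$. Combined with the decompositions, this yields
\[
\rk(\Rcal_i) \;=\; \rk(\Qcal^\vee) - \rk(\Mcal_i) \;=\; \rk(\Ecal_i^\vee) - \rk(\Mcal_i) \;=\; \rk(\Scal_i),
\]
so $\Rcal_i = 0$ if and only if $\Scal_i = 0$. In that case the two decompositions both reduce to isomorphisms with $\Mcal_i$, giving $\Qcal^\vee \simeq \Mcal_i \simeq \Ecal_i^\vee$; dualizing would force $\Qcal \simeq \Ecal_i$, contradicting the hypothesis $\Ecal_i \not\simeq \Qcal$. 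Therefore both $\Rcal_i$ and $\Scal_i$ are nonzero, completing the plan.
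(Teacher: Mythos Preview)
Your proposal is correct and follows essentially the same approach as the paper. For (1) both you and the paper invoke the remark after Lemma~\ref{implications of slopewise dominance}; for (2) the paper simply cites (1), whereas you spell out the underlying rank comparison directly, but this is the same trivial argument made explicit.
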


\begin{proof}
The first statement follows from the remark after Lemma \ref{implications of slopewise dominance}. The second statement is an immediate consequence of the first statement. 
\end{proof}

Our construction process turns out to be essentially finite in the sense that the sequence stabilizes after finitely many steps, as the following proposition shows. 

\begin{prop}\label{degenerating sequence finiteness}
Let $\Ecal_1, \Ecal_2, \cdots$ be as in Proposition \ref{degenerating sequence inductive part}. There exists $r >0$ such that $\Ecal_i \simeq \Qcal$ for all $i \geq r$. 
\end{prop}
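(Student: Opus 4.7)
The plan is to exhibit a nonnegative integer-valued invariant of the sequence $\Ecal_1, \Ecal_2, \ldots$ that strictly decreases at every step where $\Ecal_i \not\simeq \Qcal$; once the invariant reaches zero, we must have $\Ecal_i \simeq \Qcal$, and the recursive rule \ref{construction of Ei} in Proposition \ref{degenerating sequence inductive part} then propagates the identification $\Ecal_j \simeq \Qcal$ to every subsequent $j$. The natural candidate is
\[ N(i) := \deg(\Ecal_i^\vee) - \deg(\Qcal^\vee). \]

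First I would verify that $N(i)$ is a nonnegative integer. Integrality follows from \ref{integer slopes of Ei} in Proposition \ref{degenerating sequence inductive part} together with condition \ref{integer slopes of E, F, Q, reduced} in Proposition \ref{reduced key inequality}. For nonnegativity, \ref{rank of Ei} and \ref{slopewise dominance of Q on Ei} in Proposition \ref{degenerating sequence inductive part}, combined with Lemma \ref{implications of slopewise dominance}\ref{duality of slopewise dominance for equal rank case}, show that $\Ecal_i^\vee$ slopewise dominates $\Qcal^\vee$; since the two bundles have equal rank, summing the slopewise inequalities on each unit interval $[j-1,j]$ yields $\deg(\Ecal_i^\vee) \geq \deg(\Qcal^\vee)$.

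Next I would establish strict monotonicity: $N(i+1) < N(i)$ whenever $\Ecal_i \not\simeq \Qcal$. Under this assumption Lemma \ref{simple observations about Ei} gives $\Scal_i, \Rcal_i \neq 0$, and \ref{inequality for max slopes of complement parts dual} in Proposition \ref{degenerating sequence inductive part} then provides the strict inequality $\mumax(\Scal_i) > \mumax(\Rcal_i)$. By Definition \ref{max slope reduction definition}, the bundle $\maxslopered{\Scal}_i$ is obtained from $\Scal_i$ by replacing the summand $\Scal_i^{>\mumax(\Rcal_i)}$, which is nonzero thanks to the strict inequality, with copies of $\trivbundle(\mumax(\Rcal_i))$ of the same total rank. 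Since every slope being replaced is strictly greater than $\mumax(\Rcal_i)$, this operation strictly decreases the degree. Combining with the formula $\Ecal_{i+1}^\vee \simeq \Mcal_i \oplus \maxslopered{\Scal}_i$ from \ref{construction of Ei} in Proposition \ref{degenerating sequence inductive part} and the decomposition $\Ecal_i^\vee \simeq \Mcal_i \oplus \Scal_i$ yields $\deg(\Ecal_{i+1}^\vee) < \deg(\Ecal_i^\vee)$, hence $N(i+1) < N(i)$.

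Putting the two pieces together, $(N(i))$ is a strictly decreasing sequence of nonnegative integers as long as $\Ecal_i \not\simeq \Qcal$; it therefore stabilizes after finitely many steps at some index $r$ with $\Ecal_r \simeq \Qcal$, and then \ref{construction of Ei} in Proposition \ref{degenerating sequence inductive part} forces $\Ecal_i \simeq \Qcal$ for all $i \geq r$. I do not anticipate any serious obstacle: the argument is essentially a degree-tracking exercise, with the recursive rule for $\Ecal_{i+1}$ engineered so that the degree strictly drops at every nontrivial step.
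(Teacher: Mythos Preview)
Your proof is correct and takes a genuinely different route from the paper's. The paper argues by showing that the rank of the ``common part'' $\Mcal_i$ strictly increases at each step where $\Ecal_i \not\simeq \Qcal$: since $\maxslopered{\Scal}_i$ and $\Rcal_i$ share the same maximal slope, their HN polygons have a nontrivial common initial segment $\Tcal_i$, so $\rk(\Mcal_{i+1}) \geq \rk(\Mcal_i \oplus \Tcal_i) > \rk(\Mcal_i)$, contradicting the bound $\rk(\Mcal_i) \leq \rk(\Qcal)$. Your invariant $N(i) = \deg(\Ecal_i^\vee) - \deg(\Qcal^\vee)$ is arguably cleaner: strict monotonicity is read off directly from the definition of maximal slope reduction without having to identify the new common part, and nonnegativity follows from summing the slopewise inequalities. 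The paper's invariant, on the other hand, gives slightly more geometric information about how the HN polygons of $\Ecal_i^\vee$ converge to that of $\Qcal^\vee$ (the common part grows), which is closer in spirit to the ``degenerating'' picture emphasized in the surrounding discussion. Both approaches are short and require essentially the same inputs from Proposition~\ref{degenerating sequence inductive part} and Lemma~\ref{simple observations about Ei}. One minor remark: integrality of $N(i)$ is automatic since degrees of vector bundles are always integers, so the appeal to integer slopes there is unnecessary (though harmless).
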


\begin{proof}
By the statement \ref{construction of Ei} in Proposition \ref{degenerating sequence inductive part}, it suffices to show that $\Ecal_i \simeq \Qcal$ for some $i$. Suppose for contradiction that $\Ecal_i \not\simeq \Qcal$ for all $i$. Let us take decompositions 
\begin{equation}\label{max common slope decompositions for stabilization}
\Qcal^\vee \simeq \Mcal_i \oplus \Rcal_i \quad\quad \text{ and } \quad\quad \Ecal_i^\vee \simeq \Mcal_i \oplus \Scal_i
\end{equation}
as given by the statement \ref{max common factor decomp for duals of Q and Ei} in Proposition \ref{degenerating sequence inductive part}. As $\Ecal_i \not\simeq \Qcal$ by our assumption, the statement \ref{construction of Ei} in Proposition \ref{degenerating sequence inductive part} yields a decomposition
\begin{equation}\label{construction of Ei for stabiliztion}
\Ecal_{i+1}^\vee \simeq \Mcal_i \oplus \maxslopered{\Scal}_i
\end{equation}
where $\maxslopered{\Scal}_i$ denotes the maximal slope reduction of $\Scal_i$ to $\Rcal_i$. Since $\Rcal_i$ and $\Scal_i$ are both nonzero by Lemma \ref{simple observations about Ei}, the polygons $\HN(\Rcal_i)$ and $\HN(\maxslopered{\Scal}_i)$ must have a nontrivial common part, which we represent by a nonzero vector bundle $\Tcal_i$ on $\schff$. Then we deduce from the decompositions \eqref{max common slope decompositions for stabilization} and \eqref{construction of Ei for stabiliztion}
that the common part of $\HN(\Qcal^\vee)$ and $\HN(\Ecal_{i+1}^\vee)$ should include $\HN(\Mcal_i \oplus \Tcal_i)$. 
Now by Lemma \ref{simple observations about Ei} we find
\[\rk(\Mcal_{i+1}) \geq \rk(\Mcal_i \oplus \Tcal_i) = \rk(\Mcal_i) + \rk(\Tcal_i) > \rk(\Mcal_i).\]
In particular, the sequence $\big(\rk(\Mcal_i)\big)$ should be unbounded. However, this is impossible since we have $\rk(\Mcal_i) \leq \rk(\Qcal)$ by \eqref{max common slope decompositions for stabilization}. We thus complete the proof by contradiction. 
\end{proof}

We now prove the essential property of our sequence. 

\begin{prop}\label{decreasing cEF(Q) for degenerating sequence}
Let $\Ecal_1, \Ecal_2, \cdots$ be as in Proposition \ref{degenerating sequence inductive part}. For each $i = 1, 2, \cdots,$ we take decompositions
\begin{equation}\label{max common slope decompositions for decreasing cEF(Q)}
\Qcal^\vee \simeq \Mcal_i \oplus \Rcal_i \quad\quad \text{ and } \quad\quad \Ecal_i^\vee \simeq \Mcal_i \oplus \Scal_i
\end{equation}
as given by the statement \ref{max common factor decomp for duals of Q and Ei} in Proposition \ref{degenerating sequence inductive part}. Then for each $i = 1, 2, \cdots$, we have an inequality
\begin{equation}\label{inductive inequality for cEF(Q)}
c_{\Ecal_i, \Fcal}(\Qcal) \geq c_{\Ecal_{i+1}, \Fcal}(\Qcal)
\end{equation}
where equality holds only if $\Ecal_i \simeq \Qcal$ or $\rk(\Scal_i^\vee) = \rk(\Fcal^{>\mumin(\Scal_i^\vee)})$. 
\end{prop}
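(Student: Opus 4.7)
The plan is to first dispose of the trivial case $\Ecal_i \simeq \Qcal$, where Proposition \ref{degenerating sequence inductive part} forces $\Ecal_{i+1} \simeq \Qcal$ and hence equality in \eqref{inductive inequality for cEF(Q)}. Otherwise the same proposition supplies decompositions $\Ecal_i^\vee \simeq \Mcal_i \oplus \Scal_i$ and $\Ecal_{i+1}^\vee \simeq \Mcal_i \oplus \maxslopered{\Scal}_i$. Substituting into Definition \ref{definition of cEF(Q)}, the $\Qcal^\vee \otimes \Qcal$ and $\Qcal^\vee \otimes \Fcal$ terms (independent of $i$) cancel in the difference $c_{\Ecal_i, \Fcal}(\Qcal) - c_{\Ecal_{i+1}, \Fcal}(\Qcal)$, and so do the $\Mcal_i$ contributions on both sides, reducing \eqref{inductive inequality for cEF(Q)} to
\[
\deg(\Scal_i \otimes \Fcal)^\nonneg - \deg(\maxslopered{\Scal}_i \otimes \Fcal)^\nonneg \;\geq\; \deg(\Scal_i \otimes \Qcal)^\nonneg - \deg(\maxslopered{\Scal}_i \otimes \Qcal)^\nonneg.
\]
Setting $\alpha := \mumax(\Rcal_i)$ and using Lemma \ref{implications of slopewise dominance} (which guarantees $\Scal_i$ and $\Rcal_i$ share no slopes and $\mumax(\Scal_i) > \alpha$), I would write $\Scal_i = \Scal_i^{>\alpha} \oplus \Scal_i^{<\alpha}$ and $\maxslopered{\Scal}_i = \trivbundle(\alpha)^{\oplus N} \oplus \Scal_i^{<\alpha}$ with $N := \rk(\Scal_i^{>\alpha})$; the common summand $\Scal_i^{<\alpha}$ cancels from both sides.

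Next I would apply Lemma \ref{cross product representation of degrees} to the remaining pieces. Since all slopes of $\Ecal, \Fcal, \Qcal$, and hence of $\Scal_i$, are integers, each simple factor $\trivbundle(\lambda)$ is a line bundle; writing $\Scal_i^{>\alpha} \simeq \bigoplus_{\lambda_j > \alpha} \trivbundle(\lambda_j)^{\oplus n_j}$ and expanding the cross-product formula, a rearrangement that recognizes $\lambda \mapsto \sum_{k: \mu_k \geq -\lambda} m_k(\mu_k + \lambda)$ as an antiderivative of the step function $t \mapsto \rk(\Wcal^{\geq -t})$ will yield
\[
\deg(\Scal_i^{>\alpha} \otimes \Wcal)^\nonneg - \deg(\trivbundle(\alpha)^{\oplus N} \otimes \Wcal)^\nonneg \;=\; \sum_{\lambda_j > \alpha} n_j \int_\alpha^{\lambda_j} \rk(\Wcal^{\geq -t})\, dt
\]
for any vector bundle $\Wcal$. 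Subtracting the $\Wcal = \Qcal$ identity from the $\Wcal = \Fcal$ identity produces
\[
\sum_{\lambda_j > \alpha} n_j \int_\alpha^{\lambda_j} \bigl[\rk(\Fcal^{\geq -t}) - \rk(\Qcal^{\geq -t})\bigr]\, dt,
\]
which is nonnegative by slopewise dominance of $\Fcal$ on $\Qcal$ (the condition \ref{slopewise dominance of F on Q, reduced} combined with Proposition \ref{equivalence of two characterizations for subbundles}), thereby establishing \eqref{inductive inequality for cEF(Q)}.

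For the equality case, since each $n_j$ is positive and each integrand is a nonnegative step function, vanishing of the total sum forces $\rk(\Fcal^{\geq \mu}) = \rk(\Qcal^{\geq \mu})$ for every $\mu$ in the open interval $(-\mumax(\Scal_i), -\alpha)$. Because $\mumin(\Scal_i^\vee) = -\mumax(\Scal_i)$, letting $\mu$ approach this endpoint from above gives $\rk(\Fcal^{>\mumin(\Scal_i^\vee)}) = \rk(\Qcal^{>\mumin(\Scal_i^\vee)})$, and it remains to identify the right-hand side with $\rk(\Scal_i^\vee)$. For this I would exploit the decomposition $\Qcal^\vee = \Mcal_i \oplus \Rcal_i$ together with the slope gap $\mumin(\Mcal_i) \geq \mumax(\Scal_i) > \alpha \geq \mumax(\Rcal_i)$ from Lemma \ref{implications of slopewise dominance}: no slope of $\Qcal^\vee$ lies in $(\alpha, \mumax(\Scal_i))$, so the duality $\rk(\Qcal^{>\mu}) = \rk(\Qcal) - \rk(\Qcal^{\vee,\geq -\mu})$ from Lemma \ref{rank and degree of dual bundle} yields $\rk(\Qcal^{>\mumin(\Scal_i^\vee)}) = \rk(\Rcal_i)$, which in turn equals $\rk(\Scal_i^\vee)$ by the equal-rank statement of Proposition \ref{degenerating sequence inductive part}. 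The main obstacle is this equality analysis: the integral formula only delivers a rank-equality over an open slope interval, and translating it to the clean conclusion $\rk(\Scal_i^\vee) = \rk(\Fcal^{>\mumin(\Scal_i^\vee)})$ requires the slope-gap and duality bookkeeping just outlined.
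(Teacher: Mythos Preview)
Your proof is correct and takes a genuinely different computational route from the paper. One minor slip: the claim that $\Scal_i$ and $\Rcal_i$ share no slopes is not justified by Lemma \ref{implications of slopewise dominance} (that lemma only says $\Mcal_i$ is the common initial segment of the two HN polygons; nothing prevents $\alpha = \mumax(\Rcal_i)$ from reappearing later as a slope of $\Scal_i$). Fortunately you do not need this claim: simply write $\Scal_i = \Scal_i^{>\alpha} \oplus \Scal_i^{\leq \alpha}$ and $\maxslopered{\Scal}_i = \trivbundle(\alpha)^{\oplus N} \oplus \Scal_i^{\leq \alpha}$ as in Definition \ref{max slope reduction definition}, and the common summand $\Scal_i^{\leq \alpha}$ still cancels.

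The interesting divergence from the paper is in how you handle the reduced inequality. The paper expands both sides via Lemma \ref{cross product representation of degrees}, bounds the $\Fcal$-side below by dropping cross-product terms to obtain $r \cdot \rk((\Fcal^\vee)^{\leq \alpha}) \cdot (\mu(\Scal_i^{>\alpha}) - \alpha)$, computes the $\Rcal_i^\vee$-side exactly as $r \cdot \rk(\Rcal_i) \cdot (\mu(\Scal_i^{>\alpha}) - \alpha)$, and then compares the two rank factors. Your integral identity
\[
\deg(\Scal_i^{>\alpha} \otimes \Wcal)^\nonneg - \deg(\trivbundle(\alpha)^{\oplus N} \otimes \Wcal)^\nonneg \;=\; \sum_{\lambda_j > \alpha} n_j \int_\alpha^{\lambda_j} \rk(\Wcal^{\geq -t})\, dt
\]
is cleaner: it keeps $\Qcal$ (rather than $\Rcal_i^\vee$) on the right-hand side so that the two sides are visibly parallel, and the difference becomes an integral of $\rk(\Fcal^{\geq -t}) - \rk(\Qcal^{\geq -t})$, which is nonnegative pointwise by condition \ref{slopewise dominance of F on Q, reduced} and Proposition \ref{equivalence of two characterizations for subbundles}. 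This also streamlines the equality analysis: where the paper must track when two separate inequalities become equalities, your integrand vanishes on the whole interval $(\alpha, \mumax(\Scal_i))$, and the limit $\mu \searrow \mumin(\Scal_i^\vee)$ combined with the slope gap $\mumin(\Mcal_i) \geq \mumax(\Scal_i) > \alpha$ gives the desired identification $\rk(\Fcal^{>\mumin(\Scal_i^\vee)}) = \rk(\Qcal^{>\mumin(\Scal_i^\vee)}) = \rk(\Rcal_i) = \rk(\Scal_i^\vee)$ directly. The paper's approach has the virtue of staying within the cross-product formalism already established, while yours trades that for a more conceptual antiderivative viewpoint.
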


\begin{proof}
If $\Ecal_i \simeq \Qcal$, the proof is trivial as we have $\Ecal_{i+1} \simeq \Qcal \simeq \Ecal_i$ by the statement \ref{construction of Ei} in Proposition \ref{degenerating sequence inductive part}. We thus assume from now on that $\Ecal_i \not\simeq \Qcal$. 

Let $\maxslopered{\Scal}_i$ denote the maximal slope reduction of $\Scal_i$ to $\Rcal_i$. Then we have a decomposition
\begin{equation}\label{construction of Ei for decreasing cEF(Q)}
\Ecal_{i+1}^\vee \simeq \Mcal_i \oplus \maxslopered{\Scal}_i
\end{equation}
by the statement \ref{construction of Ei} in Proposition \ref{degenerating sequence inductive part}. Moreover, since $\Ecal_i \not\simeq \Qcal$ we have $\Rcal_i \neq 0$ and $\Scal_i \neq 0$ by Lemma \ref{simple observations about Ei}. Hence the statement \ref{max common factor decomp for duals of Q and Ei} in Proposition \ref{degenerating sequence inductive part} yields
\begin{equation}\label{order of slopes in max common factor decomps}
\mumin(\Mcal_i) \geq \mumax(\Scal_i) > \mumax(\Rcal_i) = \mumax(\maxslopered{\Scal}_i) \quad\quad \text{ if } \Mcal_i \neq 0.
\end{equation}
Then by Lemma \ref{cross product representation of degrees} we obtain
\begin{equation}\label{zero degree formula for max common factor decomps}
\deg(\Mcal_i^\vee \otimes \Scal_i)^\nonneg = \deg(\Mcal_i^\vee \otimes \maxslopered{\Scal}_i)^\nonneg = 0.
\end{equation}
Now we use \eqref{max common slope decompositions for decreasing cEF(Q)}, \eqref{construction of Ei for decreasing cEF(Q)} and \eqref{zero degree formula for max common factor decomps} to find
\begin{equation*}
\begin{aligned}
\deg(\Ecal_i^\vee \otimes \Fcal)^\nonneg &= \deg((\Mcal_i \oplus \Scal_i) \otimes \Fcal)^\nonneg
= \deg(\Mcal_i \otimes \Fcal)^\nonneg + \deg(\Scal_i \oplus \Fcal)^\nonneg,\\
\deg(\Ecal_{i+1}^\vee \otimes \Fcal)^\nonneg &= \deg((\Mcal_i \oplus \maxslopered{\Scal}_i) \otimes \Fcal)^\nonneg
= \deg(\Mcal_i \otimes \Fcal)^\nonneg + \deg(\maxslopered{\Scal}_i \otimes \Fcal)^\nonneg,\\
\deg(\Ecal_i^\vee \otimes \Qcal)^\nonneg &= \deg((\Mcal_i \oplus \Scal_i) \otimes (\Mcal_i^\vee \oplus \Rcal_i^\vee))^\nonneg\\
&=  \deg(\Mcal_i \otimes \Mcal_i^\vee)^\nonneg + \deg(\Mcal_i \otimes \Rcal_i^\vee)^\nonneg + \deg(\Scal_i \otimes \Mcal_i^\vee)^\nonneg + \deg(\Scal_i \otimes \Rcal_i^\vee)^\nonneg \\
&=  \deg(\Mcal_i \otimes \Mcal_i^\vee)^\nonneg + \deg(\Mcal_i \otimes \Rcal_i^\vee)^\nonneg + \deg(\Scal_i \otimes \Rcal_i^\vee)^\nonneg, \\
\deg(\Ecal_{i+1}^\vee \otimes \Qcal)^\nonneg &= \deg((\Mcal_i \oplus \maxslopered{\Scal}_i) \otimes (\Mcal_i^\vee \oplus \Rcal_i^\vee))^\nonneg\\
&=  \deg(\Mcal_i \otimes \Mcal_i^\vee)^\nonneg + \deg(\Mcal_i \otimes \Rcal_i^\vee)^\nonneg + \deg(\maxslopered{\Scal}_i \otimes \Mcal_i^\vee)^\nonneg + \deg(\maxslopered{\Scal}_i \otimes \Rcal_i^\vee)^\nonneg \\
&=  \deg(\Mcal_i \otimes \Mcal_i^\vee)^\nonneg + \deg(\Mcal_i \otimes \Rcal_i^\vee)^\nonneg + \deg(\maxslopered{\Scal}_i \otimes \Rcal_i^\vee)^\nonneg.
\end{aligned}
\end{equation*}
Therefore by Definition \ref{definition of cEF(Q)} we have
\begin{align*}
c_{\Ecal_i, \Fcal}(\Qcal) - c_{\Ecal_{i+1}, \Fcal}(\Qcal) &= \big(\deg(\Ecal_i^\vee \otimes \Fcal)^\nonneg - \deg(\Ecal_{i+1}^\vee \otimes \Fcal)^\nonneg\big) - \big(\deg(\Ecal_i^\vee \otimes \Qcal)^\nonneg - \deg(\Ecal_{i+1}^\vee \otimes \Qcal)^\nonneg\big)\\
&= \big(\deg(\Scal_i \otimes \Fcal)^\nonneg - \deg(\maxslopered{\Scal}_i \otimes \Fcal)^\nonneg\big) - \big(\deg(\Scal_i \otimes \Rcal_i^\vee)^\nonneg - \deg(\maxslopered{\Scal}_i \otimes \Rcal_i^\vee)^\nonneg \big)
\end{align*}
The desired inequality \eqref{inductive inequality for cEF(Q)} is thus equivalent to 
\begin{equation}\label{decreasing cEF(Q) first reformulation}
\deg(\Scal_i \otimes \Fcal)^\nonneg - \deg(\maxslopered{\Scal}_i \otimes \Fcal)^\nonneg \geq \deg(\Scal_i \otimes \Rcal_i^\vee)^\nonneg - \deg(\maxslopered{\Scal}_i \otimes \Rcal_i^\vee)^\nonneg. 
\end{equation}

Let us set $\lambda := \mumax(\Rcal_i)$ and $r := \rk(\Scal_i^{>\lambda})$. 
Since $\maxslopered{\Scal}_i$ is the maximal slope reduction of $\Scal_i$, we have decompositions 
\begin{equation*}\label{max slope decomps for max slope reduction} 
\Scal_i \simeq \Scal_i^{> \lambda} \oplus \Scal_i^{\leq \lambda} \quad\quad \text{ and } \quad\quad \maxslopered{\Scal}_i \simeq \trivbundle(\lambda)^{\oplus r} \oplus \Scal_i^{\leq \lambda}.
\end{equation*}
Then we have
\begin{align*}
\deg(\Scal_i \otimes \Fcal)^\nonneg &= \deg((\Scal_i^{> \lambda} \oplus \Scal_i^{\leq \lambda}) \otimes \Fcal)^\nonneg\\
&= \deg(\Scal_i^{> \lambda} \otimes \Fcal)^\nonneg + \deg(\Scal_i^{\leq \lambda} \otimes \Fcal)^\nonneg,\\
\deg(\maxslopered{\Scal}_i \otimes \Fcal)^\nonneg &= \deg((\trivbundle(\lambda)^{\oplus r} \oplus \Scal_i^{\leq \lambda}) \otimes \Fcal)^\nonneg\\
&= \deg(\trivbundle(\lambda)^{\oplus r} \otimes \Fcal)^\nonneg + \deg(\Scal_i^{\leq \lambda} \otimes \Fcal)^\nonneg,\\
\deg(\Scal_i \otimes \Rcal_i^\vee)^\nonneg &= \deg((\Scal_i^{> \lambda} \oplus \Scal_i^{\leq \lambda}) \otimes \Rcal_i^\vee)^\nonneg\\
&= \deg(\Scal_i^{> \lambda} \otimes \Rcal_i^\vee)^\nonneg + \deg(\Scal_i^{\leq \lambda} \otimes \Rcal_i^\vee)^\nonneg,\\
\deg(\maxslopered{\Scal}_i \otimes \Rcal_i^\vee)^\nonneg &= \deg((\trivbundle(\lambda)^{\oplus r} \oplus \Scal_i^{\leq \lambda}) \otimes \Rcal_i^\vee)^\nonneg\\
&= \deg(\trivbundle(\lambda)^{\oplus r} \otimes \Rcal_i^\vee)^\nonneg + \deg(\Scal_i^{\leq \lambda} \otimes \Rcal_i^\vee)^\nonneg. 
\end{align*}
We can thus rewrite the inequality \eqref{decreasing cEF(Q) first reformulation} as
\begin{equation}\label{decreasing cEF(Q) second reformulation}
\deg(\Scal_i^{> \lambda} \otimes \Fcal)^\nonneg - \deg(\trivbundle(\lambda)^{\oplus r} \otimes \Fcal)^\nonneg  \geq \deg(\Scal_i^{> \lambda} \otimes \Rcal_i^\vee)^\nonneg - \deg(\trivbundle(\lambda)^{\oplus r} \otimes \Rcal_i^\vee)^\nonneg.
\end{equation}

Let us now take sequences of vectors $(r_a), (s_b)$ and $(f_c)$ which respectively represent the line segments in $\HN(\Rcal_i), \HN(\Scal_i^{> \lambda})$ and $\HN(\Fcal^\vee)$. Let us also set $s:= \sum s_b$ and take $\maxslopered{s}$ to be a vector representing the only line segment in $\HN(\trivbundle(\lambda)^{\oplus r})$. By construction, we obtain
\begin{align*}
s &= (\rk(\Scal_i^{> \lambda}), \deg(\Scal_i^{> \lambda})) = (\rk(\Scal_i^{> \lambda}), \mu(\Scal_i^{> \lambda})\cdot \rk(\Scal_i^{> \lambda})) = (r, r \mu(\Scal_i^{>\lambda})),\\
\maxslopered{s} &= (\rk(\trivbundle(\lambda)^{\oplus r})), \deg(\trivbundle(\lambda)^{\oplus r}) = (\rk(\trivbundle(\lambda)^{\oplus r}), \mu(\trivbundle(\lambda)^{\oplus r}) \cdot \rk(\trivbundle(\lambda)^{\oplus r})) = (r, r \lambda).
\end{align*}
where we use the fact that $\lambda = \mumax(\Rcal_i)$ is an integer by the decomposition \eqref{max common slope decompositions for decreasing cEF(Q)} and the condition \ref{integer slopes of E, F, Q, reduced} in Proposition \ref{reduced key inequality}. We thus have
\begin{equation}\label{difference of vectors representing max slope reduction} 
s - \maxslopered{s} = (0, r (\mu(\Scal_i^{> \lambda}) - \lambda)).
\end{equation}

We now aim to estimate the left side of \eqref{decreasing cEF(Q) second reformulation}. By Lemma \ref{cross product representation of degrees}, we may write
\begin{align}
\deg(\Scal_i^{> \lambda} \otimes \Fcal)^\nonneg - \deg(\trivbundle(\lambda)^{\oplus r} \otimes \Fcal)^\nonneg &= \deg(\Scal_i^{> \lambda} \otimes (\Fcal^\vee)^\vee)^\nonneg - \deg(\trivbundle(\lambda)^{\oplus r} \otimes (\Fcal^\vee)^\vee)^\nonneg \nonumber\\ 
&= \sum_{\mu(f_c) \leq \mu(s_a)} f_c \times s_a - \sum_{\mu(f_c) \leq \lambda} f_c \times \maxslopered{s}. \label{left side of reduced induction inequality}
\end{align}
Note that each $s_a$ satisfies $\mu(s_a) > \lambda$ by construction. Hence each $f_c$ with $\mu(f_c) \leq \lambda$ must satisfy $\mu(f_c) \leq \mu(s_a)$ for all $s_a$'s. We thereby obtain an inequality 
\begin{equation}\label{max slope reduction s, f terms inequality}
\sum_{\mu(f_c) \leq \lambda} f_c \times s_a \leq \sum_{\mu(f_c) \leq \mu(s_a)} f_c \times s_a
\end{equation}
as every term on each side is nonnegative. Now \eqref{left side of reduced induction inequality} yields
\begin{align}
\deg(\Scal_i^{> \lambda} \otimes \Fcal)^\nonneg - \deg(\trivbundle(\lambda)^{\oplus r} \otimes \Fcal)^\nonneg & \geq \sum_{\mu(f_c) \leq \lambda} f_c \times s_a - \sum_{\mu(f_c) \leq \lambda} f_c \times \maxslopered{s} \nonumber\\
&= \sum_{\mu(f_c) \leq \lambda} f_c \times \sum s_a - \sum_{\mu(f_c) \leq \lambda} f_c \times \maxslopered{s} \nonumber\\
&= \sum_{\mu(f_c) \leq \lambda} f_c \times (s - \maxslopered{s}). \label{left side of reduced induction inequality lower bound}
\end{align}
Moreover, as the sequence $(f_c)$ represents the line segments in $\HN(\Fcal^\vee)$ we have
\[ \sum_{\mu(f_c) \leq \lambda} f_c = (\rk((\Fcal^\vee)^{\leq \lambda}), \deg((\Fcal^\vee)^{\leq \lambda})),\]
and consequently obtain
\[ \sum_{\mu(f_c) \leq \lambda} f_c \times (s - \maxslopered{s}) = r \cdot \rk((\Fcal^\vee)^{\leq \lambda}) \cdot  (\mu(\Scal_i^{> \lambda}) - \lambda)\]
by \eqref{difference of vectors representing max slope reduction}. We can thus rewrite \eqref{left side of reduced induction inequality lower bound} as
\begin{equation}\label{reduced inequality left side simplified lower bound}
\deg(\Scal_i^{> \lambda} \otimes \Fcal)^\nonneg - \deg(\trivbundle(\lambda)^{\oplus r} \otimes \Fcal)^\nonneg \geq r \cdot \rk((\Fcal^\vee)^{\leq \lambda}) \cdot  (\mu(\Scal_i^{> \lambda}) - \lambda). 
\end{equation}

Our next task is to compute the right side of \eqref{decreasing cEF(Q) second reformulation}. By Lemma \ref{cross product representation of degrees} we have
\begin{equation}\label{right side of reduced induction inequality}
\deg(\Scal_i^{> \lambda} \otimes \Rcal_i^\vee)^\nonneg - \deg(\trivbundle(\lambda)^{\oplus r} \otimes \Rcal_i^\vee)^\nonneg = \sum_{\mu(r_b) \leq \mu(s_a)} r_b \times s_a - \sum_{\mu(r_b) \leq \lambda} r_b \times \maxslopered{s}. 
\end{equation}
Since the sequences $(s_a)$ and $(r_b)$ respectively represent the line segments in $\HN(\Scal_i^{> \lambda})$ and $\HN(\Rcal_i)$, we have
\[\mu(r_b) \leq \mumax(\Rcal_i) = \lambda < \mu(s_a)\]
for all $s_a$'s and $r_b$'s. Hence we can simplify \eqref{right side of reduced induction inequality} as
\begin{align}
\deg(\Scal_i^{> \lambda} \otimes \Rcal_i^\vee)^\nonneg - \deg(\trivbundle(\lambda)^{\oplus r} \otimes \Rcal_i^\vee)^\nonneg &= \sum r_b \times s_a - \sum r_b \times \maxslopered{s} \nonumber\\
&= \sum r_b \times \sum s_a - \sum r_b \times \maxslopered{s} \nonumber\\
&= \sum r_b \times (s - \maxslopered{s}). \label{right side of reduced induction inequality simplified}
\end{align}
Moreover, by construction we have
\[ \sum r_b = (\rk(\Rcal_i), \deg(\Rcal_i)),\]
and consequently obtain
\[ \sum r_b \times (s - \maxslopered{s}) = r \cdot \rk(\Rcal_i) \cdot  (\mu(\Scal_i^{> \lambda}) - \lambda)\]
by \eqref{difference of vectors representing max slope reduction}. We can thus rewrite \eqref{right side of reduced induction inequality simplified} as
\begin{equation}\label{right side of reduced induction inequality fully simplified}
\deg(\Scal_i^{> \lambda} \otimes \Rcal_i^\vee)^\nonneg - \deg(\trivbundle(\lambda)^{\oplus r} \otimes \Rcal_i^\vee)^\nonneg =  r \cdot \rk(\Rcal_i) \cdot  (\mu(\Scal_i^{> \lambda}) - \lambda). 
\end{equation}

Since $\lambda = \mumax(\Rcal_i)$, we use \eqref{max common slope decompositions for decreasing cEF(Q)} to find
\[ \rk(\Rcal_i) = \rk(\Rcal_i^{\leq \lambda}) \leq \rk((\Qcal^\vee)^{\leq \lambda}).\]
In addition, by Lemma \ref{rank and degree of dual bundle}, Proposition \ref{equivalence of two characterizations for subbundles} and the condition \ref{slopewise dominance of F on Q, reduced} in Proposition \ref{reduced key inequality} we find
\[\rk((\Qcal^\vee)^{\leq \lambda}) = \rk(\Qcal^{\geq -\lambda}) \leq \rk(\Fcal^{\geq -\lambda}) = \rk((\Fcal^\vee)^{\leq \lambda}).\]
Hence we have
\begin{equation*}
\rk(\Rcal_i) \leq \rk((\Fcal^\vee)^{\leq \lambda}).
\end{equation*}
Furtheremore, since $r = \rk(\Scal_i^{> \lambda}) >0$ by \eqref{order of slopes in max common factor decomps} and $\mu(\Scal_i^{>\lambda}) - \lambda >0$ by definition, we obtain
\begin{equation}\label{comparison of two estimates in reduced induction inequality}
r \cdot \rk((\Fcal^\vee)^{\leq \lambda}) \cdot  (\mu(\Scal_i^{> \lambda}) - \lambda) \geq r\cdot \rk(\Rcal_i) \cdot  (\mu(\Scal_i^{> \lambda}) - \lambda).
\end{equation}
Combining this with \eqref{reduced inequality left side simplified lower bound} and \eqref{right side of reduced induction inequality fully simplified}, we deduce the inequality \eqref{decreasing cEF(Q) second reformulation} which is equivalent to the desired inequality \eqref{decreasing cEF(Q) first reformulation}.

Let us now consider the equality condition. From \eqref{comparison of two estimates in reduced induction inequality}, we need 
\begin{equation}\label{induction inequality first equality condition} 
\rk(\Rcal_i) = \rk((\Fcal^\vee)^{\leq \lambda})
\end{equation}
since both $r$ and $\mu(\Scal_i^{\lambda}) - \lambda$ are positive as already noted. We also need equality in \eqref{reduced inequality left side simplified lower bound}, which requires equality in \eqref{max slope reduction s, f terms inequality}. Since every term on each side of \eqref{max slope reduction s, f terms inequality} is nonnegative, we must have identical nonzero terms on both sides of \eqref{max slope reduction s, f terms inequality}. In particular, every $f_c$ with $\mu(f_c) < \mumax(\Scal_i^{>\lambda})$ must satisfy $\mu(f_c) \leq \lambda$; indeed, for such an $f_c$ we have a nonzero term $f_c \times s_a$ on the right side for some $s_a$ with $\mu(s_a) = \mumax(\Scal_i^{>\lambda})$, and therefore must have the same nonzero term on the left side. We thus obtain
\begin{equation}\label{induction inequality second equality condition}
\rk((\Fcal^\vee)^{\leq \lambda}) =  \rk((\Fcal^\vee)^{<\mumax(\Scal_i^{> \lambda})}) = \rk((\Fcal^\vee)^{<\mumax(\Scal_i)}) 
\end{equation}
where for the second equality we observe $\mumax(\Scal_i^{> \lambda}) = \mumax(\Scal_i)$ by \eqref{order of slopes in max common factor decomps}. Moreover, by Lemma \ref{rank, degree and dual of stable bundles} and Lemma \ref{rank and degree of dual bundle} we have
\begin{equation}\label{rewriting of equality condition for induction inequality F part}
\rk((\Fcal^\vee)^{< \mumax(\Scal_i)}) = \rk((\Fcal^\vee)^{< - \mumin(\Scal_i^\vee)}) = \rk(\Fcal^{> \mumin(\Scal_i^\vee)}).
\end{equation}
We also have
\begin{equation}\label{rewriting of equality condition for induction inequality R part}
\rk(\Rcal_i) = \rk(\Scal_i) = \rk(\Scal_i^\vee)
\end{equation}
by \eqref{max common slope decompositions for decreasing cEF(Q)}, the statement \ref{rank of Ei} in Proposition \ref{degenerating sequence inductive part} and Lemma \ref{rank and degree of dual bundle}. We then combine \eqref{induction inequality first equality condition}, \eqref{induction inequality second equality condition}, \eqref{rewriting of equality condition for induction inequality F part} and \eqref{rewriting of equality condition for induction inequality R part} to obtain an equality condition
\[ \rk(\Scal_i^\vee) = \rk(\Fcal^{> \mumin(\Scal_i^\vee)})\]
as desired. 
\end{proof}

\begin{prop}\label{strict decreasing cEF(Q) in first two steps}
Let $\Ecal_1, \Ecal_2, \cdots$ be as in Proposition \ref{degenerating sequence inductive part}. Then we have a strict inequality
\[c_{\Ecal, \Fcal}(\Qcal) > c_{\Ecal_2, \Fcal}(\Qcal).\]
\end{prop}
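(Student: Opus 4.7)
The plan is to argue by contradiction: suppose $c_{\Ecal, \Fcal}(\Qcal) = c_{\Ecal_2, \Fcal}(\Qcal)$, and decompose
\[c_{\Ecal, \Fcal}(\Qcal) - c_{\Ecal_2, \Fcal}(\Qcal) = \big(c_{\Ecal, \Fcal}(\Qcal) - c_{\Ecal_1, \Fcal}(\Qcal)\big) + \big(c_{\Ecal_1, \Fcal}(\Qcal) - c_{\Ecal_2, \Fcal}(\Qcal)\big)\]
as a sum of two nonnegative terms, the first by the statement \ref{decreasing c for E1} in Proposition \ref{construction of E1} and the second by Proposition \ref{decreasing cEF(Q) for degenerating sequence} applied with $i=1$. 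The assumed equality forces both terms to vanish, so by the respective equality conditions we must simultaneously have $\deg(\Fcal)^\nonneg = \deg(\Qcal)^\nonneg$ together with either (a) $\Ecal_1 \simeq \Qcal$ or (b) $\rk(\Scal_1^\vee) = \rk(\Fcal^{>\mumin(\Scal_1^\vee)})$. The plan is to derive a contradiction in each case.

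In case (a) we have $\Ecal \simeq \Qcal \oplus \trivbundle$, so $\rk(\Ecal^{\geq 0}) = \rk(\Qcal^{\geq 0}) + 1 \geq \rk(\Qcal^{\geq 1}) + 1$. Since $\mumax(\Ecal) = 0$ and $\Fcal$ has no slope $0$ by the no-common-slopes hypothesis, slopewise dominance of $\Ecal$ by $\Fcal$ together with integer slopes force each of the first $\rk(\Ecal^{\geq 0})$ slopes of $\Fcal$ to be $\geq 1$; in particular $\rk(\Fcal^{\geq 1}) \geq \rk(\Qcal^{\geq 1}) + 1$. Comparing $\deg(\Fcal)^\nonneg = \deg(\Fcal^{\geq 1})$ with $\deg(\Qcal)^\nonneg = \deg(\Qcal^{\geq 1})$ index by index, using slopewise dominance of $\Qcal$ by $\Fcal$ on the first $\rk(\Qcal^{\geq 1})$ indices and the bound $\geq 1$ on each of the at least one extra segment, then yields $\deg(\Fcal)^\nonneg \geq \deg(\Qcal)^\nonneg + 1$, contradicting the first equality.

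In case (b), $\Ecal_1 \not\simeq \Qcal$ forces $\Scal_1 \neq 0$ by Lemma \ref{simple observations about Ei}, so $\alpha := \mumax(\Scal_1)$ is well-defined and satisfies $\alpha \geq 0$ because $\mumin(\Ecal_1^\vee) = -\mumax(\Ecal_1) \geq 0$. Dualizing the decomposition $\Ecal_1^\vee \simeq \Mcal_1 \oplus \Scal_1$ presents $\Scal_1^\vee$ as a direct summand of $\Ecal_1$ with all slopes $\geq -\alpha$, giving $\rk(\Ecal_1^{\geq -\alpha}) \geq \rk(\Scal_1^\vee)$; the slope-$0$ summand $\trivbundle$ of $\Ecal$ supplies one further segment with slope $\geq -\alpha$ (as $0 \geq -\alpha$), so $\rk(\Ecal^{\geq -\alpha}) \geq \rk(\Scal_1^\vee) + 1$, and the rank reformulation of slopewise dominance in Proposition \ref{equivalence of two characterizations for subbundles} upgrades this to $\rk(\Fcal^{\geq -\alpha}) \geq \rk(\Scal_1^\vee) + 1$. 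The crucial observation is that $-\alpha = \mumin(\Scal_1^\vee)$ actually occurs as a slope in $\Scal_1^\vee \subseteq \Ecal_1 \subseteq \Ecal$, so by the no-common-slopes hypothesis $\Fcal$ has no slope $-\alpha$; hence $\rk(\Fcal^{>-\alpha}) = \rk(\Fcal^{\geq -\alpha}) \geq \rk(\Scal_1^\vee) + 1 > \rk(\Scal_1^\vee)$, contradicting the equality condition of case (b). The main obstacle is precisely this last step: it is the case hypothesis $\Scal_1 \neq 0$ that forces $-\alpha$ to be a slope of $\Ecal$, which is what allows the no-common-slopes condition to close the gap between $\rk(\Fcal^{\geq -\alpha})$ and $\rk(\Fcal^{>-\alpha})$ and yield the needed strict inequality.
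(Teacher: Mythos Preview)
Your proof is correct and follows essentially the same approach as the paper: argue by contradiction, split $c_{\Ecal,\Fcal}(\Qcal)-c_{\Ecal_2,\Fcal}(\Qcal)$ into the two nonnegative pieces coming from Proposition~\ref{construction of E1}\ref{decreasing c for E1} and Proposition~\ref{decreasing cEF(Q) for degenerating sequence}, and rule out the simultaneous equality conditions case by case. Your case~(b) is virtually identical to the paper's treatment of $\Ecal_1\not\simeq\Qcal$ (you just write $-\alpha$ for $\mumin(\Scal_1^\vee)$); for case~(a) the paper takes a shorter route, observing directly that $\mumax(\Ecal_1)\leq 0$ forces $\deg(\Qcal)^{\geq 0}=0$ and hence $\deg(\Fcal)^{\geq 0}=0$, contradicting $\mumax(\Fcal)>0$, whereas you establish the sharper $\deg(\Fcal)^{\geq 0}\geq\deg(\Qcal)^{\geq 0}+1$ via an index-by-index slope comparison --- correct, but more work than needed.
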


\begin{proof}
Proposition \ref{construction of E1} and Proposition \ref{decreasing cEF(Q) for degenerating sequence} together yield
\begin{equation}\label{cEF(Q) inequality for first two steps} 
c_{\Ecal, \Fcal}(\Qcal) \geq c_{\Ecal_1, \Fcal}(\Qcal) \geq c_{\Ecal_2, \Fcal}(\Qcal).
\end{equation}
We need to prove that at least one of the inequalities in \eqref{cEF(Q) inequality for first two steps} must be strict. We assume for contradiction that
\begin{equation}\label{hypothetical equalities for first two steps} 
c_{\Ecal, \Fcal}(\Qcal) = c_{\Ecal_1, \Fcal}(\Qcal) = c_{\Ecal_2, \Fcal}(\Qcal).
\end{equation}

Let us first consider the case $\Ecal_1 \simeq \Qcal$. We note that
\begin{equation}\label{first step degree of Q}
\deg(\Qcal)^\nonneg = \deg(\Ecal_1)^\nonneg = 0
\end{equation}
where the second equality follows from the statement \ref{max slope of E1} in Proposition \ref{construction of E1}. In addition, the first equality in \eqref{hypothetical equalities for first two steps} yields
\begin{equation}\label{first step equality condition}
\deg(\Fcal)^\nonneg = \deg(\Qcal)^\nonneg.
\end{equation}
by the statement \ref{decreasing c for E1} in Proposition \ref{construction of E1}. Now \eqref{first step degree of Q} and \eqref{first step equality condition} together yield $\deg(\Fcal)^\nonneg = 0$, which in particular implies $\mumax(\Fcal) \leq 0$. However, this is impossible because of the conditions \ref{slopewise dominance of F on E, reduced}, \ref{no common slopes for E and F, reduced} and \ref{zero max slope for E, reduced} in Proposition \ref{reduced key inequality}. We have thus obtained a desired contradiction. 

Now it remains to consider the case $\Ecal_1 \not\simeq \Qcal$. Let us take decompositions 
\[ \Qcal^\vee \simeq \Mcal_1 \oplus \Rcal_1 \quad\quad \text{ and } \quad\quad \Ecal_1^\vee \simeq \Mcal_1 \oplus \Scal_1\]
as given by the statement \ref{max common factor decomp for duals of Q and Ei} in Proposition \ref{degenerating sequence inductive part}. As $\Ecal = \Ecal_1 \oplus \trivbundle$ by Proposition \ref{construction of E1}, we obtain
\begin{equation}\label{decomp of E first step}
\Ecal \simeq \Mcal_1^\vee \oplus \Scal_1^\vee \oplus \trivbundle.
\end{equation}
Since $\Scal_1^\vee$ is a direct summand of $\Ecal$, we have
\[\mumin(\Scal_1^\vee) \leq \mumax(\Ecal) = 0\]
by the condition \ref{zero max slope for E, reduced} in Proposition \ref{reduced key inequality}. Hence \eqref{decomp of E first step} yields
\begin{equation}\label{rank bound of E first step}
\rk(\Ecal^{\geq \mumin(\Scal_1^\vee)}) \geq \rk(\Scal_1^\vee \oplus \trivbundle) > \rk(\Scal_1^\vee)
\end{equation}
Moreover, as $\Ecal_1 \not\simeq \Qcal$, Proposition \ref{decreasing cEF(Q) for degenerating sequence} and the second equality in \eqref{hypothetical equalities for first two steps} together imply
\begin{equation}\label{second step equality condition}
\rk(\Scal_1^\vee) = \rk(\Fcal^{> \mumin(\Scal_1^\vee)}).
\end{equation}
Since $\mumin(\Scal_1^\vee)$ is a slope of $\Ecal$ by \eqref{decomp of E first step}, it is not a slope of $\Fcal$ by the condition \ref{no common slopes for E and F, reduced} in Proposition \ref{reduced key inequality}. Hence we have
\begin{equation}\label{F does not have the min slope of S1}
\rk(\Fcal^{> \mumin(\Scal_1^\vee)}) = \rk(\Fcal^{\geq \mumin(\Scal_1^\vee)}).
\end{equation}
Now we combine \eqref{rank bound of E first step}, \eqref{second step equality condition} and \eqref{F does not have the min slope of S1} to obtain
\[ \rk(\Ecal^{\geq \mumin(\Scal_1^\vee)}) > \rk(\Fcal^{\geq \mumin(\Scal_1^\vee)}).\]
However, this is impossible because of the condition \ref{slopewise dominance of F on E} in Proposition \ref{reduced key inequality} and Proposition \ref{equivalence of two characterizations for subbundles}. We thus complete the proof by contradiction. 
\end{proof}

Since $c_{\Qcal, \Fcal}(\Qcal) = 0$ by Definition \ref{definition of cEF(Q)}, we deduce Proposition \ref{reduced key inequality} from Proposition \ref{degenerating sequence finiteness}, Proposition \ref{decreasing cEF(Q) for degenerating sequence} and Proposition \ref{strict decreasing cEF(Q) in first two steps}. This concludes our proof of Theorem \ref{classification of subbundles}.

\bibliographystyle{amsalpha}

\bibliography{Bibliography}
	
\end{document}